\documentclass[12pt,reqno]{amsart}
\usepackage{amsmath, amsthm, amssymb}

\topmargin 1cm
\advance \topmargin by -\headheight
\advance \topmargin by -\headsep
     
\setlength{\paperheight}{270mm}%
\setlength{\paperwidth}{192mm}%
\textheight 22.5cm
\oddsidemargin 1cm
\evensidemargin \oddsidemargin
\marginparwidth 1.25cm
\textwidth 14cm
\setlength{\parskip}{0.05cm}

\newtheorem{theorem}{Theorem}[section]
\newtheorem{lemma}[theorem]{Lemma}
\newtheorem{corollary}[theorem]{Corollary}

\theoremstyle{definition}

\theoremstyle{remark}

\numberwithin{equation}{section}

\newcommand{\mmod}[1]{\,\,(\text{mod}\,\,#1)}

\def\bfm{{\mathbf m}}

\def\bfu{{\mathbf u}}
\def\bfv{{\mathbf v}}
\def\bfw{{\mathbf w}}
\def\bfx{{\mathbf x}}
\def\bfy{{\mathbf y}}
\def\bfz{{\mathbf z}}

\def\calB{{\mathcal B}} 
\def\calC{{\mathcal C}}

\def\calJ{{\mathcal J}}

 \def\Ktil{{\widetilde K}}

\def\calR{{\mathcal R}}

\def\Gtil{\widetilde G}\def\Itil{\widetilde I}\def\Ktil{\widetilde K}

\def\dbC{{\mathbb C}}\def\dbN{{\mathbb N}}
\def\dbR{{\mathbb R}}
\def\dbZ{{\mathbb Z}}

\def\grC{{\mathfrak C}}
\def\grd{{\mathfrak d}}
\def\grf{{\mathfrak f}}\def\grF{{\mathfrak F}}
\def\grG{{\mathfrak G}}
\def\grH{{\mathfrak H}}

\def\grm{{\mathfrak m}}\def\grM{{\mathfrak M}}
\def\grS{{\mathfrak S}}
\def\grC{{\mathfrak C}}

\def\grw{{\mathfrak w}}

\def\alp{{\alpha}} \def\bfalp{{\boldsymbol \alpha}}
\def\bet{{\beta}}  \def\bfbet{{\boldsymbol \beta}}
\def\gam{{\gamma}} \def\Gam{{\Gamma}}
\def\del{{\delta}} \def\Del{{\Delta}}
\def\zet{{\zeta}} \def\bfzet{{\boldsymbol \zeta}} 
 
\def\tet{{\theta}} \def\bftet{{\boldsymbol \theta}} 
\def\kap{{\kappa}}
\def\lam{{\lambda}} \def\Lam{{\Lambda}} 

\def\bfxi{{\boldsymbol \xi}}

\def\sig{{\sigma}} \def\Sig{{\Sigma}}

\def\ome{{\omega}} \def\Ome{{\Omega}}
\def\d{{\partial}}
\def\eps{\varepsilon}

\def\le{\leqslant} \def\ge{\geqslant}

\def\d{{\,{\rm d}}}

\def\llbracket{\lbrack\;\!\!\lbrack} \def\rrbracket{\rbrack\;\!\!\rbrack}
\newcommand{\rrbracketsub}[1]{\rrbracket_{\textstyle{_#1}}}

\begin{document}
\title[Vinogradov's mean value theorem]{Multigrade efficient congruencing and Vinogradov's mean value 
theorem}
\author[Trevor D. Wooley]{Trevor D. Wooley}
\address{School of Mathematics, University of Bristol, University Walk, Clifton, Bristol BS8 1TW, United 
Kingdom}
\email{matdw@bristol.ac.uk}
\subjclass[2010]{11L15, 11L07, 11P05, 11P55}
\keywords{Exponential sums, Waring's problem, Hardy-Littlewood method}
\date{}
\begin{abstract} We develop a substantial enhancement of the efficient congruencing method to estimate 
Vinogradov's integral of degree $k$ for moments of order $2s$, thereby obtaining for the first time 
near-optimal estimates for $s>\tfrac{5}{8}k^2$. There are numerous applications. In 
particular, when $k$ is large, the anticipated asymptotic formula in Waring's problem is established for 
sums of $s$ $k$th powers of natural numbers whenever $s\ge 1.543k^2$.\end{abstract}
\maketitle

\section{Introduction} An optimal upper bound in Vinogradov's mean value theorem is now
 known to hold with a number of variables only twice that conjectured to be best possible (see 
\cite[Theorem 1.1]{Woo2012a}). Previous to this very recent advance based on ``efficient congruencing'',
 available technology required that the number of variables be larger by a factor of order $\log k$, for a
 system of degree $k$, a limitation common to all mean value estimates for exponential sums. Hints that
 the conjectured bounds might be proved in full can be glimpsed in a speculative hypothetical
 enhancement to efficient congruencing proposed and heuristically analysed in \cite[\S11]{Woo2012a}.
 Our goal in this paper is to realise an approximation to this enhancement, thereby delivering much of
 what this earlier speculation had promised. In particular, we now come close to establishing an optimal
 upper bound with a number of variables only twenty-five per cent larger than that conjectured to be best
 possible. The central role played by Vinogradov's mean value theorem ensures that applications of our
 new estimates are plentiful.\par

In order to describe our conclusions, we must introduce some notation. When $k$ and $s$ are natural 
numbers, denote by $J_{s,k}(X)$ the number of integral solutions of the Diophantine system
\begin{equation}\label{1.1}
x_1^j+\ldots +x_s^j=y_1^j+\ldots +y_s^j\quad (1\le j\le k),
\end{equation}
with $1\le x_i,y_i\le X$ $(1\le i\le s)$. The {\it main conjecture} in Vinogradov's mean value theorem
 asserts that for each $\eps>0$, one has
\begin{equation}\label{1.2}
J_{s,k}(X)\ll X^\eps (X^s+X^{2s-\frac{1}{2}k(k+1)}).
\end{equation}
Here and throughout this paper, the implicit constants associated with Vinogradov's notation $\ll $ and
 $\gg $ may depend on $s$, $k$ and $\eps$. This conjecture is motivated by the corresponding lower
 bound
\begin{equation}\label{1.3}
J_{s,k}(X)\gg X^s+X^{2s-\frac{1}{2}k(k+1)},
\end{equation}
that arises by considering the diagonal solutions of (\ref{1.1}) with $\bfx=\bfy$, together with a lower
 bound for the product of local densities (see \cite[equation (7.5)]{Vau1997}).\par

We complete the proof of our new estimate for $J_{s,k}(X)$ in \S9. 

\begin{theorem}\label{theorem1.1} Suppose that $k$, $r$ and $s$ are natural numbers with $k\ge 3$,
$$r\le \min \{ k-2,\tfrac{1}{2}(k+1)\}\quad \text{and}\quad s\ge k^2-rk+\tfrac{1}{2}r(r+3)-1.$$
Then for each $\eps>0$, one has
\begin{equation}\label{1.4}
J_{s,k}(X)\ll X^{2s-\frac{1}{2}k(k+1)+\del_s+\eps},
\end{equation}
where $\del_s=\del_{s,k,r}$ is defined by
$$\del_{s,k,r}=\frac{r(r-1)(3k-2r-5)}{6(s-k+1)}.$$
In particular, when $s\ge (k-\tfrac{1}{2}r)^2+\tfrac{1}{4}(r+3)^2$, one has $\del_{s,k,r}<r^2/k$.
\end{theorem}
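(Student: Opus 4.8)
The plan is to prove \eqref{1.4} by a multigrade refinement of the efficient congruencing method, and then to deduce the concluding inequality for $\del_{s,k,r}$ by an elementary estimate. I begin with the usual analytic framework. Putting $e(z)=e^{2\pi iz}$ and
\[
f(\bfalp;X)=\sum_{1\le x\le X}e(\alpha_1x+\alpha_2x^2+\cdots+\alpha_kx^k),
\]
orthogonality gives $J_{s,k}(X)=\int_{[0,1)^k}\abs{f(\bfalp;X)}^{2s}\d\bfalp$. One fixes a prime $p$ of an appropriate size (roughly $X^{1/k}$), and for a natural number $c$ and a residue class $\xi$ introduces the restricted exponential sums $f_c(\bfalp;\xi)=\sum_{x\equiv\xi\,(p^c)}e(\cdots)$ over $1\le x\le X$. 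The structural input is the \emph{translation--dilation invariance} of the system \eqref{1.1} --- it is preserved under $x_i\mapsto\lam x_i+c$ --- which allows one to normalise blocks of variables into prescribed residue classes modulo powers of $p$. Equipped with this, one works with auxiliary mean values $\llbracket K_n\rrbracket$ in which, at stage $n$ of an iteration, a block of $k$ among the $2s$ variables is constrained to lie in pairwise distinct residue classes modulo a large power of $p$ (a ``well-conditioned'' configuration); bounding these quantities will be enough to bound $J_{s,k}(X)$.

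Two moves are played off against each other. The first is \emph{conditioning}: by H\"older's inequality combined with available estimates for $J_{s',k}$ with $s'<s$ (and the classical bounds for small $s'$), one shows that configurations in which two of the distinguished variables collide modulo the relevant power of $p$ contribute negligibly, so a Vinogradov integral may be traded for its well-conditioned counterpart at the cost of a bounded number of H\"older factors and a power of $X^\eps$. It is the demand for enough ``spare'' variables to absorb these H\"older exponents --- and to estimate the residual factors trivially --- that produces the lower bound $s\ge k^2-rk+\tfrac{1}{2}r(r+3)-1$. The second move is \emph{congruencing}: if $x_1,\dots,x_k$ lie in pairwise distinct residue classes modulo $p^a$ and, together with further variables, solve \eqref{1.1}, then a Vandermonde/Newton's-identities computation over $\dbZ/p^{ja}\dbZ$ pins the companion variables down modulo $p^{ja}$ for each $j$ with $1\le j\le k$, and in particular modulo $p^{ka}$. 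Basic efficient congruencing extracts this arithmetic information at a single scale per step; the \emph{multigrade} enhancement instead harnesses all of the intermediate scales $j=1,\dots,k-1$ simultaneously, splicing the resulting mean values together through one H\"older inequality carrying many exponents. Arranging this so that the error terms generated at the several grades remain mutually compatible, and so that the spliced recursion still closes with a constant strong enough to reach the quadratic threshold $s>\tfrac{5}{8}k^2$, is the crux of the matter and the step I expect to be the main obstacle; it is the rigorous incarnation of the heuristic enhancement sketched in \cite[\S11]{Woo2012a}.

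Feeding conditioning and congruencing into one another turns the passage from stage $n$ to stage $n+1$ into a recursive inequality that, schematically, has the shape $\llbracket K_n\rrbracket\ll X^{c_n+\eps}\llbracket K_{n+1}\rrbracket^{\rho}$ with $0<\rho<1$ and explicit exponents $c_n$ depending on $k$, $s$, $r$ and the grade data; this is in turn packaged into an improvement of the permissible exponent $\Lam$ in $J_{s,k}(X)\ll X^{\Lam+\eps}$. Iterating $N$ times, estimating the terminal mean value trivially, and letting $N\to\infty$, the series $\sum_{n}c_n\rho^n$ converges, and optimising the free parameters --- the number of grades, the inter-scale ratios, and the choice of $r$ within the range $r\le\min\{k-2,\tfrac{1}{2}(k+1)\}$, subject to every intermediate exponent remaining admissible --- returns exactly the value $2s-\tfrac{1}{2}k(k+1)+\del_{s,k,r}$ with $\del_{s,k,r}=\dfrac{r(r-1)(3k-2r-5)}{6(s-k+1)}$. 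This assembly is what is carried out in \S9, and completes the proof of \eqref{1.4}.

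For the final assertion it remains only to verify that $s\ge(k-\tfrac{1}{2}r)^2+\tfrac{1}{4}(r+3)^2$ forces $\del_{s,k,r}<r^2/k$. Writing $D=s-k+1>0$ and using $r\ge 1$, this inequality is equivalent to $k(r-1)(3k-2r-5)<6rD$, while the hypothesis on $s$ gives $D\ge k^2-kr+\tfrac{1}{2}r^2+\tfrac{3}{2}r-k+\tfrac{13}{4}$. Substituting this lower bound and collecting terms, $6rD-k(r-1)(3k-2r-5)$ is at least
\[
3(r+1)k^2-(4r^2+3r+5)k+r\bigl(3r^2+9r+\tfrac{39}{2}\bigr),
\]
a quadratic in $k$ with positive leading coefficient whose discriminant equals $-20r^4-120r^3-293r^2-204r+25$, which is negative for all $r\ge 1$. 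Hence this quantity is strictly positive, so $\del_{s,k,r}<r^2/k$, as claimed.
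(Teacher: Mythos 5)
Your outline tracks the paper's own strategy (multigrade efficient congruencing, as carried out in \S\S2--9), and your closing computation is correct: I have checked that with $D=s-k+1$ the hypothesis $s\ge(k-\tfrac12 r)^2+\tfrac14(r+3)^2$ gives $6rD-k(r-1)(3k-2r-5)\ge 3(r+1)k^2-(4r^2+3r+5)k+r(3r^2+9r+\tfrac{39}{2})$, whose discriminant $-20r^4-120r^3-293r^2-204r+25$ is indeed negative for $r\ge1$, so the final assertion of the theorem is properly established. The problem is everything in between. The entire quantitative content of the theorem --- the emergence of the exponent $\del_{s,k,r}=r(r-1)(3k-2r-5)/(6(s-k+1))$ and the admissibility threshold for $s$ --- is asserted rather than proved: you write that the spliced recursion ``returns exactly the value'' claimed and yourself flag the multigrade splicing as ``the step I expect to be the main obstacle.'' That obstacle is the theorem. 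What is missing, concretely, is (i) the counting of solutions of the auxiliary congruence systems (Lemma \ref{lemma3.1}, resting on the lifting estimates of \cite{FW2013}), which supplies the factors $p^{\mu b+\nu a}$ and $p^{(r-j)a}$ governing each grade; (ii) the two congruencing steps (Lemmata \ref{lemma6.1} and \ref{lemma6.2}) that peel off variables at moduli $p^{(k-r)b},p^{(k-r+1)b},\ldots,p^{kb}$ one grade at a time; (iii) the multigrade combination of \S7, where the H\"older weights $\phi_j=(s-k+1)/(s_{j-1}s_j)$ and the identity $\sum_{m=1}^r(m-1)(k-m-1)=\tfrac16 r(r-1)(3k-2r-5)$ first produce the exponent $\Del$; and (iv) the device (Lemma \ref{lemma8.1}) converting the $(r+1)$-fold branching into a monograde iteration, without which the bookkeeping of $(r+1)^N$ mean values after $N$ steps is unmanageable. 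None of these can be supplied by ``optimising free parameters''; the specific shape of $\del_{s,k,r}$ is forced by the telescoping identities in Lemmata \ref{lemma7.2}--\ref{lemma7.4}.

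Two further points in the sketch would derail an attempt to execute it. First, the prime is not of size roughly $X^{1/k}$ (that is the classical Linnik--Karatsuba normalisation); in efficient congruencing one takes $p\asymp M=X^\tet$ with $\tet=(16(s+k))^{-N-1}$ minuscule, and the gain comes from iterating $N$ times and letting $N\to\infty$ --- with $p\asymp X^{1/k}$ the concentration argument of \S9 has no room to run. Second, the multigrade step does not use ``all of the intermediate scales $j=1,\ldots,k-1$ simultaneously'': it uses only the $r+1$ scales $k-r,k-r+1,\ldots,k$ (all at least $k-r$), and the restriction $r\le\tfrac12(k+1)$ in the hypotheses exists precisely because the congruence-lifting estimates of Lemma \ref{lemma3.1} require the lowest grade to be at least about $\tfrac12(k-1)$. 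So the proposal is a faithful road map plus a correct elementary endgame, but the proof itself is not there.
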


We refer the reader to Theorem \ref{theorem9.2} for an alternative bound for $J_{s,k}(X)$ which 
is in general slightly more precise than that given by the theorem just announced. Theorem
 \ref{theorem1.1} has the merit of being simpler to state, and also offers slightly sharper bounds in
 situations where $s$ is close to $k^2$. The special case $r=1$ of Theorem \ref{theorem1.1} yields a
 corollary achieving the upper bound (\ref{1.2}) asserted by the main conjecture.

\begin{corollary}\label{corollary1.2}
Suppose that $s$ and $k$ are natural numbers with $k\ge 3$ and $s\ge k^2-k+1$. Then for each
 $\eps>0$, one has $J_{s,k}(X)\ll X^{2s-\frac{1}{2}k(k+1)+\eps}$.
\end{corollary}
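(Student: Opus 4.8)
The plan is to deduce Corollary~\ref{corollary1.2} directly from Theorem~\ref{theorem1.1} by specialising to $r=1$. First I would check that $r=1$ satisfies the hypotheses of the theorem: since $k\ge 3$ one has $1\le k-2$ and $1\le\tfrac12(k+1)$, so the constraint $r\le\min\{k-2,\tfrac12(k+1)\}$ holds. Next, with $r=1$ the lower bound on $s$ required by the theorem becomes
$$s\ge k^2-k+\tfrac12\cdot 1\cdot 4-1=k^2-k+1,$$
which is precisely the hypothesis imposed in the corollary. Hence Theorem~\ref{theorem1.1} applies and yields $J_{s,k}(X)\ll X^{2s-\frac12 k(k+1)+\del_s+\eps}$.

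The remaining point is to observe that $\del_s$ vanishes when $r=1$. Indeed, the defining formula gives
$$\del_{s,k,1}=\frac{1\cdot(1-1)\cdot(3k-2-5)}{6(s-k+1)}=0,$$
because of the factor $r-1=0$ in the numerator. (One should note in passing that the denominator $6(s-k+1)$ is positive under the hypothesis $s\ge k^2-k+1\ge k$, so the expression is well defined; this matters since $\del_s$ appears as an exponent.) Substituting $\del_s=0$ into \eqref{1.4} immediately gives $J_{s,k}(X)\ll X^{2s-\frac12 k(k+1)+\eps}$ for every $\eps>0$, which is the desired conclusion.

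There is essentially no obstacle here: the corollary is a transparent special case of the theorem, and the only things to verify are the two inequalities defining the admissible range of $r$ and $s$, together with the arithmetic identity $\del_{s,k,1}=0$. The substantive content lies entirely in Theorem~\ref{theorem1.1}, whose proof is completed in~\S9; the corollary merely records the striking fact that at $r=1$ the error exponent $\del_s$ is not just small but identically zero, so that one recovers the full strength of the main conjecture~\eqref{1.2} (in the range where the second term $X^{2s-\frac12 k(k+1)}$ dominates) as soon as $s\ge k^2-k+1$.
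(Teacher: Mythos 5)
Your proposal is correct and is exactly the paper's own deduction: the corollary is stated in the paper as "the special case $r=1$ of Theorem \ref{theorem1.1}", and your verification of the hypotheses and of the identity $\del_{s,k,1}=0$ (via the factor $r-1$) is precisely what is required.
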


This corollary improves on our earlier conclusion \cite[Theorem 1.1]{Woo2013}, in which the 
same upper bound is achieved subject to the constraint $s\ge k^2-1$. Prior to the advent of efficient 
congruencing in \cite{Woo2012a}, meanwhile, estimates of the type supplied by Corollary 
 \ref{corollary1.2} were available only for $s\ge (1+o(1))k^2\log k$ (see \cite{ACK2004}, \cite{Vin1947},
 \cite{Woo1992} and \cite{Woo1996}).\par

The conclusion of Theorem \ref{theorem1.1} improves very significantly on the bounds previously 
available for $J_{s,k}(X)$ in the range $\frac{5}{8}k^2\le s<k^2-1$. By way of comparison, earlier work 
of the author joint with Ford \cite[Theorem 1.2(i)]{FW2013} shows that the bound (\ref{1.4}) holds with 
$\del_s=m^2$ whenever $2m\le k$ and one has $s\ge (k-m)^2+(k-m)$. Thus, in the situation with 
$s=\alp k^2$, in which $\alp$ is a parameter with $\frac{1}{4}\le \alp \le 1$, one has a bound of the
 shape (\ref{1.4}) with $\del_s=(1-\sqrt{\alp})^2k^2+O(k)$. Theorem \ref{theorem1.1}, on the other
 hand, shows that when $\frac{5}{8}\le \alp \le 1$, the estimate (\ref{1.4}) holds with 
$\del_s=C(\alp)k+O(1)$, where
$$C(\alp)=\frac{2-3\alp+(2\alp-1)^{3/2}}{3\alp}.$$
The superiority of our conclusion in the latter interval is clear, since, for the first time, we demonstrate that
 the bound (\ref{1.4}) holds with $\del_s=O(k)$ throughout the interval $\frac{5}{8}k^2<s<k^2$. In 
some sense, therefore, our bounds are near-optimal in the latter range.\par

We pause at this stage to remark that our methods are by no means restricted to the interval 
$\frac{5}{8}k^2<s<k^2$. We have constrained ourselves in this paper to the latter interval in order that 
the ideas underlying our {\it multigrade efficient congruencing} method be transparent. At the same time,
 the new estimates that we make available by imposing this restriction support the bulk of applications 
stemming from this circle of ideas. In forthcoming work \cite{Woo2014}, we tackle the considerable 
technical complications arising from a choice of parameters in which $r$ is permitted to be substantially 
smaller than $\frac{1}{2}(k+1)$. In this way, when $s=\frac{1}{2}k(k+1)$, we are able to establish an 
estimate of the shape (\ref{1.4}) with $\del_s=\tfrac{1}{3}k+o(k)$. The transition to exponents $s$ with 
$s<\frac{1}{2}k(k+1)$ poses further significant challenges. Here, we are able to extend the range 
$1\le s\le \frac{1}{4}(k+1)^2$ in which the estimate $J_{s,k}(X)\ll X^{s+\eps}$ is known to hold. The
 latter, established in \cite[Theorem 1.1]{FW2013}, substantially extends the classical range 
$1\le s\le k+1$ in which the main conjecture (\ref{1.2}) was previously known to hold in the 
diagonally dominated regime.\par

We next explore applications of our methods in the context of Waring's problem. When $s$ and $k$ are
 natural numbers, let $R_{s,k}(n)$ denote the number of representations of the natural number $n$ as 
the sum of $s$ $k$th powers of positive integers. A formal application of the circle method suggests that 
for $k\ge 3$ and $s\ge k+1$, one should have
\begin{equation}\label{1.5}
R_{s,k}(n)=\frac{\Gam(1+1/k)^s}{\Gam(s/k)}\grS_{s,k}(n)n^{s/k-1}+o(n^{s/k-1}),
\end{equation}
where
$$\grS_{s,k}(n)=\sum_{q=1}^\infty\sum^q_{\substack{a=1\\ (a,q)=1}}\Bigl( q^{-1}\sum_{r=1}^q
e(ar^k/q)\Bigr)^se(-na/q).$$
With suitable congruence conditions imposed on $n$, one has $1\ll \grS_{s,k}(n)\ll n^\eps$, so that 
the conjectured relation (\ref{1.5}) constitutes an honest asymptotic formula. Let $\Gtil(k)$ denote the 
least integer $t$ with the property that, for all $s\ge t$, and all sufficiently large natural numbers $n$, one 
has the asymptotic formula (\ref{1.5}). By combining the conclusion of Theorem \ref{theorem1.1} with
 our recent work concerning the asymptotic formula in Waring's problem \cite{Woo2012b}, and the 
enhancement \cite[Theorem 8.5]{FW2013} of Ford's work \cite{For1995}, in \S10 we are able to derive 
new upper bounds for $\Gtil(k)$. We defer to \S10 a full account of these bounds, contenting ourselves
 for the present with the enunciation of the most striking consequences.

\begin{theorem}\label{theorem1.3}
Let $\xi$ denote the real root of the polynomial $20\xi^3+4\xi^2-1$, and put
 $C=(19+75\xi-12\xi^2)/(8+60\xi)$, so that
$$\xi=0.312383\ldots \quad \text{and}\quad C=1.542749\ldots.$$
Then for large values of $k$, one has $\Gtil(k)<Ck^2+O(k)$.
\end{theorem}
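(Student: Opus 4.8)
The plan is to combine the new Vinogradov bound of Theorem~\ref{theorem1.1} with the existing machinery connecting the mean value $J_{s,k}(X)$ to the asymptotic formula in Waring's problem. Recall that for the asymptotic formula \eqref{1.5} to hold, one needs to show that the minor-arc contribution to the circle method integral is $o(n^{s/k-1})$, and the standard route to this is a bound of the shape $J_{s,k}(X)\ll X^{2s-\frac12k(k+1)+\del_s+\eps}$ with $\del_s$ sufficiently small relative to $s$. Specifically, I would invoke \cite[Theorem 8.5]{FW2013} (the enhancement of Ford's work \cite{For1995}) together with \cite{Woo2012b}: these results assert that $\Gtil(k)<t$ provided that one has an admissible exponent for $J_{s,k}$ at some $s$ in a suitable range, with the loss $\del_s$ controlled. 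The output is a criterion of the form: if $\del_s<\psi(s,k)$ for an explicit threshold function $\psi$, then $\Gtil(k)\le 2s+O(1)$ (or similar), so the game is to choose $s=\alp k^2$ to minimise the resulting bound on $\Gtil(k)$ subject to the constraint furnished by Theorem~\ref{theorem1.1}.

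First I would record, from Theorem~\ref{theorem1.1}, that for $s=\alp k^2$ with $\tfrac58\le\alp\le 1$ one has \eqref{1.4} with $\del_s=C(\alp)k+O(1)$, where $C(\alp)=(2-3\alp+(2\alp-1)^{3/2})/(3\alp)$; here $r$ is taken as large as the hypotheses permit, namely $r\sim(1-\sqrt{\alp})k$ so that $s\approx(k-\tfrac r2)^2+\tfrac14(r+3)^2$. Next I would feed this into the Waring's problem apparatus of \S10, which I anticipate yields a bound of the form $\Gtil(k)<2\alp k^2+O(k)$ \emph{provided} the admissible $\del_s$ satisfies an inequality such as $\del_s\le (\text{const})\cdot(s-\tfrac12k(k+1))/k$ or, after unwinding, $C(\alp)\le g(\alp)$ for an explicit rational function $g$ coming from the Ford--Wooley analysis. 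Equality in that constraint pins down the optimal $\alp$, and I would expect the extremal $\alp$ to be governed precisely by the cubic $20\xi^3+4\xi^2-1=0$ through the substitution $\xi = $ (something like $\sqrt{2\alp-1}$ or a related quantity appearing inside $C(\alp)$), with the final constant $C=(19+75\xi-12\xi^2)/(8+60\xi)$ emerging as $2\alp$ at the optimum.

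Concretely, the steps are: (i) parametrise $s=\alp k^2$ and extract $\del_s=C(\alp)k+O(1)$ from Theorem~\ref{theorem1.1}; (ii) quote the precise statement from \S10 (itself built on \cite{Woo2012b} and \cite[Theorem 8.5]{FW2013}) giving $\Gtil(k)$ in terms of the pair $(s,\del_s)$; (iii) substitute and obtain $\Gtil(k)<2\alp k^2+O(k)$ subject to a single scalar inequality relating $\alp$ (equivalently $\xi$) to the permissible loss; (iv) optimise over $\alp$, verifying that the critical point is the real root $\xi=0.312383\ldots$ of $20\xi^3+4\xi^2-1$ and that the corresponding value $2\alp$ equals $C=1.542749\ldots$; (v) check the boundary/feasibility conditions, namely that the optimal $\alp$ lies in $[\tfrac58,1]$ (so Theorem~\ref{theorem1.1} genuinely applies) and that $r\le\min\{k-2,\tfrac12(k+1)\}$ is met for large $k$, and that the Waring criterion's hypotheses (e.g.\ $s\ge k+1$, singular series positivity) hold. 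The main obstacle will be step~(iii)--(iv): correctly transcribing the interface between the $J_{s,k}$ bound and the $\Gtil(k)$ estimate — tracking exactly how $\del_s$ enters the minor-arc estimate and the auxiliary mean values in \cite[Theorem 8.5]{FW2013} — so that the optimisation produces \emph{this} particular cubic rather than a nearby one, and then confirming numerically that the root and the constant match the stated decimals. The rest is routine calculus and bookkeeping with error terms of size $O(k)$.
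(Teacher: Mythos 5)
There is a genuine gap at precisely the interface you flag as the main obstacle, and it is fatal to the optimisation in your steps (iii)--(iv). The criterion you posit --- ``$\Gtil(k)\le 2s+O(1)$ provided $\del_s<\psi(s,k)$'' --- is essentially \cite[Theorem 2.1]{Woo2012b} used on its own, and that route can never beat $\Gtil(k)\le 2k^2+O(k)$: for the minor-arc moment of order $2s$ to win against the major arcs one needs $\Del_{s,k}<1$, and by Theorem \ref{theorem1.1} this forces $s=k^2+O(k)$. In particular the final bound is not ``$2\alp k^2$ where $s=\alp k^2$ is the exponent at which Theorem \ref{theorem1.1} is applied'', the optimisation producing the cubic is not over $\alp$, and $\xi$ is not of the form $\sqrt{2\alp-1}$ (numerically $\sqrt{2\cdot 0.771-1}\approx 0.737\ne 0.312$).

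The mechanism actually required is a H\"older interpolation involving \emph{two} Vinogradov exponents together with an auxiliary Weyl-differencing parameter. One takes $t=k^2-k+1$, where Corollary \ref{corollary1.2} gives $\Del_{t,k}=0$, to bound $\int_\grm|g|^{2t}\d\alp$; and separately $v=k^2-rk+\tfrac12 r(r+3)\approx \tfrac58 k^2$ with $r=\lfloor\tfrac12(k+1)\rfloor$, where Theorem \ref{theorem1.1} gives $\Del_{v,k}\le \tfrac{2}{15}k+O(1)$. The crucial input from \cite[Theorem 8.5]{FW2013} is that the complete moment $\int_0^1|g|^{2v+w(w-1)}\d\alp$ is $\ll X^{2v+w(w-1)-k+\Del^+_{v,k}/w+\eps}$: the permissible exponent is \emph{divided by} the parameter $w$. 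Writing $w=\lfloor\bet k\rfloor$, the loss $\tfrac{2}{15}k$ becomes the bounded quantity $\tfrac{2}{15}/\bet$, at the price of consuming $w(w-1)\approx\bet^2k^2$ extra variables; interpolating this against the $2t$-th minor-arc moment (Lemma \ref{lemma10.6} and the definition (\ref{10.8})) yields $u_1(k)/(2k^2)\le(60\bet^3+75\bet+16)/(120\bet+16)+O(1/k)$. It is the optimisation over $\bet$ --- not over $s/k^2$ --- that produces the cubic $20\xi^3+4\xi^2-1=0$ and the constant $C=(19+75\xi-12\xi^2)/(8+60\xi)$, after which Lemma \ref{lemma10.7} completes the proof. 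Without identifying the $\Del^+_{v,k}/w$ division and the resulting trade-off in $w$, your scheme cannot reach any constant below $2$.
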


Until recently, the sharpest available estimate for $\Gtil(k)$ for larger $k$ was the bound 
$\Gtil(k)\le k^2(\log k+\log \log k+O(1))$ due to Ford \cite{For1995}. This situation was changed with 
the arrival of efficient congruencing, and the most recent work 
\cite[Corollary 9.4]{FW2013} shows that $\Gtil(k)\le 2k^2-2^{2/3}k^{4/3}+O(k)$. Thus the bound
 supplied by Theorem \ref{theorem1.3} provides the first improvement on that of \cite[Theorem 1.4]
{Woo2012a} in which the leading term is reduced by a constant factor. For smaller values of $k$, one may 
compute explicitly the upper bounds for $\Gtil(k)$ that stem from our methods.\par

\begin{theorem}\label{theorem1.4}
With $H(k)$ defined as in Table 1, one has $\Gtil(k)\le H(k)$.
\end{theorem}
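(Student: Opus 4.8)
The plan is to regard Theorem \ref{theorem1.4} as the numerical shadow of the general apparatus assembled in \S10: for each value of $k$ appearing in Table 1 one runs precisely the chain of deductions that yields Theorem \ref{theorem1.3}, but with the free parameters optimised numerically rather than asymptotically, and then reads off the resulting integer. The proof is therefore a bookkeeping exercise resting on three ingredients already in hand. The first is the mean value estimate of Theorem \ref{theorem1.1} --- or, where it is sharper, the alternative bound of Theorem \ref{theorem9.2} --- which supplies (\ref{1.4}) with defect $\del_s=\del_{s,k,r}$ for any admissible $r$. The second is the criterion of \cite{Woo2012b} which, given a bound of the shape $J_{t,k}(X)\ll X^{2t-\frac{1}{2}k(k+1)+\del+\eps}$ for some number $t$ of variables, establishes the asymptotic formula (\ref{1.5}) for all $s$ exceeding an explicit threshold depending on $t$, $k$ and $\del$. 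The third is the pruning device of \cite[Theorem 8.5]{FW2013}, itself a refinement of Ford's work \cite{For1995}, which handles the complementary intermediate range of $s$ via the major arcs.

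First I would extract from the analysis of \S10 the explicit threshold $T_k(r,t)$ with the property that (\ref{1.5}) holds for all $s\ge T_k(r,t)$ whenever Theorem \ref{theorem1.1} is invoked with admissible $r$ at $t$ variables; this threshold amalgamates the minor-arc contribution bounded via $J_{t,k}$, the major-arc singular integral and singular series, and the pruning step --- precisely the combination that underlies Theorem \ref{theorem1.3}. Next, for each fixed $k$ I would minimise $T_k(r,t)$ over the admissible integers $1\le r\le\min\{k-2,\tfrac{1}{2}(k+1)\}$ and over $t\ge k^2-rk+\tfrac{1}{2}r(r+3)-1$, bearing in mind the tradeoff that increasing $t$ shrinks the defect $\del_{t,k,r}$, and hence improves the minor-arc input, but may enlarge the threshold through the remaining terms; one also requires the inequality ensuring that the pruning lemma applies, broadly that $\del_{t,k,r}$ lie below the pertinent threshold, as exemplified by the condition $\del_{s,k,r}<r^2/k$ recorded in Theorem \ref{theorem1.1}. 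Finally, for the smaller values of $k$ one should compare the value so obtained with the previously known bounds for $\Gtil(k)$ and retain the smaller; the tabulated $H(k)$ is then, by definition, this minimum, rounded up to an integer.

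The main obstacle is not any individual deep estimate but the need to be certain that the optimisation has genuinely located the sharpest admissible configuration at each $k$, and here two points demand care. First, the parameter $r$ is pinned between two constraints of quite different character: for small $k$ the binding restriction is $r\le k-2$, whereas for larger $k$ it is $r\le\tfrac{1}{2}(k+1)$, so the behaviour of $r\mapsto\min_t T_k(r,t)$ changes regime, and both cases must be checked rather than extrapolated from the large-$k$ picture underlying Theorem \ref{theorem1.3}. Second, one must verify in each instance that the defect $\del_{t,k,r}$ actually lies in the range for which the pruning apparatus of \cite[Theorem 8.5]{FW2013} remains valid; when it does not, the remedy is either to increase $t$ slightly, thereby shrinking $\del_{t,k,r}$, or to revert to a configuration with a smaller $r$, and the value recorded must be the one minimising the threshold. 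Once these verifications are in place, the entries of Table 1 follow by direct computation, which completes the proof.
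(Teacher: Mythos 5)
Your overall architecture matches the paper's in outline: bound the minor-arc moment of $g_k$ via \cite[Theorem 2.1]{Woo2012b} together with the new permissible exponents $\Del_{t,k}$ coming from Theorems \ref{theorem1.1} and \ref{theorem9.2} (including the interpolation over $s$ described after (\ref{10.5})), apply H\"older against an auxiliary complete moment, and finish with the standard major-arc analysis following \cite[Lemma 3.1]{Woo2012b}. But you have chosen the wrong auxiliary moment for the range of Table 1. You interpolate against the pruning estimate \cite[Theorem 8.5]{FW2013}; that is the route the paper uses only for the asymptotic bound of Theorem \ref{theorem1.3} (Lemmas \ref{lemma10.6} and \ref{lemma10.7}, the quantity $u_1(k)$), and the paper states explicitly that this route beats the alternative only for $k$ exceeding about $25$. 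For $5\le k\le 20$ the paper instead interpolates the minor-arc bound against Hua's lemma, $\int_0^1|g(\alp)|^{2^{j+1}}\d\alp\ll X^{2^{j+1}-j-1}$, optimising over $t$ and $j$ via the explicit formula (\ref{10.6}) for $s_1(k)=\min s_0(k,t,j)$, and then invokes $\Gtil(k)\le\lfloor s_1(k)\rfloor+1$ (Lemmas \ref{lemma10.3} and \ref{lemma10.4}). Running your optimisation with \cite[Theorem 8.5]{FW2013} in place of Hua's lemma would produce values strictly weaker than the entries $H(k)$ of Table 1 throughout this range, so the proof as proposed does not establish the theorem.

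Two smaller corrections. The side condition you need in the interpolation is not $\del_{s,k,r}<r^2/k$ (that inequality plays no role here); it is $\Del_{t,k}<1$, which guarantees that the minor-arc $2t$-th moment is $o(X^{2t-k})$ and hence genuinely smaller than the major-arc contribution. And $H(k)$ is not defined as the minimum of the new bound with previously known bounds for $\Gtil(k)$: every entry of Table 1 comes from the new method via $H(k)=\lfloor s_1(k)\rfloor+1$, and the comparison with Vaughan and with \cite{Woo2013}, \cite{FW2013} is made only to record that each entry is an improvement.
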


$$\boxed{\begin{matrix} k&5&6&7&8&9&10&11&12\\
H(k)&28&43&61&83&107&134&165&199\end{matrix}}$$
$$\boxed{\begin{matrix}k&13&14&15&16&17&18&19&20\\
H(k)&236&276&320&368&418&473&530&592\end{matrix}}$$
\vskip.2cm
\begin{center}\text{Table 1: Upper bounds for $\Gtil(k)$ described in Theorem 
\ref{theorem1.4}.}\end{center}
\vskip.1cm
\noindent 

For comparison, Vaughan \cite[Theorem 1]{Vau1986b} establishes the bound $\Gtil(5)\le 32$, Wooley
 \cite[Corollary 1.7]{Woo2013} gives
 $$\Gtil(6)\le 52,\, \Gtil(7)\le 75,\, \Gtil(8)\le 103,\ \Gtil(9)\le 135,\, \Gtil(10)\le 171,\, \Gtil(11)\le 211,$$
and Ford and Wooley \cite[Corollary 9.3]{FW2013} show that
$$\Gtil(12)\le 253,\, \Gtil(13)\le 299,\ \Gtil(14)\le 349,\, \Gtil(15)\le 403,\ \Gtil(16)\le 460,$$
$$\Gtil(17)\le 521,\ \Gtil(18)\le 587,\, \Gtil(19)\le 656,\ \Gtil(20)\le 729.$$
In particular,  we have in Theorem \ref{theorem1.4} the first improvement on the bound of Vaughan, 
itself closely aligned with that of Hua, for $k=5$. Methods based on Weyl differencing consequently
 remain significant only for $k=3$ and $4$. We note that for $k=4$, in a formal sense our methods
 show that $\Gtil(4)\le 16.311$, falling somewhat short of the bound $\Gtil(4)\le 16$ established by 
Vaughan (see \cite[Theorem 1]{Vau1986b}).\par

We consider further consequences of our new estimates in \S\S11 and 12. In particular, there are 
improvements in estimates of Weyl type for exponential sums, in the distribution of polynomials modulo 
$1$, and in Tarry's problem.\par

We direct the reader to a sketch of the basic efficient congruencing method in \cite[\S2]{Woo2012a} for an 
 introduction to such methods. It may be useful, however, to offer some insight concerning the strategy
 underlying our new multigrade efficient congruencing method. A key step in the efficient congruencing
 approach to Vinogradov's mean value theorem is that of bounding $J_{s,k}(X)$ in terms of an auxiliary
 mean value, in which certain variables are related by the congruences
\begin{equation}\label{1.6}
\sum_{i=1}^kx_i^j\equiv \sum_{i=1}^ky_i^j\mmod{p^{jb}}\quad (1\le j\le k).
\end{equation}
Here, for the purpose of illustration, we suppose that $1\le x_i,y_i\le p^{kb}$, that the $x_i$ are distinct
 modulo $p$, and likewise the $y_i$. The classical approach to Vinogradov's mean value theorem studies
 the situation here with $b=1$. In our first work \cite{Woo2012a} on efficient congruencing, we observe
 that by lifting solutions modulo $p$ to solutions modulo $p^{kb}$ of the system (\ref{1.6}), one may
 suppose without loss that $x_i\equiv y_i\mmod{p^{kb}}$ $(1\le i\le k)$, provided that one inserts a
 factor $k!p^{\frac{1}{2}k(k-1)b}$ into the ensuing estimates to reflect the number of solutions modulo
 $p^{kb}$ for $\bfx$ given a fixed choice of $\bfy$. By applying H\"older's inequality, one obtains a new
 system of the shape (\ref{1.6}) with $b$ replaced by $kb$, and a concentration argument establishes the
 conjectured bound (\ref{1.2}) whenever $s\ge k(k+1)$.\par

The heuristic argument described in \cite[\S11]{Woo2012a} takes as its starting point the conjectural 
proposition that solutions modulo $p$ of the system (\ref{1.6}) may be lifted componentwise, in such a
 manner that one may suppose without loss that $x_i\equiv y_i\mmod{p^{ib}}$ $(1\le i\le k)$, provided
 that one inserts a factor $k!$ into the ensuing estimates. The average degree of the congruence
 concentration is thus essentially halved, greatly improving the efficiency of the method.\par

Lack of independence amongst the variables in such an approach prevents this idea from being anything
 other than one of heuristic significance. However, with $r$ a parameter satisfying $1\le r<k$ to be chosen 
in due course, one may extract from (\ref{1.6}) the congruence relation 
$x_i\equiv y_i\mmod{p^{(k-r)b}}$ $(1\le i\le k)$, at the cost of inserting a factor 
$k!p^{\frac{1}{2}(k-r)(k-r-1)b}$ into the ensuing estimates. By applying H\"older's inequality to the 
associated mean values, one may relate the central mean value to a product of mean values, one in which
 $k-r$ pairs of variables have been extracted subject to a congruence condition modulo $p^{(k-r)b}$, and
 another involving the system of congruences
\begin{equation}\label{1.7}
\sum_{i=1}^rx_i^j\equiv \sum_{i=1}^ry_i^j\mmod{p^{jb}}\quad (1\le j\le k).
\end{equation}
One may preserve the condition that the $x_i$ here are distinct modulo $p$, and likewise the $y_i$. Thus 
we may infer that $x_i\equiv y_i\mmod{p^{(k-r+1)b}}$ $(1\le i\le r)$ at essentially no cost. A further 
application of H\"older's inequality enables us to relate this mean value to another product of mean values, 
one in which a further pair of variables have been extracted subject to a congruence condition modulo 
$p^{(k-r+1)b}$, and another involving a system of the shape (\ref{1.7}), but now with $r$ replaced by 
$r-1$. Repeating this procedure, we successively extract pairs of variables, mutually congruent modulo 
$p^{(k-r+j)b}$ $(1\le j\le r)$, for use in auxiliary mean values elsewhere in the argument. In this way, 
one recovers an approximation to the heuristic basis of our analysis in \cite[\S11]{Woo2012a}. Needless 
to say, there are considerable technical complications both in coaxing this approximation to behave like the 
heuristic approach, and indeed in analysing the consequences only previously discussed in the broadest 
terms.\par

A perusal of \S\S2--9 of this paper will reveal that our multigrade efficient congruencing method is of 
considerable flexibility. The reader may wonder to what extent the particular arrangement of parameters 
employed herein is optimal. Thus, the congruence condition modulo $p^{(k-r)b}$ is applied at the outset, 
and applies to many pairs of variables, with subsequent higher congruences imposed one pair of variables 
at a time. While this arrangement has been pursued following a great deal of time consuming experimentation, 
some guidance is possible for readers seeking to become fully immersed in the underlying methods. In this 
paper we have concentrated on the situation for larger moments, and here as much as possible the full weight 
of congruence savings must be preserved in order to obtain near-optimal estimates. While the initial step of our 
procedure realises the full potential of the congruence condition modulo $p^{(k-r)b}$, subsequent steps 
become possible only following appropriate applications of H\"older's inequality. Each application of the latter 
slightly diminishes the potential savings associated with these subsequent steps, and it seems that for this 
reason, it is more profitable to stack the lower congruence levels ``up front'' rather than spacing out the 
progress to full level $p^{kb}$ more gradually.

\section{Preliminary discussion of infrastructure} We launch our account of the the proof of Theorem 
\ref{theorem1.1}, and the closely allied Theorem \ref{theorem9.2}, by assembling the components
 required for the application of the multigrade efficient congruencing method. Here, where possible, we 
incorporate the simplifying man\oe uvres of \cite{FW2013} into the basic infrastructure developed in 
\cite{Woo2012a} and \cite{Woo2013}. Since we consider the integer $k$ to be fixed, we abbreviate 
$J_{s,k}(X)$ to $J_s(X)$ without further comment. Let $s$ be an arbitrary natural number, and define the
 real number $\lam_s^*$ by means of the relation
$$\lam_s^*=\underset{X\rightarrow \infty}{\lim \sup}\frac{\log J_s(X)}{\log X}.$$
Thus, for each $\eps>0$, and any real number $X$ sufficiently large in terms of $s$, $k$ and $\eps$, one 
has $J_s(X)\ll X^{\lam_s^*+\eps}$. In view of the lower bound (\ref{1.3}), together with a trivial bound 
for $J_s(X)$, we have
\begin{equation}\label{2.1}
\max\{s,2s-\tfrac{1}{2}k(k+1)\}\le \lam_s^*\le 2s,
\end{equation}
while the conjectured upper bound (\ref{1.2}) implies that the first inequality in (\ref{2.1}) should hold 
with equality.\par

We recall some notational conventions from our previous work. The letters $s$ and $k$ denote natural 
numbers with $k\ge 3$, and $\eps$ denotes a sufficiently small positive number. The basic parameter 
occurring in our asymptotic estimates is $X$, a large real number depending at most on $k$, $s$ and 
$\eps$, unless otherwise indicated. Whenever $\eps$ appears in a statement, we assert that the statement
 holds for each $\eps>0$. As usual, we write $\lfloor \psi\rfloor$ to denote the largest integer no larger than 
$\psi$, and $\lceil \psi\rceil$ to denote the least integer no smaller than $\psi$. We make sweeping and 
cavalier use of vector notation. Thus, with $t$ implied from the environment at hand, we write 
$\bfz\equiv \bfw\pmod{p}$ to denote that $z_i\equiv w_i\pmod{p}$ $(1\le i\le t)$, or 
$\bfz\equiv \xi\pmod{p}$ to denote that $z_i\equiv \xi\pmod{p}$ $(1\le i\le t)$, or $[\bfz\mmod{q}]$ to 
denote the $t$-tuple $(\zet_1,\ldots ,\zet_t)$ where for $1\le i\le t$ one has $1\le \zet_i\le q$ and 
$z_i\equiv \zet_i\mmod{q}$. Finally, we employ the convention that whenever 
$G:[0,1)^k\rightarrow \dbC$ is integrable, then
$$\oint G(\bfalp)\d\bfalp =\int_{[0,1)^k}G(\bfalp)\d\bfalp .$$
Thus, on writing
\begin{equation}\label{2.2}
f_k(\bfalp;X)=\sum_{1\le x\le X}e(\alp_1x+\alp_2x^2+\ldots +\alp_kx^k),
\end{equation}
where as usual $e(z)$ denotes $e^{2\pi iz}$, it follows from orthogonality that
\begin{equation}\label{2.3}
J_{s,k}(X)=\oint |f_k(\bfalp;X)|^{2s}\d\bfalp .
\end{equation}

\par We use the index $\iota$ to select choices of parameters appropriate for the proof of our main
 theorems. We take $\iota=0$ to indicate a choice of parameters appropriate for the proof of Theorem 
\ref{theorem9.2}, and $\iota=1$ for a choice appropriate for the proof of Theorem \ref{theorem1.1}. Let 
$r$ be an integral parameter satisfying
\begin{equation}\label{2.4}
1\le r\le \min\{k-2,\tfrac{1}{2}(k+1)\},
\end{equation}
and define
\begin{equation}\label{2.5}
\nu_0(r,s)=\sum_{m=1}^r\frac{m(k-m-1)}{s-m}\quad \text{and}\quad \nu_1(r,s)=0.
\end{equation}
We take $\nu$ to be an integer with $0\le \nu\le \nu_\iota(r,s)$, put
\begin{equation}\label{2.6}
s_\iota(\nu)=k^2-(r+1)k+\tfrac{1}{2}r(r+3)-\nu \quad (\iota=0,1),
\end{equation}
and then consider an integer $s$ satisfying the lower bound $s\ge s_\iota(\nu)$. For brevity we write 
$\grw=s+k-1$ and $\lam=\lam_\grw^*$. Our goal is to establish the upper bound 
$\lam\le 2\grw-\tfrac{1}{2}k(k+1)+\Del$, where $\Del=\Del_\iota (\nu)$ is a carefully chosen target
 exponent satisfying $0\le \Del\le \frac{1}{2}k(k+1)$. We define
\begin{equation}\label{2.7}
\Del_0(\nu)=\sum_{m=1}^r\frac{(m-1)(k-m-1)}{s-m}-\frac{(\nu_0(r,s)-\nu)(r-1)}{s}
\end{equation}
and
\begin{equation}\label{2.8}
\Del_1(\nu)=s^{-1}\sum_{m=1}^r (m-1)(k-m-1).
\end{equation}

\par Let $N$ be an arbitrary natural number, sufficiently large in terms of $s$ and $k$, and put
\begin{equation}\label{2.9}
\tet=(16(s+k))^{-N-1}\quad \text{and}\quad \del=(1000N(s+k))^{-N-1}\tet. 
\end{equation}
In view of the definition of $\lam$, there exists a sequence of natural numbers $(X_l)_{l=1}^\infty$, 
tending to infinity, with the property that
\begin{equation}\label{2.10}
J_\grw(X_l)>X_l^{\lam-\del}\quad (l\in \dbN).
\end{equation}
Also, provided that $X_l$ is sufficiently large, one has the corresponding upper bound
\begin{equation}\label{2.11}
J_\grw(Y)<Y^{\lam+\del}\quad \text{for}\quad Y\ge X_l^{1/2}.
\end{equation}
We now consider a fixed element $X=X_l$ of the sequence $(X_l)_{l=1}^\infty$, which we may assume 
to be sufficiently large in terms of $s$, $k$ and $N$. We put $M=X^\tet$, and note from (\ref{2.9}) that
 $X^\del<M^{1/N}$. Throughout, constants implied in the notation of Landau and Vinogradov may
 depend on $s$, $k$, $N$, and also on $\eps$ in view of our earlier convention, but not on any other
 variable.\par

Let $p$ be a fixed prime number with $M<p\le 2M$ to be chosen in due course. When $c$ and $\xi$ are
 non-negative integers, and $\bfalp \in [0,1)^k$, define
\begin{equation}\label{2.12}
\grf_c(\bfalp;\xi)=\sum_{\substack{1\le x\le X\\ x\equiv \xi\mmod{p^c}}}e(\alp_1x+\alp_2x^2+\ldots
 +\alp_kx^k).
\end{equation}
As in \cite{Woo2012a}, we must consider well-conditioned tuples of integers belonging to distinct
 congruence classes modulo a suitable power of $p$. When $1\le m\le k-1$, denote by $\Xi_c^m(\xi)$
 the set of integral $m$-tuples $(\xi_1,\ldots ,\xi_m)$, with
$$1\le \bfxi\le p^{c+1}\quad \text{and}\quad \bfxi\equiv \xi\pmod{p^c},$$
and satisfying the property that $\xi_i\not \equiv \xi_j\pmod{p^{c+1}}$ for $i\ne j$. We then put
\begin{equation}\label{2.13}
\grF_c^m(\bfalp;\xi)=\sum_{\bfxi\in \Xi_c^m(\xi)}\prod_{i=1}^m\grf_{c+1}(\bfalp;\xi_i),
\end{equation}
where the exponential sums $\grf_{c+1}(\bfalp;\xi_i)$ are defined via (\ref{2.12}).\par

As in our previous work on the efficient congruencing method, certain mixed mean values play a critical 
role within our arguments. When $a$ and $b$ are positive integers, we define
\begin{equation}\label{2.14}
I_{a,b}^m(X;\xi,\eta)=\oint |\grF_a^m(\bfalp;\xi)^2\grf_b(\bfalp;\eta)^{2\grw-2m}|\d\bfalp
\end{equation}
and
\begin{equation}\label{2.15}
K_{a,b}^m(X;\xi,\eta)=\oint |\grF_a^m(\bfalp;\xi)^2\grF_b^{k-1}(\bfalp;\eta)^2
\grf_b(\bfalp;\eta)^{2s-2m}|\d \bfalp .
\end{equation}
We remark that, in order to permit the number of variables subject to the congruencing process to vary, 
which is tantamount to allowing the parameter $m$ to vary likewise, the definition of the mean value 
$K_{a,b}^m(X;\xi,\eta)$ is necessarily more complicated than analogues in our previous work on efficient
 congruencing. This will become apparent in \S6.\par

For future reference, it is useful to note that by orthogonality, the mean value $I_{a,b}^m(X;\xi,\eta)$
 counts the number of integral solutions of the system
\begin{equation}\label{2.16}
\sum_{i=1}^m(x_i^j-y_i^j)=\sum_{l=1}^{\grw-m}(v_l^j-w_l^j)\quad (1\le j\le k),
\end{equation}
with
$$1\le \bfx,\bfy,\bfv,\bfw\le X,\quad \bfv\equiv \bfw\equiv \eta \mmod{p^b},$$
$$[\bfx \mmod{p^{a+1}}]\in \Xi_a^m(\xi)\quad \text{and}\quad [\bfy\mmod{p^{a+1}}]
\in \Xi_a^m(\xi).$$
Similarly, the mean value $K_{a,b}^m(X;\xi,\eta)$ counts the number of integral solutions of the system
\begin{equation}\label{2.17}
\sum_{i=1}^m(x_i^j-y_i^j)=\sum_{l=1}^{k-1}(u_l^j-v_l^j)+\sum_{n=1}^{s-m}(w_n^j-z_n^j)
\quad (1\le j\le k),
\end{equation}
with
$$1\le \bfx,\bfy\le X,\quad [\bfx \mmod{p^{a+1}}]\in \Xi_a^m(\xi),\quad [\bfy\mmod{p^{a+1}}]
\in \Xi_a^m(\xi),$$
$$1\le \bfu,\bfv\le X,\quad [\bfu \mmod{p^{b+1}}]\in \Xi_b^{k-1}(\eta),
\quad [\bfv\mmod{p^{b+1}}]\in 
\Xi_b^{k-1}(\eta),$$
$$1\le \bfw,\bfz\le X,\quad \bfw\equiv \bfz\equiv \eta\mmod{p^b}.$$

Given a solution $\bfx$, $\bfy$, $\bfu$, $\bfv$, $\bfw$, $\bfz$ of the system (\ref{2.17}), an application
 of the Binomial Theorem shows that for $1\le j\le k$, one has
$$\sum_{i=1}^m((x_i-\eta)^j-(y_i-\eta)^j)=\sum_{l=1}^{k-1}
((u_l-\eta)^j-(v_l-\eta)^j)+\sum_{n=1}^{s-m}((w_n-\eta)^j-(z_n-\eta)^j).$$
But in any solution counted by $K_{a,b}^{m}(X;\xi,\eta)$, one has 
$\bfu\equiv \bfv\equiv \eta \mmod{p^b}$ and $\bfw\equiv \bfz\equiv \eta \mmod{p^b}$. We therefore 
deduce that
\begin{equation}\label{2.18}
\sum_{i=1}^m(x_i-\eta)^j\equiv \sum_{i=1}^m(y_i-\eta)^j\mmod{p^{jb}}\quad (1\le j\le k).
\end{equation}

\par It is convenient to put
\begin{equation}\label{2.19}
I_{a,b}^m(X)=\max_{1\le \xi\le p^a}\max_{\substack{1\le \eta\le p^b\\ \eta\not\equiv \xi\mmod{p}}}
I_{a,b}^m(X;\xi,\eta)
\end{equation}
and
\begin{equation}\label{2.20}
K_{a,b}^m(X)=\max_{1\le \xi\le p^a}\max_{\substack{1\le \eta\le p^b\\ \eta\not\equiv \xi\mmod{p}}}
K_{a,b}^m(X;\xi,\eta).
\end{equation}
Note here that although these mean values implicitly depend on our choice of the prime $p$, this choice 
depends on $s$, $k$, $r$, $\tet$ and $X_l$ alone. Since we fix $p$ in the precongruencing step described
 in \S5, following the proof of Lemma \ref{lemma5.1}, the particular choice will ultimately be rendered 
irrelevant.\par

The precongruencing step requires a definition of $K_{0,b}^m(X)$ aligned with the conditioning idea, and
 this we now describe. When $\bfzet$ is a tuple of integers, we denote by 
$\Xi^m(\bfzet)$ the set of $m$-tuples $(\xi_1,\ldots ,\xi_m)\in \Xi_0^m(0)$ such that 
$\xi_i\not\equiv \zet_j\mmod{p}$ for all $i$ and $j$. Recalling (\ref{2.12}), we put
$$\grF^m(\bfalp;\bfzet)=\sum_{\bfxi \in \Xi^m(\bfzet)}\prod_{i=1}^m\grf_1(\bfalp;\xi_i),$$
and then define
$$\Itil_c^m(X;\eta)=\oint |\grF^m(\bfalp;\eta)^2\grf_c(\bfalp;\eta)^{2\grw-2m}|\d\bfalp ,$$
\begin{equation}\label{2.21}
\Ktil_c^m(X;\eta)=\oint |\grF^m(\bfalp;\eta)^2\grF_c^{k-1}(\bfalp;\eta)^2\grf_c(\bfalp;\eta)^{2s-2m}|
\d\bfalp ,
\end{equation}
\begin{equation}\label{2.22}
K_{0,c}^m(X)=\max_{1\le \eta\le p^c}\Ktil_c^m(X;\eta).
\end{equation}

\par As in \cite{Woo2012a}, our arguments are simplified by making transparent the relationship between
 mean values and their anticipated magnitudes, although for present purposes we adopt a more flexible 
notation than that employed earlier. When $\grd$ and $\rho$ are non-negative numbers, we adopt the 
convention that
\begin{equation}\label{2.23}
\llbracket J_\grw(X)\rrbracketsub{\grd}=\frac{J_\grw(X)}{X^{2\grw-\frac{1}{2}k(k+1)+\grd}},
\end{equation}
\begin{equation}\label{2.24}
\llbracket I_{a,b}^m(X)\rrbracketsub{{\grd,\rho}}=\frac{(M^{ka-b})^\rho 
I_{a,b}^m(X)}{(X/M^b)^{2\grw-2m}(X/M^a)^{2m-\frac{1}{2}k(k+1)+\grd}}
\end{equation}
and
\begin{equation}\label{2.25}
\llbracket K_{a,b}^m(X)\rrbracketsub{{\grd,\rho}}=\frac{(M^{ka-b})^\rho 
K_{a,b}^m(X)}{(X/M^b)^{2\grw-2m}(X/M^a)^{2m-\frac{1}{2}k(k+1)+\grd}}.
\end{equation}
Using this notation, the bounds (\ref{2.10}) and (\ref{2.11}) may be rewritten as
\begin{equation}\label{2.26}
\llbracket J_\grw(X)\rrbracketsub{\Del}>X^{\Lam-\del}\quad \text{and}\quad 
\llbracket J_\grw(Y)\rrbracketsub{\Del}<Y^{\Lam+\del}\quad (Y\ge X^{1/2}),
\end{equation}
where $\Lam=\Lam(\Del)$ is defined by
\begin{equation}\label{2.27}
\Lam(\Del)=\lam-2\grw+\tfrac{1}{2}k(k+1)-\Del.
\end{equation}

\par We finish this section by recalling two simple estimates that encapsulate the translation-dilation 
invariance of the Diophantine system (\ref{1.1}).

\begin{lemma}\label{lemma2.1}
Suppose that $c$ is a non-negative integer with $c\tet\le 1$. Then for each natural number $u$, one has
$$\max_{1\le \xi\le p^c}\oint |\grf_c(\bfalp;\xi)|^{2u}\d\bfalp \ll_u J_u(X/M^c).$$
\end{lemma}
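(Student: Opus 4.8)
The plan is to convert the left-hand side into an honest copy of $J_u$ over the shorter interval of length roughly $X/M^c$, via the translation--dilation invariance of the system (\ref{1.1}), exactly in the spirit of the companion lemmas of \cite{Woo2012a}. First, fix $\xi$ and let $\xi_0\in\{1,\dots,p^c\}$ be its least positive residue modulo $p^c$. Every $x$ counted in (\ref{2.12}) is uniquely of the form $x=\xi_0+p^cy$ with $0\le y\le L$, where $L=\lfloor(X-\xi_0)p^{-c}\rfloor$, and by the Binomial Theorem $\sum_{j=1}^k\alp_j(\xi_0+p^cy)^j$ is a polynomial in $y$ of degree $k$. Hence, by orthogonality, $\oint|\grf_c(\bfalp;\xi)|^{2u}\d\bfalp$ counts the integral solutions of
\[
\sum_{i=1}^u(\xi_0+p^cs_i)^j=\sum_{i=1}^u(\xi_0+p^ct_i)^j\qquad(1\le j\le k)
\]
with $0\le s_i,t_i\le L$. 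Expanding both sides by the Binomial Theorem, the degree-zero terms cancel, and for each $j$ the resulting equation is a linear combination of the quantities $\sum_i s_i^l-\sum_i t_i^l$ $(1\le l\le j)$ in which the coefficient of $\sum_i s_i^j-\sum_i t_i^j$ is $p^{cj}\ne0$. Thus this triangular system of $k$ equations is equivalent to $\sum_i s_i^l=\sum_i t_i^l$ $(1\le l\le k)$, so $\oint|\grf_c(\bfalp;\xi)|^{2u}\d\bfalp$ equals the number of solutions of (\ref{1.1}) with all $2u$ variables lying in $\{0,1,\dots,L\}$, which is at most $J_u(L+1)$.

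Next I would dispose of the rounding. Writing $X=qp^c+\rho$ with $0\le\rho<p^c$ one checks directly that $L+1\le\lceil Xp^{-c}\rceil$; since $M<p$ we have $Xp^{-c}<XM^{-c}$, whence $\lceil Xp^{-c}\rceil\le\lceil XM^{-c}\rceil$; and the hypothesis $c\tet\le1$ gives $XM^{-c}=X^{1-c\tet}\ge1$, so $\lceil XM^{-c}\rceil\le\lfloor XM^{-c}\rfloor+1$. Finally, splitting $[1,n+1]$ into $[1,n]$ and $\{n+1\}$ and using $|f_k(\bfalp;n+1)|\le|f_k(\bfalp;n)|+1$ together with $J_u(n)\ge1$ gives, on integrating over $[0,1)^k$, the elementary bound $J_u(n+1)\ll_u J_u(n)$. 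Combining this with the monotonicity of $J_u$ in its argument yields $J_u(L+1)\ll_u J_u(XM^{-c})$. Since all of these estimates are uniform in $\xi$, taking the maximum over $1\le\xi\le p^c$ completes the argument.

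I do not expect any genuine obstacle: the statement is pure infrastructure and its proof is a bookkeeping exercise built on translation--dilation invariance. The only point requiring attention is checking that the floor/ceiling comparisons run in the right direction, so that the shortened interval really has length $O(XM^{-c})$; this is precisely where the condition $c\tet\le1$, equivalently $M^c\le X$, is used.
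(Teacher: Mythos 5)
Your proof is correct; the paper itself simply quotes \cite[Lemma 3.1]{Woo2012a} at this point, and your argument is a faithful, self-contained rendition of exactly the standard translation--dilation proof of that cited lemma, including the correct handling of the floor/ceiling comparisons and the use of $c\tet\le 1$.
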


\begin{proof} This is \cite[Lemma 3.1]{Woo2012a}.
\end{proof}

\begin{lemma}\label{lemma2.2}
Suppose that $c$ and $d$ are non-negative integers with $c\le \tet^{-1}$ and $d\le \tet^{-1}$. Then 
whenever $u,v\in \dbN$ and $\xi,\zet\in \dbZ$, one has
$$\oint |\grf_c(\bfalp;\xi)^{2u}\grf_d(\bfalp;\zet)^{2v}|\d\bfalp \ll_{u,v}(J_{u+v}(X/M^c))^{u/(u+v)}
(J_{u+v}(X/M^d))^{v/(u+v)}.$$
\end{lemma}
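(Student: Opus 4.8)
The plan is to deduce this directly from H\"older's inequality followed by Lemma \ref{lemma2.1}. The exponent $2u$ attached to $\grf_c(\bfalp;\xi)$ and the exponent $2v$ attached to $\grf_d(\bfalp;\zet)$ sum to $2(u+v)$, so the natural split is to use H\"older with conjugate indices $(u+v)/u$ and $(u+v)/v$. Thus the first step is to write
$$\oint |\grf_c(\bfalp;\xi)^{2u}\grf_d(\bfalp;\zet)^{2v}|\d\bfalp \le \Bigl( \oint |\grf_c(\bfalp;\xi)|^{2(u+v)}\d\bfalp \Bigr)^{u/(u+v)}\Bigl( \oint |\grf_d(\bfalp;\zet)|^{2(u+v)}\d\bfalp \Bigr)^{v/(u+v)}.$$

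The second step is to bound each of the two factors on the right-hand side using Lemma \ref{lemma2.1}. Here one uses the hypotheses $c\le \tet^{-1}$ and $d\le \tet^{-1}$, which are precisely the conditions $c\tet\le 1$ and $d\tet\le 1$ required to invoke that lemma with $u$ there replaced by $u+v$; since Lemma \ref{lemma2.1} supplies a bound uniform in the residue $\xi$, it applies in particular to the fixed values $\xi$ and $\zet$ appearing here. One obtains
$$\oint |\grf_c(\bfalp;\xi)|^{2(u+v)}\d\bfalp \ll_{u+v} J_{u+v}(X/M^c)\quad\text{and}\quad \oint |\grf_d(\bfalp;\zet)|^{2(u+v)}\d\bfalp \ll_{u+v} J_{u+v}(X/M^d).$$
Substituting these into the H\"older estimate and absorbing the two implied constants into a single constant depending on $u+v$ (hence on $u$ and $v$) yields the claimed inequality.

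There is essentially no obstacle here: the only point that needs any care is the bookkeeping of exponents in the H\"older step and the verification that the hypotheses on $c$ and $d$ are exactly what Lemma \ref{lemma2.1} demands. One should also note that the conclusion is stated with $\ll_{u,v}$ rather than with an absolute constant, so no effort is needed to track the constant beyond recording its dependence on $u$ and $v$ through $u+v$.
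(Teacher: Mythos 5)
Your proof is correct; the paper itself simply cites \cite[Corollary 2.2]{FW2013} for this statement, and your H\"older-plus-Lemma \ref{lemma2.1} argument is exactly how that corollary is established. The only micro-point worth recording is that $\grf_c(\bfalp;\xi)$ depends on $\xi$ only through its residue class modulo $p^c$, so the maximum over $1\le \xi\le p^c$ in Lemma \ref{lemma2.1} does indeed cover arbitrary $\xi\in\dbZ$.
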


\begin{proof} This is \cite[Corollary 2.2]{FW2013}.
\end{proof}

\section{Auxiliary systems of congruences} There are two primary regimes of interest so far as auxiliary
 congruences are concerned. Fortunately, we are able to extract suitable estimates from our previous work
 \cite{FW2013, Woo2012a, Woo2013}, though this requires that we recall in detail the notation introduced 
in the latter papers. When $a$ and $b$ are integers with $1\le a<b$, we denote by 
$\calB_{a,b}^n(\bfm;\xi,\eta)$ the set of solutions of the system of congruences
\begin{equation}\label{3.1}
\sum_{i=1}^n(z_i-\eta)^j\equiv m_j\mmod{p^{jb}}\quad (1\le j\le k),
\end{equation}
with $1\le \bfz\le p^{kb}$ and $\bfz\equiv \bfxi\pmod{p^{a+1}}$ for some $\bfxi\in \Xi_a^n(\xi)$. We
 define an equivalence relation $\calR(\lam)$ on integral $n$-tuples by declaring $\bfx$ and $\bfy$ to be 
$\calR(\lam)$-equivalent when $\bfx\equiv \bfy\pmod{p^\lam}$. We then write 
$\calC_{a,b}^{n,h}(\bfm;\xi,\eta)$ for the
 set of $\calR(hb)$-equivalence classes of $\calB_{a,b}^n(\bfm;\xi,\eta)$, and we define 
$B_{a,b}^{n,h}(p)$ by putting
\begin{equation}\label{3.2}
B_{a,b}^{n,h}(p)=\max_{1\le \xi\le p^a}
\max_{\substack{1\le \eta\le p^b\\ \eta\not\equiv \xi\mmod{p}}}
\max_{1\le \bfm\le p^{kb}}\text{card}(\calC_{a,b}^{n,h}(\bfm;\xi,\eta)).
\end{equation}

\par When $a=0$ we modify these definitions, so that $\calB_{0,b}^n(\bfm;\xi,\eta)$ denotes the set of
 solutions of the system of congruences (\ref{3.1}) with $1\le \bfz\le p^{kb}$ and 
$\bfz\equiv \bfxi\mmod{p}$ for some $\bfxi\in \Xi_0^n(\xi)$, and for which in addition 
$\bfz\not\equiv \eta\mmod{p}$. As in the situation in which one has $a\ge 1$, we write 
$\calC_{0,b}^{n,h}(\bfm;\xi,\eta)$ for the set of $\calR(hb)$-equivalence classes of 
$\calB_{0,b}^n(\bfm;\xi,\eta)$, but we define $B_{0,b}^{n,h}(p)$ by putting
$$B_{0,b}^{n,h}(p)=\max_{1\le \eta\le p^b}\max_{1\le \bfm\le p^{kb}}
\text{card}(\calC_{0,b}^{n,h}(\bfm;0,\eta)).$$
We note that the choice of $\xi$ in this situation with $a=0$ is irrelevant.\par

The next lemma records the two estimates for $B_{a,b}^{n,h}(p)$ of use in the regimes of interest to us.

\begin{lemma}\label{lemma3.1} Let $a$ and $b$ be integers with $0\le a<b$ and $b\ge (r-1)a$. Then
\begin{equation}\label{3.3}
B_{a,b}^{k-1,k-r}(p)\le k!p^{\mu b+\nu a},
\end{equation}
where
\begin{equation}\label{3.4}
\mu=\tfrac{1}{2}(k-r-1)(k-r-2)\quad \text{and}\quad \nu=\tfrac{1}{2}(k-r-1)(k+r-2).
\end{equation}
In addition, subject to the additional hypothesis $1\le j\le r$, one has
\begin{equation}\label{3.5}
B_{a,b}^{r-j+1,k-r+j}(p)\le k!p^{(r-j)a}.
\end{equation}
\end{lemma}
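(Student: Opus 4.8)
The plan is to estimate the cardinalities of the sets $\calC_{a,b}^{n,h}(\bfm;\xi,\eta)$ by reducing, via the translation $z\mapsto z-\eta$ and the Binomial Theorem, to counting solutions of a system of congruences of the shape $\sum_i z_i^j\equiv m_j'\mmod{p^{jb}}$ $(1\le j\le k)$ with variables lying in prescribed distinct residue classes modulo $p^{a+1}$, and then counting such solutions modulo $p^{hb}$. The key mechanism is the classical one underlying all efficient congruencing: given a solution modulo the top modulus, the difference of any two solutions satisfies the linearised system obtained by differencing, and a Vandermonde-type determinant argument (using that the base residues are distinct modulo $p$) forces the components of the difference to be highly divisible by $p$. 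Concretely, for \eqref{3.3} one works with the $(k-1)$-variable system at congruencing level $k-r$, so that one is counting $\calR((k-r)b)$-classes; for \eqref{3.5} one works with the $(r-j+1)$-variable system at level $k-r+j$.

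For \eqref{3.3}: fix one solution $\bfz^{(0)}\in \calB_{a,b}^{k-1,k-r}(\bfm;\xi,\eta)$; any other solution $\bfz$ in the same or another $\calR((k-r)b)$-class satisfies, after subtracting and expanding powers, a system of the form $\sum_{i=1}^{k-1} c_i (z_i - z_i^{(0)}) \equiv 0 \mmod{p^{jb}}$ for each $j$, with coefficients built from the $z_i^{(0)}$. Because the $z_i^{(0)}$ are distinct modulo $p$, the associated $(k-1)\times(k-1)$ minors are coprime to $p$, and Cramer's rule yields that $z_i\equiv z_i^{(0)}\mmod{p^{?}}$ for a power determined by the spacing of the moduli $p^{jb}$; counting residues modulo $p^{(k-r)b}$ subject to this then gives the count $p^{\mu b + \nu a}$ after bookkeeping the exponents, with $\mu,\nu$ as in \eqref{3.4}. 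One expects this to be exactly \cite[Lemma 4.3]{Woo2012a} or its analogue in \cite{Woo2013}, possibly with a relabelling of parameters; the factor $k!$ absorbs the choice of ordering of the variables. The hypothesis $b\ge (r-1)a$ is precisely what is needed to guarantee that the divisibility extracted dominates the conditioning modulus $p^{a+1}$ at each stage.

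For \eqref{3.5}: here there are $r-j+1$ variables and the congruencing level is $k-r+j$, which together with $1\le j\le r$ gives $(r-j+1) + (k-r+j) = k+1 > k$ — one more variable-plus-level than the degree, which is the ``non-singular'' regime where the solution modulo the top power is essentially determined by the conditioning class. In this regime the Vandermonde argument pins down each $z_i$ modulo $p^{(k-r+j)b}$ up to its residue modulo $p^{a+1}$, so the number of $\calR((k-r+j)b)$-classes is at most the number of admissible tuples of residues modulo $p^{a+1}$ in distinct classes mod $p^{a+1}$ lying above $\xi$, which is $\le p^{(r-j)a}$ (there are $r-j+1$ variables, one degree of freedom being removed), times $k!$ for the ordering. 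The main obstacle, and the step demanding the most care, is getting the exponents in \eqref{3.4} exactly right and verifying that the two inequalities are genuine specialisations of the lemmas in \cite{Woo2012a, Woo2013, FW2013} rather than merely morally equivalent — in particular, checking the $a=0$ modification (where the extra condition $\bfz\not\equiv\eta\mmod p$ is in force and the choice of $\xi$ is irrelevant) reproduces the same bounds, and confirming that the hypothesis $b\ge(r-1)a$ suffices uniformly across $1\le j\le r$ for \eqref{3.5}.
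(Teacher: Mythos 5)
Your strategy is the right one and matches the paper's: the lemma is proved by specialising a general auxiliary-congruence estimate from the earlier efficient congruencing papers, namely \cite[Lemma 3.3]{FW2013}, which asserts that under the hypotheses (\ref{3.6}) and (\ref{3.7}) one has $B_{a,b}^{R,T}(p)\le k!p^{\mu'b+\nu'a}$ with $\mu'=\tfrac{1}{2}(T+R-k)(T+R-k-1)$ and $\nu'=\tfrac{1}{2}(T+R-k)(k+R-T-1)$. The paper's proof consists precisely of taking $(R,T)=(k-1,k-r)$ for (\ref{3.3}) and $(R,T)=(r-j+1,k-r+j)$ for (\ref{3.5}), checking that the constraint (\ref{2.4}) on $r$ guarantees (\ref{3.6}) and that $b\ge(r-1)a$ covers (\ref{3.7}) in both cases, and evaluating the exponents. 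However, your proposal stops short of being a proof: you never identify the precise source lemma, never verify its hypotheses for the two parameter choices, and explicitly defer the exponent computation as ``bookkeeping''. That computation is the entire content of the lemma; carrying it out gives $T+R-k=k-r-1$ in the first case, whence $\mu'=\tfrac12(k-r-1)(k-r-2)$ and $\nu'=\tfrac12(k-r-1)(k+r-2)$, and $T+R-k=1$ in the second, whence $\mu'=0$ and $\nu'=r-j$.

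There is also a concrete error in your heuristic for (\ref{3.5}). You argue that in the ``non-singular'' regime $R+T=k+1$ the number of $\calR((k-r+j)b)$-classes is bounded by the number of admissible tuples of residues modulo $p^{a+1}$ above $\xi$, ``which is $\le p^{(r-j)a}$''. That count is wrong: each of the $r-j+1$ variables has at most $p$ admissible residues modulo $p^{a+1}$ lying over $\xi$ modulo $p^a$, so this reasoning yields a bound of the shape $p^{O(r-j)}$ with no factor of $a$ in the exponent. The factor $a$ in $p^{(r-j)a}$ arises from the lifting mechanism internal to \cite[Lemma 3.3]{FW2013} (it is $\nu'a$ with $\nu'=r-j$), not from counting conditioning classes, so your derivation does not actually produce the stated estimate. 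You should replace both heuristics with the explicit appeal to \cite[Lemma 3.3]{FW2013} and the parameter verification described above.
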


\begin{proof} We apply \cite[Lemma 3.3]{FW2013}. Thus, provided that $k$, $R$ and $T$ satisfy
\begin{equation}\label{3.6}
k\ge 2,\quad \max\{2,\tfrac{1}{2}(k-1)\}\le T\le k,\quad 1\le R\le k\quad \text{and}\quad R+T\ge k,
\end{equation}
and in addition
\begin{equation}\label{3.7}
0\le a<b\quad \text{and}\quad b\ge (k-T-1)a,
\end{equation}
one has
$$B_{a,b}^{R,T}(p)\le k!p^{\mu'b+\nu'a},$$
where
\begin{equation}\label{3.8}
\mu'=\tfrac{1}{2}(T+R-k)(T+R-k-1)\quad \text{and}\quad \nu'=\tfrac{1}{2}(T+R-k)(k+R-T-1).
\end{equation}
For the first conclusion of the lemma, we take $R=k-1$ and $T=k-r$, noting that the condition (\ref{2.4})
 ensures that $T\ge \max\{ \frac{1}{2}(k-1),2\}$ and $R+T\ge k+1$. Thus, subject to the conditions 
$0\le a<b$ and $b\ge (r-1)a$ imported from (\ref{3.7}), one obtains the bound (\ref{3.3}) by computing
 the exponents $\mu'$ and $\nu'$ given by (\ref{3.8}). For the second conclusion, we take $R=r-j+1$ and 
$T=k-r+j$. Here, the conditions imposed by (\ref{3.6}) are easily verified. In this case, the constraints 
(\ref{3.7}) are satisfied when $1\le j\le r$ provided that $0\le a<b$ and $b\ge (r-1)a$, and so the desired 
conclusion (\ref{3.5}) again follows by evaluating the exponents delivered by (\ref{3.8}).
\end{proof}
 
\section{The conditioning process} The mean value $K_{a,b}^m(X;\xi,\eta)$ differs from the analogue 
previously employed in efficient congruencing methods, and thus we must discuss the conditioning process 
in some detail. Our goal will now be to replace a factor $\grf_b(\bfalp;\eta)^{2k-2}$ occurring in 
(\ref{2.14}) by the conditioned factor $\grF_b^{k-1}(\bfalp;\eta)^2$ in (\ref{2.15}).

\begin{lemma}\label{lemma4.1} Let $a$ and $b$ be integers with $b>a\ge 1$. Then one has
$$I_{a,b}^{k-1}(X)\ll K_{a,b}^{k-1}(X)+M^{2s/3}I_{a,b+1}^{k-1}(X).$$
\end{lemma}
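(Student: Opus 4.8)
The plan is to run the standard conditioning dichotomy on the factor $\grf_b(\bfalp;\eta)^{2k-2}$ appearing in the definition \eqref{2.14} of $I_{a,b}^{k-1}(X;\xi,\eta)$. Fix admissible $\xi,\eta$ with $\eta\not\equiv\xi\mmod p$, and recall that $I_{a,b}^{k-1}(X;\xi,\eta)$ counts solutions of \eqref{2.16} with $m=k-1$, in which $\bfv,\bfw\equiv\eta\mmod{p^b}$. Write each of the $2k-2$ variables $v_l,w_l$ in the form $\eta+p^b\zeta$ with $1\le\zeta\le X/p^b$; the key point is to separate these variables according to whether the residues $[\zeta\mmod p]$ they produce are ``well spread'' or ``concentrated''. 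Concretely, on the exponential sum side, $\grf_b(\bfalp;\eta)=\sum_{\zeta}e(\Psi_\eta(p^b\zeta;\bfalp))$ decomposes over the $p$ residue classes of $\zeta$ modulo $p$, i.e.\ over residue classes of $v$ modulo $p^{b+1}$; grouping the $k-1$ indices $l$ that occur in one copy of $\grf_b(\bfalp;\eta)^{k-1}$, either no $p$-spacing obstruction occurs and we may extract a conditioned block $\grF_b^{k-1}(\bfalp;\eta)$, or there is a coincidence of residues modulo $p^{b+1}$ among these $k-1$ variables, which forces (after relabelling) at least two of them to lie in the same class $\mmod{p^{b+1}}$.

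In the first case one arrives, via a triangle-inequality / Cauchy--Schwarz bookkeeping on the $2s$-fold product and the fact that each $\grf_b(\bfalp;\eta)^{k-1}$ either is replaced by $\grF_b^{k-1}(\bfalp;\eta)$ up to the combinatorial factor counting the well-conditioned tuples, at a mean value majorised by $K_{a,b}^{k-1}(X;\xi,\eta)$ as in \eqref{2.15}: the factors $\grF_a^{k-1}(\bfalp;\xi)^2$ are untouched, one pair $\grf_b(\bfalp;\eta)^{k-1}\grf_b(\bfalp;\eta)^{k-1}$ becomes $\grF_b^{k-1}(\bfalp;\eta)^2$, and the remaining $\grf_b(\bfalp;\eta)^{2s-2m}$ with $m=k-1$ is exactly what is left. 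This gives the contribution $\ll K_{a,b}^{k-1}(X)$. In the second case, the coincidence modulo $p^{b+1}$ means the offending variables may be written with $v\equiv\eta\mmod{p^{b+1}}$ (after absorbing the common residue into $\eta$, using translation-dilation invariance of \eqref{1.1} as encoded in Lemmas \ref{lemma2.1} and \ref{lemma2.2}), so the relevant sub-sum is controlled by $\grf_{b+1}(\bfalp;\eta')$ for a suitable $\eta'$; estimating the total number of such ``bad'' residue choices trivially by $p\ll M$ for each of a bounded number of extracted variables, and applying Hölder's inequality to restore the mean value to the shape \eqref{2.14} with $b$ replaced by $b+1$, yields a contribution $\ll M^{c}I_{a,b+1}^{k-1}(X)$ for some explicit constant $c$. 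Choosing the Hölder exponents so that exactly the right power of $M$ appears — and checking that this power is at most $2s/3$ — produces the claimed term $M^{2s/3}I_{a,b+1}^{k-1}(X)$.

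I expect the main obstacle to be the second (``concentrated'') case: one must argue that a coincidence among $k-1$ variables modulo $p^{b+1}$, after the translation to recentre at the common residue, genuinely allows all the concentrated variables to be packaged into a single $\grf_{b+1}$, and then verify that the combinatorial loss from the number of such residue classes, combined with the cost of the Hölder split needed to recover a mean value of the form $I_{a,b+1}^{k-1}(X)$ (rather than some hybrid), is no worse than $M^{2s/3}$. This is precisely where the inequality $r\le\tfrac12(k+1)$ from \eqref{2.4}, and the relation $\grw=s+k-1$, enter to keep the bookkeeping exponent under control; the rest is the routine orthogonality/Hölder manoeuvring already standard in \cite{Woo2012a} and \cite{FW2013}, here with $m=k-1$ held fixed so that the extra flexibility of varying $m$ built into \eqref{2.15} is not yet needed.
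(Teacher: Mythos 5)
Your overall architecture is the right one (a dichotomy on the residues of the $\grf_b(\bfalp;\eta)$ variables modulo $p^{b+1}$, with the well-spaced case feeding into $K_{a,b}^{k-1}(X)$ and the concentrated case feeding into $I_{a,b+1}^{k-1}(X)$), but two of your key steps do not deliver the stated bound. First, your concentrated case is too weak: a ``coincidence'' forcing only \emph{two} variables into the same class modulo $p^{b+1}$ yields, after the H\"older split you describe, a bound of the shape $I\ll M\,I^{1-2/(2s)}I_{b+1}^{2/(2s)}$ and hence $I\ll M^{s}I_{b+1}$, not $M^{2s/3}I_{b+1}$. The paper's dichotomy is instead on whether the $s$ variables $v_1,\ldots,v_s$ (note: there are $2(\grw-(k-1))=2s$ such variables, not $2k-2$) occupy at least $k-1$ or at most $k-2$ distinct classes modulo $p^{b+1}$; in the latter case the hypothesis $s>2(k-1)$ (which follows from $s\ge s_\iota(\nu)$ --- the constraint $r\le\tfrac12(k+1)$ you cite is not what is doing the work here) guarantees by pigeonhole that \emph{three} variables share a class, and the resulting H\"older exponent $3/(2s)$ together with the factor $p\ll M$ from summing over the admissible classes $\zet\equiv\eta\mmod{p^b}$ is exactly what produces $M^{2s/3}$ after disentangling. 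No recentring or translation-dilation invariance is needed for this step: one simply writes $\grf_b(\bfalp;\eta)=\sum_{\zet}\grf_{b+1}(\bfalp;\zet)$ over the classes $\zet\equiv\eta\mmod{p^b}$.

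Second, your well-spaced case asserts that ``one pair $\grf_b^{k-1}\grf_b^{k-1}$ becomes $\grF_b^{k-1}(\bfalp;\eta)^2$,'' i.e.\ that both the $\bfv$-block and the $\bfw$-block can be conditioned simultaneously. The spacing hypothesis applies only to the $v_i$, so one can extract a single factor $\grF_b^{k-1}(\bfalp;\eta)$ (times $\binom{s}{k-1}$), leaving an asymmetric integrand $|\grF_a^{k-1}|^2\grF_b^{k-1}\grf_b^{s-k+1}\grf_b(-\bfalp)^s$; an application of Cauchy--Schwarz then bounds this by $(K_{a,b}^{k-1})^{1/2}(I_{a,b}^{k-1})^{1/2}$, and the term $K_{a,b}^{k-1}(X)$ in the lemma only emerges after the final step in which one divides through by the appropriate power of $I_{a,b}^{k-1}(X)$ in the combined inequality $I\ll K^{1/2}I^{1/2}+M\,I^{1-3/(2s)}I_{b+1}^{3/(2s)}$. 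As written, your argument neither justifies the doubled conditioning nor performs this disentangling, and the exponent $2s/3$ is asserted rather than derived.
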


\begin{proof} Consider fixed integers $\xi$ and $\eta$ with $\eta\not\equiv \xi\mmod{p}$. Let $T_1$ 
denote the number of integral solutions $\bfx$, $\bfy$, $\bfv$, $\bfw$ of the system (\ref{2.16}) counted 
by $I_{a,b}^{k-1}(X;\xi,\eta)$ in which $v_1,\ldots,v_s$ together occupy at least $k-1$ distinct residue 
classes modulo $p^{b+1}$, and let $T_2$ denote the corresponding number of solutions in which these 
integers together occupy at most $k-2$ distinct residue classes modulo $p^{b+1}$. Then
\begin{equation}\label{4.1}
I_{a,b}^{k-1}(X;\xi,\eta)=T_1+T_2.
\end{equation}

\par We first estimate $T_1$. Recall the definitions (\ref{2.13}), (\ref{2.14}) and (\ref{2.15}). Then by 
orthogonality and an application of H\"older's inequality, one finds that
\begin{align}
T_1&\le \binom{s}{k-1}\oint |\grF_a^{k-1}(\bfalp;\xi)|^2
\grF_b^{k-1}(\bfalp;\eta)\grf_b(\bfalp;\eta)^{s-k+1}\grf_b(-\bfalp;\eta)^s\d\bfalp \notag \\
&\ll \left( K_{a,b}^{k-1}(X;\xi,\eta)\right)^{1/2}\left( I_{a,b}^{k-1}(X;\xi,\eta)\right)^{1/2}.\label{4.2}
\end{align}
Next we estimate $T_2$. In view of (\ref{2.4}) and (\ref{2.6}), one may confirm that the implicit 
hypothesis $s\ge s_\iota (\nu)$ ensures that $s>2(k-1)$. Consequently, there is an integer 
$\zet\equiv \eta \mmod{p^b}$ having the property that three at least of the variables $v_1,\ldots ,v_s$ 
are congruent to $\zet$ modulo $p^{b+1}$. Hence, again recalling the definitions (\ref{2.13}) and 
(\ref{2.14}), one finds by orthogonality in combination with H\"older's inequality that
\begin{align}
T_2&\le \binom{s}{3}\sum_{\substack{1\le \zet\le p^{b+1}\\ \zet\equiv \eta\mmod{p^b}}}
\oint |\grF_a^{k-1}(\bfalp;\xi)|^2\grf_{b+1}(\bfalp;\zet)^3\grf_b(\bfalp;\eta)^{s-3}
\grf_b(-\bfalp;\eta)^s\d\bfalp \notag \\
&\ll M\max_{\substack{1\le \zet \le p^{b+1}\\ \zet\equiv \eta \mmod{p^b}}}
(I_{a,b}^{k-1}(X;\xi,\eta))^{1-3/(2s)}(I_{a,b+1}^{k-1}(X;\xi,\zet))^{3/(2s)}.\label{4.3}
\end{align}

\par By substituting (\ref{4.2}) and (\ref{4.3}) into (\ref{4.1}), and recalling (\ref{2.19}) and
 (\ref{2.20}), we therefore conclude that
$$I_{a,b}^{k-1}(X)\ll (K_{a,b}^{k-1}(X))^{1/2}(I_{a,b}^{k-1}(X))^{1/2}+
M(I_{a,b}^{k-1}(X))^{1-3/(2s)}(I_{a,b+1}^{k-1}(X))^{3/(2s)},$$
whence
$$I_{a,b}^{k-1}(X)\ll K_{a,b}^{k-1}(X)+M^{2s/3}I_{a,b+1}^{k-1}(X).$$
This completes the proof of the lemma.
\end{proof}

Repeated application of Lemma \ref{lemma4.1}, together with a trivial bound for the mean value 
$K_{a,b+H}^{k-1}(X)$ when $H$ is large enough, yields a relation suitable for our iterative process.

\begin{lemma}\label{lemma4.2} Let $a$ and $b$ be integers with $1\le a<b$, and put $H=15(b-a)$. 
Suppose that $b+H\le (2\tet)^{-1}$. Then there exists an integer $h$ with $0\le h<H$ having the 
property that
$$I_{a,b}^{k-1}(X)\ll (M^h)^{2s/3}K_{a,b+h}^{k-1}(X)+(M^H)^{-s/4}(X/M^b)^{2s}
(X/M^a)^{\lam-2s}.$$
\end{lemma}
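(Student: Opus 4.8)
The plan is to iterate Lemma \ref{lemma4.1} starting from $I_{a,b}^{k-1}(X)$, peeling off a power of $M^{2s/3}$ at each stage while the auxiliary exponent $b$ increases by $1$. Concretely, set $b_0=b$ and apply Lemma \ref{lemma4.1} with the pair $(a,b_0)$ to obtain
$$I_{a,b}^{k-1}(X)\ll K_{a,b}^{k-1}(X)+M^{2s/3}I_{a,b+1}^{k-1}(X),$$
then feed the second term back into Lemma \ref{lemma4.1} with the pair $(a,b+1)$, and so on. Provided that at each step the hypothesis $b+j+1\le(2\tet)^{-1}$ of Lemma \ref{lemma4.1} is met — which is guaranteed by the assumption $b+H\le(2\tet)^{-1}$, since we will only iterate $j<H$ times — we reach after $H$ steps a bound of the shape
$$I_{a,b}^{k-1}(X)\ll \sum_{h=0}^{H-1}(M^h)^{2s/3}K_{a,b+h}^{k-1}(X)+(M^{H})^{2s/3}I_{a,b+H}^{k-1}(X).$$
Here the telescoping is entirely routine: the only point to check is that the accumulated implied constant stays harmless, which it does because $H=15(b-a)$ is bounded in terms of $k$, $s$, $N$ alone.

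The crux is then to show that the final term $(M^{H})^{2s/3}I_{a,b+H}^{k-1}(X)$ is dominated by the displayed error term $(M^{H})^{-s/4}(X/M^b)^{2s}(X/M^a)^{\lam-2s}$, i.e. to produce a \emph{trivial} bound for $I_{a,b+H}^{k-1}(X)$ that beats the large positive power $(M^H)^{2s/3}$. For this I would argue directly from the Diophantine interpretation (\ref{2.16}) of $I_{a,b+H}^{k-1}(X;\xi,\eta)$: the variables $\bfv,\bfw$ are constrained to lie in fixed residue classes modulo $p^{b+H}$, so each ranges over at most $X/M^{b+H}+1\ll X/M^{b+H}$ values (using $(b+H)\tet\le 1$ so that $p^{b+H}\le(2M)^{b+H}$ is comfortably smaller than $X$), while $\bfx,\bfy$ are constrained only modulo $p^{a+1}$. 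Bounding the $2(\grw-(k-1))=2s$ variables $\bfv,\bfw$ trivially and the remaining $2(k-1)$ variables $\bfx,\bfy$ by Hölder's inequality together with Lemma \ref{lemma2.1} and the upper bound (\ref{2.11}) for $J_\grw$, one gets something like
$$I_{a,b+H}^{k-1}(X)\ll (X/M^{b+H})^{2s}(X/M^{a})^{\lam-2s+\eps}$$
(the loss going into $a$ rather than $b+H$ because that is where the $\bfx,\bfy$ variables are relatively unconstrained). Comparing exponents of $M$, the right-hand side carries an extra factor $M^{-2s(b+H-b)}=M^{-2sH}$ relative to $(X/M^b)^{2s}(X/M^a)^{\lam-2s}$, and since
$$(M^H)^{2s/3}\cdot M^{-2sH}=(M^H)^{-4s/3}\le(M^H)^{-s/4},$$
the final term is absorbed into the claimed error. (The $\eps$-loss is swallowed by the margin, since $M=X^\tet$ and $H\tet$ is bounded away from the exponent gap.)

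The step I expect to be the main obstacle is getting the \emph{trivial} estimate for $I_{a,b+H}^{k-1}(X)$ with the saving attached to the right variable and with a clean enough power of $M$ — one has to be careful that Hölder is applied so that the $2(k-1)$ unconditioned variables contribute a $J_\grw$-type factor at scale $X/M^a$ (controlled by (\ref{2.11})) rather than at scale $X$, and that no stray factor of $p^{(\text{something})b}$ creeps in from the conditioning set $\Xi_a^{k-1}(\xi)$. Once the bookkeeping of the exponents of $X$ and $M$ is done correctly, the comparison $(M^H)^{2s/3}M^{-2sH}\le(M^H)^{-s/4}$ is immediate, and the choice of the numerical constants $15$ and $\tfrac14$ in the statement is precisely what makes this comparison hold with room to spare. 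Finally, among the $H$ surviving main terms $(M^h)^{2s/3}K_{a,b+h}^{k-1}(X)$, $0\le h<H$, one simply selects the index $h$ for which this quantity is largest, which yields the asserted dichotomy with that single value of $h$.
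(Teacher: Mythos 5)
Your first step---iterating Lemma \ref{lemma4.1} to reach
$$I_{a,b}^{k-1}(X)\ll\sum_{h=0}^{H-1}(M^h)^{2s/3}K_{a,b+h}^{k-1}(X)+(M^H)^{2s/3}I_{a,b+H}^{k-1}(X)$$
and then selecting the maximal summand---is exactly the paper's argument. The gap lies in your treatment of the tail term. The intermediate estimate you claim, namely $I_{a,b+H}^{k-1}(X)\ll(X/M^{b+H})^{2s}(X/M^{a})^{\lam-2s+\eps}$, cannot be obtained by the route you describe: since $\lam-2s\le 2\grw-2s=2k-2$ is not the issue but rather $\lam-2s$ is typically negative (indeed $\lam$ can be as small as $\grw=s+k-1$, so $\lam-2s\le k-1-s<0$), the factor $(X/M^a)^{\lam-2s}$ is smaller than $1$ and cannot arise as a count of the variables $\bfx,\bfy$ once $\bfv,\bfw$ have been fixed trivially (that count is at least $1$ on the diagonal). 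What H\"older actually yields here---this is Lemma \ref{lemma2.2} applied to $\oint|\grf_a(\bfalp;\xi)^{2k-2}\grf_{b+H}(\bfalp;\eta)^{2s}|\d\bfalp$, which is how the paper proceeds---is, together with (\ref{2.11}),
$$I_{a,b+H}^{k-1}(X)\ll(X/M^a)^{(k-1)(\lam+\del)/\grw}(X/M^{b+H})^{s(\lam+\del)/\grw},$$
and this exceeds your claimed bound by a factor of order $M^{s(2-\lam/\grw)(b+H-a)}$, which is genuinely large because a priori one only knows $\grw\le\lam\le 2\grw$. Your claimed estimate is therefore not a weakening but a strengthening of what the method gives; it can even fall below the diagonal contribution $\gg(X/M^{a+1})^{k-1}(X/M^{b+H})^{s}$ when $b+H$ is near $(2\tet)^{-1}$, so it is not merely unproved but false in general.

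The symptom of this error is the enormous ``room to spare'' you observe: your comparison $(M^H)^{2s/3}M^{-2sH}=(M^H)^{-4s/3}\le(M^H)^{-s/4}$ holds for every $H\ge 1$ and makes no use whatsoever of $H=15(b-a)$, which should have been a warning. With the correct H\"older bound, the exponent of $M$ in the ratio of $(M^H)^{2s/3}I_{a,b+H}^{k-1}(X)$ to $(X/M^b)^{2s}(X/M^a)^{\lam-2s}$ works out (discarding the harmless $\del$ contributions) to
$$sH\Bigl(\tfrac{2}{3}-\tfrac{\lam}{\grw}\Bigr)+s(b-a)\Bigl(2-\tfrac{\lam}{\grw}\Bigr)\le-\tfrac{1}{3}sH+s(b-a),$$
using only $\lam\ge\grw$; for this to be at most $-\tfrac{1}{4}sH$ one needs $H\ge 12(b-a)$, and the choice $H=15(b-a)$ leaves just enough slack (a factor $M^{-s(b-a)/4}$) to absorb the $X^{O(\del)}$ losses from (\ref{2.11}). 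This delicate computation, carried out in the argument completing the proof of \cite[Lemma 4.2]{FW2013} and invoked by the paper at this point, is what your proposal is missing.
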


\begin{proof} By repeated application of Lemma \ref{lemma4.1}, we obtain the upper bound
\begin{equation}\label{4.4}
I_{a,b}^{k-1}(X)\ll \sum_{h=0}^{H-1}(M^h)^{2s/3}K_{a,b+h}^{k-1}(X)+
(M^H)^{2s/3}I_{a,b+H}^{k-1}(X).
\end{equation}
On considering the underlying Diophantine systems, it follows from Lemma \ref{lemma2.2} that, 
uniformly in $\xi$ and $\eta$, one has
\begin{align*}
I_{a,b+H}^{k-1}(X;\xi,\eta)&\le \oint|\grf_a(\bfalp;\xi)^{2k-2}\grf_{b+H}(\bfalp;\eta)^{2s}|\d\bfalp \\
&\ll \left(J_\grw(X/M^a)\right)^{(k-1)/\grw}\left(J_\grw(X/M^{b+H})\right)^{1-(k-1)/\grw}.
\end{align*}
The argument completing the proof of \cite[Lemma 4.2]{FW2013} now applies, delivering the estimate
$$(M^H)^{2s/3}I_{a,b+H}^{k-1}(X)\ll (M^H)^{-s/4}(X/M^b)^{2s}(X/M^a)^{\lam-2s},$$
and the conclusion of the lemma follows on substituting this bound into (\ref{4.4}).
\end{proof}

\section{The precongruencing step} It is necessary to configure the variables in the initial step of our 
iteration so that subsequent iterations are not impeded. Here we are able to make use of our earlier work 
\cite[\S6]{Woo2013} and \cite[\S6]{FW2013} concerning precongruencing steps so as to abbreviate the 
discussion, despite our present alteration of the definition of $K_{a,b}^m(X)$ relative to its earlier 
analogues.

\begin{lemma}\label{lemma5.1}
There exists a prime number $p$ with $M<p\le 2M$, and an integer $h$ with $h\in \{ 0,1,2,3\}$, for
 which one has
$$J_\grw (X)\ll M^{2s+2sh/3}K_{0,1+h}^{k-1}(X).$$
\end{lemma}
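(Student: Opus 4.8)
The plan is to start from the orthogonality identity $J_\grw(X) = \oint |f_k(\bfalp;X)|^{2\grw}\d\bfalp$ and localise the variables into residue classes modulo $p$. Writing $f_k(\bfalp;X) = \sum_{\xi=1}^p \grf_1(\bfalp;\xi)$, one would partition the $2\grw$ variables according to how many distinct residue classes modulo $p$ they occupy. Since $\grw = s+k-1$, a solution of \eqref{1.1} with $\bfx,\bfy$ of length $\grw$ either has at least $k-1$ of the $x_i$ (or at least $k-1$ of the $y_i$) occupying distinct classes modulo $p$, or else both $\bfx$ and $\bfy$ are heavily concentrated in few classes. In the latter, diagonally dominated, case a trivial bound together with translation-dilation invariance (Lemma \ref{lemma2.1}) should yield a contribution that is negligible against $J_\grw(X) \gg X^{\lam-\del}$, because the hypothesis $s \ge s_\iota(\nu)$ forces $\grw$ to be large enough relative to $k$; this is exactly the sort of pigeonhole-plus-trivial estimate carried out in \cite[\S6]{Woo2013} and \cite[\S6]{FW2013}.

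In the main case, after an application of H\"older's inequality to peel off $k-1$ well-conditioned variables (those lying in distinct classes modulo $p$, which is where $\grF^m$ and the set $\Xi^m(\bfzet)$ enter), one arrives at a mean value of the shape $\oint |\grF^{k-1}(\bfalp;\bfzet)|^2 |\grf_1(\bfalp;\eta)|^{2\grw-2(k-1)}\d\bfalp$ for suitable $\eta \not\equiv \xi \pmod p$, times acceptable powers of $M$ coming from the number of choices of residue classes (each such choice costs a factor $O(M)$, hence the $M^{2s}$). To bring this into the form $K_{0,1+h}^{k-1}(X)$, one needs to further process the $\grf_1(\bfalp;\eta)$ factor: a conditioning step analogous to Lemma \ref{lemma4.1}, iterated a bounded number of times, replaces $\grf_1(\bfalp;\eta)^{2k-2}$ by the conditioned $\grF_{1+h}^{k-1}(\bfalp;\eta)^2$ while pushing the congruence level from $1$ up to $1+h$ with $h\in\{0,1,2,3\}$, at the cost of a factor $M^{2sh/3}$; the tail term from that iteration is again absorbed by the trivial bound once $h$ reaches $3$ or $4$, using $s > 2(k-1)$. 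Finally one fixes the prime $p$ achieving the maximum (or rather, observes the estimate holds for some $p\in(M,2M]$), which is legitimate since the left side $J_\grw(X)$ does not depend on $p$.

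The main obstacle is bookkeeping the exponents of $M$ through the H\"older step and the short conditioning iteration so that the total is exactly $2s + 2sh/3$ and no larger, and simultaneously checking that the diagonal/concentrated contributions are genuinely dominated — this requires the inequality $s \ge s_\iota(\nu)$ to be used in the form $\grw \ge$ (something like) $2k$, together with $X^{\del} < M^{1/N}$ from \eqref{2.9} so that the powers of $M$ incurred are small compared to the $X^{\lam-\del}$ lower bound on $J_\grw(X)$. Since the definition of $K_{a,b}^m$ here differs from the earlier analogues only in the presence of the extra block $\grf_b(\bfalp;\eta)^{2s-2m}$ (with the $\grF_b^{k-1}$ and $\grf_b^{2s-2m}$ both attached at level $b$), the argument of \cite[\S6]{FW2013} transfers essentially verbatim once one tracks this block through the H\"older inequality; I would organise the proof to make that parallel explicit and invoke the cited precongruencing lemmas for the routine parts.
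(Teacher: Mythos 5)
Your proposal follows essentially the same route as the paper: a precongruencing step (residue-class decomposition plus H\"older, costing the factor $M^{2s}$ and yielding $\Itil_1^{k-1}$, with the concentrated contribution absorbed via Lemma \ref{lemma2.1} against the extremal lower bound (\ref{2.10})), followed by a bounded conditioning iteration in the style of Lemma \ref{lemma4.1} that converts $\Itil_1^{k-1}$ into $\Ktil_{1+h}^{k-1}$ at cost $M^{2sh/3}$, the tail being defeated again by (\ref{2.10}). The paper's own proof is exactly this, executed largely by citation to \cite[\S 6]{FW2013}.
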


\begin{proof} The argument of the proof of \cite[Lemma 6.1]{FW2013}, leading via equation (6.2) to 
equation (6.3) of that paper, shows that there is a prime number $p$ with $M<p\le 2M$ for which
$$J_\grw(X)\ll p^{2s}\max_{1\le \eta\le p}\Itil_1^{k-1}(X;\eta).$$
By modifying the argument of the proof of \cite[Lemma 6.1]{FW2013} leading to equation (6.6) of that 
paper, along the lines easily surmised from our proof of Lemma \ref{lemma4.1} above, one finds that
\begin{equation}\label{5.1}
\Itil_c^{k-1}(X;\eta)\ll \Ktil_c^{k-1}(X;\eta)+M^{2s/3}\max_{1\le \zet\le p^{c+1}}
\Itil_{c+1}^{k-1}(X;\zet).
\end{equation}

\par Next, we iterate (\ref{5.1}) in order to bound $\Itil_1^{k-1}(X;\eta)$, just as in the argument 
concluding the proof of \cite[Lemma 6.1]{FW2013}. In this way, we find either that
\begin{equation}\label{5.2}
J_\grw(X)\ll M^{2s+2sh/3}\max_{1\le \zet\le p^{1+h}}\Ktil_{1+h}^{k-1}(X;\zet)
\end{equation}
for some index $h\in \{0,1,2,3\}$, so that the conclusion of the lemma holds by virtue of the definition 
(\ref{2.22}), or else that
$$J_\grw(X)\ll X^{\lam+\del}M^{-\grw/3}.$$
Thus, on recalling the definition (\ref{2.9}) of $\del$, we find that $J_\grw(X)\ll X^{\lam-2\del}$, 
contradicting the lower bound (\ref{2.10}) whenever $X=X_l$ is sufficiently large. We are therefore forced 
to conclude that the earlier upper bound (\ref{5.2}) holds, and hence the proof of the lemma is complete.
\end{proof}

It is at this point that we fix the prime number $p$, once and for all, in accordance with Lemma
 \ref{lemma5.1}.

\section{The efficient congruencing step} We extract congruence information from the mean value 
$K_{a,b}^{k-1}(X)$ in two phases. The reader familiar with earlier efficient congruencing arguments will 
identify significant complications in each phase associated with our (forced) inhomogeneous definition 
(\ref{2.15}) of the mean value $K_{a,b}^{k-1}(X;\xi,\eta)$. Before describing the first phase of the 
efficient congruencing step, in which we relate $K_{a,b}^{k-1}(X)$ to $I_{b,(k-r)b}^{k-1}(X)$ and 
$K_{a,b}^r(X)$, we introduce some additional notation. We define the generating function
\begin{equation}\label{6.1}
\grH_{c,d}^m(\bfalp;\xi)=\sum_{\bfxi\in \Xi_c^m(\xi)}
\sum_{\substack{1\le \bfzet\le p^d\\ \bfzet\equiv \bfxi\mmod{p^{c+1}}}}\prod_{i=1}^m
|\grf_d(\bfalp;\zet_i)|^2,
\end{equation}
adopting the natural convention that $\grH_{c,d}^0(\bfalp;\xi)=1$. For future reference, we note at this
 point that successive applications of H\"older's inequality show that when $\ome$ is a real number with
 $m\ome\ge 1$, then
\begin{align}
\grH_{c,d}^m(\bfalp;\xi)^\ome &\le 
\Biggl( \sum_{\substack{1\le \zet \le p^d\\ \zet \equiv \xi\mmod{p^c}}}
|\grf_d(\bfalp;\zet)|^2\Biggr)^{m\ome } \notag \\
&\le (p^{d-c})^{m\ome -1}\sum_{\substack{1\le \zet \le p^d\\ \zet \equiv \xi\mmod{p^c}}}
|\grf_d(\bfalp;\zet)|^{2m\ome}.\label{6.2}
\end{align}
Finally, we recall the definitions of $\mu$ and $\nu$ from (\ref{3.4}). 

\begin{lemma}\label{lemma6.1} Suppose that $a$ and $b$ are integers with $0\le a<b\le \tet^{-1}$ and
 $b\ge (r-1)a$. Then one has
$$K_{a,b}^{k-1}(X)\ll M^{\mu b+\nu a}
\left( (M^{(k-r)b-a})^sI_{b,(k-r)b}^{k-1}(X)\right)^{\tfrac{k-r-1}{s-r}}
\left( K_{a,b}^r(X)\right)^{\tfrac{s-k+1}{s-r}}.$$
\end{lemma}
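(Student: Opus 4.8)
The plan is to start from the Diophantine interpretation (\ref{2.17}) of $K_{a,b}^{k-1}(X;\xi,\eta)$ and exploit the congruence (\ref{2.18}), which here reads $\sum_{i=1}^{k-1}(x_i-\eta)^j\equiv\sum_{i=1}^{k-1}(y_i-\eta)^j\pmod{p^{jb}}$ for $1\le j\le k$. Fix admissible $\xi,\eta$ with $\eta\not\equiv\xi\pmod p$. For a fixed $\bfy$ counted in the mean value, the tuple $\bfm$ with $m_j=\sum_{i=1}^{k-1}(y_i-\eta)^j$ lies in a fixed residue class modulo the $p^{jb}$, and hence $[\bfx\bmod p^{kb}]$ lies in one of the equivalence classes counted by $\calB_{a,b}^{k-1}(\bfm;\xi,\eta)$. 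Grouping these solutions into $\calR((k-r)b)$-equivalence classes — i.e. recording $[\bfx\bmod p^{(k-r)b}]$ — the number of such classes is bounded by $B_{a,b}^{k-1,k-r}(p)\le k!\,p^{\mu b+\nu a}$ by the first part of Lemma \ref{lemma3.1} (valid since $b\ge(r-1)a$ and $0\le a<b$). So, writing out $K_{a,b}^{k-1}(X)$ via orthogonality as an integral of $|\grF_a^{k-1}(\bfalp;\xi)|^2$ against $\grf_b(\bfalp;\eta)^{2s-2(k-1)}|\grF_b^{k-1}(\bfalp;\eta)|^2$, I would replace the factor $|\grF_a^{k-1}(\bfalp;\xi)|^2$ — after summing over the $\le k!p^{\mu b+\nu a}$ residue classes modulo $p^{(k-r)b}$ — by the conditioned generating object, at the cost of the factor $M^{\mu b+\nu a}$ (using $M<p\le 2M$). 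The upshot is a bound of the shape
$$K_{a,b}^{k-1}(X)\ll M^{\mu b+\nu a}\oint \grH_{b,(k-r)b}^{k-1}(\bfalp;\cdot)\,|\grF_b^{k-1}(\bfalp;\eta)|^2\,|\grf_b(\bfalp;\eta)|^{2s-2(k-1)}\,\d\bfalp,$$
where the $\grH$-factor records the $k-1$ variables now pinned modulo $p^{(k-r)b}$ (this is precisely what (\ref{6.1}) was set up for), and the remaining $s-(k-1)$ pairs $\bfw,\bfz$ together with the $k-1$ pairs $\bfu,\bfv$ contribute the $\grf_b$ and $\grF_b^{k-1}$ factors.

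The second half of the argument is an application of Hölder's inequality to split this integral into a power of $I_{b,(k-r)b}^{k-1}(X)$ and a power of $K_{a,b}^r(X)$. The exponents $\tfrac{k-r-1}{s-r}$ and $\tfrac{s-k+1}{s-r}$ sum to $1$, so the split is legitimate provided each is nonnegative; since $r\le\min\{k-2,\tfrac12(k+1)\}$ forces $k-r-1\ge1$ and the hypothesis $s\ge s_\iota(\nu)$ gives $s\ge k-1>r$, both weights lie in $(0,1)$. I would raise the conditioned factor $\grH_{b,(k-r)b}^{k-1}(\bfalp;\cdot)$ to a power $\ome$ with $(k-1)\ome\ge1$ — which it is, since we will take $\ome=\tfrac{k-r-1}{s-r}\cdot(\text{something})$ and need the hypothesis of (\ref{6.2}) — and use the estimate (\ref{6.2}) to convert $\grH^{k-1}$ raised to the appropriate power into a single diagonal sum of $|\grf_{(k-r)b}(\bfalp;\zet)|$ to a high power, losing only $(p^{(k-r)b-a})^{\text{exponent}}$, which is accounted for by the $(M^{(k-r)b-a})^s$ in the $I$-factor. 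The $I_{b,(k-r)b}^{k-1}$-factor then arises by pairing this diagonal sum against an appropriate number of $\grf_b(\bfalp;\eta)$-factors (matching the definition (\ref{2.14}) with $a\mapsto b$, $b\mapsto(k-r)b$, $m=k-1$), and the complementary $K_{a,b}^r$-factor arises by recognising the remaining integral — in which only $r$ of the original extracted variables survive the Hölder split — as exactly (\ref{2.15}) with $m=r$.

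The main obstacle I expect is bookkeeping the exponents so that the three things line up simultaneously: the power of $\grF_b^{k-1}(\bfalp;\eta)$ must be exactly $2$ in both target mean values, the power of $\grf_b(\bfalp;\eta)$ must come out to $2\grw-2(k-1)$ in the $I$-factor and $2s-2r$ in the $K^r$-factor (recall $\grw=s+k-1$), and the power to which $\grH^{k-1}$ is raised in the Hölder step must be at least $1/(k-1)$ so that (\ref{6.2}) applies — all while the Hölder weights come out to be precisely $\tfrac{k-r-1}{s-r}$ and $\tfrac{s-k+1}{s-r}$. Getting the normalising power of $M$ to collapse to exactly $M^{\mu b+\nu a}\cdot(M^{(k-r)b-a})^{s\cdot\frac{k-r-1}{s-r}}$ requires tracking the $(p^{d-c})^{m\ome-1}$ loss from (\ref{6.2}) against the measure-theoretic normalisations implicit in passing between $\grF$ and $\grf$; this is routine but error-prone, and is the step where I would be most careful. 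Everything else — orthogonality, the class count from Lemma \ref{lemma3.1}, the reduction of $|\grF_a^{k-1}|^2$ to the sum over residue classes — is by now standard in the efficient congruencing framework.
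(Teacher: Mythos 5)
Your overall strategy is the paper's: interpret $K_{a,b}^{k-1}(X;\xi,\eta)$ Diophantinely via (\ref{2.17}), extract the congruences (\ref{2.18}), count $\calR((k-r)b)$-equivalence classes with $B_{a,b}^{k-1,k-r}(p)\le k!p^{\mu b+\nu a}$ from Lemma \ref{lemma3.1} (the paper makes this rigorous by applying Cauchy's inequality to the generating function $\grG_{a,b}(\bfalp;\bfm)$ summed over classes, which is what your ``summing over the residue classes'' step amounts to), pass to the diagonal generating function $\grH$, and finish with H\"older and (\ref{6.2}). One small but consequential slip: the first subscript of $\grH$ must be $a$, not $b$ --- the extracted variables lie in classes of $\Xi_a^{k-1}(\xi)$ modulo $p^{a+1}$ --- and this is precisely what produces the factor $(M^{(k-r)b-a})^s$ (the loss $(p^{(k-r)b-a})^{s-1}$ from (\ref{6.2}) with $c=a$, plus one more power from taking the maximum over $\zet\equiv\xi\mmod{p^a}$; the latter condition, with $\eta\not\equiv\xi\mmod{p}$, is also what guarantees $\zet\not\equiv\eta\mmod{p}$ so that the maximum defining $I_{b,(k-r)b}^{k-1}(X)$ in (\ref{2.19}) is applicable).

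The genuine gap is in the H\"older step, which as you describe it would not deliver the stated bound. Raising the single object $\grH_{a,(k-r)b}^{k-1}(\bfalp;\xi)$ to a power $\ome$ cannot produce the factor $\grH_{a,(k-r)b}^{r}$ to the \emph{first} power that underlies the $K_{a,b}^r(X)$ term (the superscript counts variables, not an exponent), and in a two-way split with weights $\tfrac{s-k+1}{s-r}$ and $\tfrac{k-r-1}{s-r}$ the complementary factor carries $(\grH^{k-1})^{(s-r)/(k-r-1)}$, which (\ref{6.2}) converts into $\grf_{(k-r)b}$ to the power $2(k-1)(s-r)/(k-r-1)\ne 2s$, so neither target mean value emerges. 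The paper's resolution is first to use the containment (\ref{6.7}) to split $\grH_{a,(k-r)b}^{k-1}\le \grH_{a,(k-r)b}^{r}\,\grH_{a,(k-r)b}^{k-r-1}$, and then to apply H\"older with \emph{three} exponents $\ome_1=\tfrac{s-k+1}{s-r}$, $\ome_2=\tfrac{k-r-1}{s}$, $\ome_3=\tfrac{r(k-r-1)}{s(s-r)}$: the first factor is exactly $K_{a,b}^r(X)$ (after discarding the surviving diagonal structure), while the other two carry $(\grH^{k-r-1})^{s/(k-r-1)}$ and $(\grH^{r})^{s/r}$ respectively, each with $m\ome=s$ so that (\ref{6.2}) yields precisely $\grf_{(k-r)b}^{2s}$; since $\ome_2+\ome_3=1-\ome_1=\tfrac{k-r-1}{s-r}$, the two recombine into the single $I_{b,(k-r)b}^{k-1}(X)$-factor with the claimed weight. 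Your sketch names all the right ingredients but omits the decomposition (\ref{6.7}) and the three-way weighting that make the exponents come out.
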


\begin{proof} We first consider the situation in which $a\ge 1$. The argument associated with the case 
$a=0$ is very similar, and so we are able to appeal later to a highly abbreviated argument for this case to
 complete the proof of the lemma. Consider fixed integers $\xi$ and $\eta$ with
\begin{equation}\label{6.3}
1\le \xi\le p^a,\quad 1\le \eta\le p^b\quad \text{and}\quad \eta\not\equiv \xi\mmod{p}.
\end{equation}
The quantity $K_{a,b}^{k-1}(X;\xi,\eta)$ counts the number of integral solutions of the system 
(\ref{2.17}) with $m=k-1$ subject to the attendant conditions on $\bfx$, $\bfy$, $\bfu$, $\bfv$, $\bfw$, 
$\bfz$. Given such a solution of the system (\ref{2.17}), the discussion leading to (\ref{2.18}) shows that
\begin{equation}\label{6.4}
\sum_{i=1}^{k-1}(x_i-\eta)^j\equiv \sum_{i=1}^{k-1}(y_i-\eta)^j\mmod{p^{jb}}\quad (1\le j\le k).
\end{equation}
In the notation introduced in \S3, it follows that for some $k$-tuple of integers $\bfm$, both 
$[\bfx\mmod{p^{kb}}]$ and $[\bfy\mmod{p^{kb}}]$ lie in $\calB_{a,b}^{k-1}(\bfm;\xi,\eta)$. Write
$$\grG_{a,b}(\bfalp;\bfm)=\sum_{\bftet\in \calB_{a,b}^{k-1}(\bfm;\xi,\eta)} 
\prod_{i=1}^{k-1}\grf_{kb}(\bfalp;\tet_i).$$
Then on considering the underlying Diophantine system, we see from (\ref{2.17}) and (\ref{6.4}) that 
$$K_{a,b}^{k-1}(X;\xi,\eta)=\sum_{m_1=1}^{p^b}\ldots \sum_{m_k=1}^{p^{kb}}
\oint |\grG_{a,b}(\bfalp;\bfm)^2\grF^*(\bfalp)^2|\d\bfalp ,$$
where
\begin{equation}\label{6.5}
\grF^*(\bfalp)=\grF_b^{k-1}(\bfalp;\eta)\grf_b(\bfalp;\eta)^{s-k+1}.
\end{equation}

\par We now partition the vectors in each set $\calB_{a,b}^{k-1}(\bfm;\xi,\eta)$ into equivalence classes 
modulo $p^{(k-r)b}$ as in \S3. Write $\calC(\bfm)=\calC_{a,b}^{k-1,k-r}(\bfm;\xi,\eta)$. By applying 
Cauchy's inequality and then recalling (\ref{3.2}), we find by means of Lemma \ref{lemma3.1} that
\begin{align*}
|\grG_{a,b}(\bfalp;\bfm)|^2&=\Bigl| \sum_{\grC \in \calC(\bfm)}
\sum_{\bftet \in \grC}\prod_{i=1}^{k-1}\grf_{kb}(\bfalp;\tet_i)\Bigr|^2\\
&\le \text{card}(\calC(\bfm))\sum_{\grC\in \calC(\bfm)}
\Bigl| \sum_{\bftet \in \grC}\prod_{i=1}^{k-1}\grf_{kb}(\bfalp;\tet_i)\Bigr|^2\\
&\ll M^{\mu b+\nu a}\sum_{\grC\in \calC(\bfm)}
\Bigl| \sum_{\bftet \in \grC}\prod_{i=1}^{k-1}\grf_{kb}(\bfalp;\tet_i)\Bigr|^2.
\end{align*}
Hence
$$K_{a,b}^{k-1}(X;\xi,\eta)\ll M^{\mu b+\nu a}\sum_\bfm \sum_{\grC\in \calC(\bfm)}\oint 
\Bigl| \grF^*(\alp)\sum_{\bftet \in \grC}\prod_{i=1}^{k-1}\grf_{kb}(\bfalp;\tet_i)\Bigr|^2\d \bfalp.$$
For each $k$-tuple $\bfm$ and equivalence class $\grC$, the integral above counts solutions of 
(\ref{2.17}) with the additional constraint that both $[\bfx \mmod{p^{kb}}]$ and 
$[\bfy \mmod{p^{kb}}]$ lie in $\grC$. In particular, one has $\bfx\equiv \bfy\mmod{p^{(k-r)b}}$. 
Moreover, as the sets $\calB_{a,b}^{k-1}(\bfm;\xi,\eta)$ are disjoint for distinct $k$-tuples $\bfm$ with 
$1\le m_j\le p^{jb}$ $(1\le j\le k)$, to each pair $(\bfx,\bfy)$ there corresponds at most one pair 
$(\bfm,\grC)$. Thus we deduce that
$$K_{a,b}^{k-1}(X;\xi,\eta)\ll M^{\mu b+\nu a}H,$$
where $H$ denotes the number of solutions of (\ref{2.17}) subject to the additional condition 
$\bfx\equiv \bfy\mmod{p^{(k-r)b}}$. Hence, on considering the underlying Diophantine systems and 
recalling (\ref{6.1}), we discern that
\begin{equation}\label{6.6}
K_{a,b}^{k-1}(X;\xi,\eta)\ll M^{\mu b+\nu a}\oint \grH_{a,(k-r)b}^{k-1}(\bfalp;\xi)
|\grF^*(\bfalp)|^2\d\bfalp .
\end{equation}

\par An inspection of the definition of $\Xi_a^m(\xi)$ in the preamble to (\ref{2.13}) reveals that when
 $\bfxi\in \Xi_a^{k-1}(\xi)$, then in particular one has
\begin{equation}\label{6.7}
(\xi_1,\ldots ,\xi_r)\in \Xi_a^r(\xi)\quad \text{and}\quad (\xi_{r+1},\ldots ,\xi_{k-1})\in 
\Xi_a^{k-r-1}(\xi).\end{equation}
Note here that in the situation with $r=k-1$, the second of these conditions is interpreted as vacuous. In 
view of (\ref{6.7}), a further consideration of the underlying Diophantine systems leads from (\ref{6.6})
 via (\ref{6.1}) to the upper bound
$$K_{a,b}^{k-1}(X;\xi,\eta)\ll M^{\mu b+\nu a}\oint \grH_{a,(k-r)b}^r(\bfalp;\xi)
\grH_{a,(k-r)b}^{k-r-1}(\bfalp;\xi)|\grF^*(\bfalp)|^2\d\bfalp .$$
By applying H\"older's inequality to the integral on the right hand side of this relation, keeping in mind the 
definition (\ref{6.5}), we obtain the bound
\begin{equation}\label{6.8}
K_{a,b}^{k-1}(X;\xi,\eta)\ll M^{\mu b+\nu a}U_1^{\ome_1}U_2^{\ome_2}U_3^{\ome_3},
\end{equation}
where
\begin{equation}\label{6.9}
\ome_1=\frac{s-k+1}{s-r},\quad \ome_2=\frac{k-r-1}{s},\quad \ome_3=\frac{r(k-r-1)}{s(s-r)},
\end{equation}
and
\begin{equation}\label{6.10}
U_1=\oint \grH_{a,(k-r)b}^r(\bfalp;\xi)|\grF_b^{k-1}(\bfalp;\eta)^2\grf_b(\bfalp;\eta)^{2s-2r}|
\d\bfalp ,
\end{equation}
\begin{equation}\label{6.11}
U_2=\oint |\grF_b^{k-1}(\bfalp;\eta)|^2\grH_{a,(k-r)b}^{k-r-1}(\bfalp;\xi)^{s/(k-r-1)}\d\bfalp ,
\end{equation}
\begin{equation}\label{6.12}
U_3=\oint |\grF_b^{k-1}(\bfalp;\eta)|^2\grH_{a,(k-r)b}^r(\bfalp;\xi)^{s/r}\d\bfalp .
\end{equation}
We note here that the condition (\ref{2.4}) ensures that $r\le k-2$, so that $\ome_1$, $\ome_2$ and 
$\ome_3$ are each positive. Thus the argument leading to (\ref{6.8}) represents a legitimate application 
of H\"older's inequality.\par

Our next task is to relate the mean values $U_i$ to the more familiar ones introduced in \S2. Observe first
 that a consideration of the underlying Diophantine system leads from (\ref{6.10}) via (\ref{6.1}) and
 (\ref{2.13}) to the upper bound
\begin{equation}\label{6.13}
U_1\le \oint |\grF_a^r(\bfalp;\xi)^2\grF_b^{k-1}(\bfalp;\eta)^2\grf_b(\bfalp;\eta)^{2s-2r}|
\d\bfalp.
\end{equation}
Indeed, the Diophantine system underlying the mean value $U_1$ is subject to additional diagonal
 structure that we have discarded in the mean value on the right hand side of (\ref{6.13}). It is worth
 noting here that this man\oe uvre, though superficially inefficient, loses nothing in the ensuing argument,
 since the additional diagonal constraint is recovered without cost in the next stage of our argument, in
 Lemma \ref{lemma6.2}. On recalling (\ref{2.15}) and (\ref{2.20}), we thus deduce from (\ref{6.13})
 that
\begin{equation}\label{6.14}
U_1\le K_{a,b}^r(X).
\end{equation}
Next, by employing (\ref{6.2}) within (\ref{6.11}) and (\ref{6.12}), we find that
$$U_2+U_3\ll (M^{(k-r)b-a})^s\max_{\substack{1\le \zet\le p^{(k-r)b}\\ \zet\equiv \xi\mmod{p^a}}}
\oint |\grF_b^{k-1}(\bfalp ;\eta)^2\grf_{(k-r)b}(\bfalp;\zet)^{2s}|\d\bfalp .$$
Notice here that since the condition (\ref{6.3}) implies that $\eta\not\equiv \xi\mmod{p}$, and we have 
$\zet\equiv \xi\mmod{p^a}$ with $a\ge 1$, then $\zet\not\equiv \eta\mmod{p}$. In this way we deduce
 from (\ref{2.14}) and (\ref{2.19}) that
\begin{equation}\label{6.15}
U_2+U_3\ll (M^{(k-r)b-a})^s I_{b,(k-r)b}^{k-1}(X).
\end{equation}

By substituting (\ref{6.14}) and (\ref{6.15}) into the relation
$$K_{a,b}^{k-1}(X;\xi,\eta )\ll M^{\mu b+\nu a}U_1^{\ome_1}(U_2+U_3)^{1-\ome_1},$$
that is immediate from (\ref{6.8}), and then recalling (\ref{6.9}) and (\ref{2.20}), the conclusion of the
 lemma follows when $a\ge 1$. When $a=0$, we must modify this argument slightly. In this case, from 
(\ref{2.21}) and (\ref{2.22}) we find that
$$K_{0,b}^{k-1}(X)=\max_{1\le \eta\le p^b}\oint |\grF^{k-1}(\bfalp;\eta)^2
\grF_b^{k-1}(\bfalp;\eta)^2\grf_b(\bfalp;\eta)^{2s-2k+2}|\d \bfalp.$$
The desired conclusion follows in this instance by pursuing the proof given above in the case $a\ge 1$,
 noting that the definition of $\grF^{k-1}(\bfalp;\eta)$ ensures that the variables resulting from the 
congruencing argument avoid the congruence class $\eta$ modulo $p$. This completes the proof of the 
lemma.
\end{proof}

We now establish the machinery for an iteration by relating the mean value $K_{a,b}^{r-j+1}(X)$ to 
$I_{b,(k-r+j)b}^{k-1}(X)$ and $K_{a,b}^{r-j}(X)$ for $j=1,\ldots ,r$. Each step of this iteration 
effectively extracts (approximately) two variables mutually congruent modulo $p^{(k-r+j)b}$ for use in
 the next stage of the efficient congruencing argument, leaving a mean value of similar type to the original 
one on which a stronger congruencing process is applicable. It is at this point that we make effective the 
argument sketched at the end of the introduction. It is useful here and later to write
\begin{equation}\label{6.16}
k_j=k-r+j\quad \text{and}\quad s_j=s-r+j.
\end{equation}

\begin{lemma}\label{lemma6.2} Suppose that $a$ and $b$ are integers with $0\le a<b\le \tet^{-1}$ and 
$b\ge (r-1)a$. Then for $1\le j\le r$, one has
$$K_{a,b}^{r-j+1}(X)\ll M^{(r-j)a}
\left( (M^{k_jb-a})^sI_{b,k_jb}^{k-1}(X)\right)^{1/s_j}
\left( K_{a,b}^{r-j}(X)\right)^{s_{j-1}/s_j}.$$
\end{lemma}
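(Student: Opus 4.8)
The plan is to follow the proof of Lemma~\ref{lemma6.1} almost verbatim, making two substitutions: the bound (\ref{3.3}) for $B_{a,b}^{k-1,k-r}(p)$ is replaced by the bound (\ref{3.5}) for $B_{a,b}^{r-j+1,k_j}(p)$, and the single application of H\"older's inequality that extracts $k-r-1$ variables at once is replaced by one peeling off a single pair. Fix $\xi$ and $\eta$ as in (\ref{6.3}). The mean value $K_{a,b}^{r-j+1}(X;\xi,\eta)$ counts the solutions of (\ref{2.17}) with $m=r-j+1$, and the argument leading to (\ref{2.18}) yields $\sum_{i=1}^{r-j+1}(x_i-\eta)^l\equiv \sum_{i=1}^{r-j+1}(y_i-\eta)^l\mmod{p^{lb}}$ $(1\le l\le k)$ for each such solution, so that $[\bfx\mmod{p^{kb}}]$ and $[\bfy\mmod{p^{kb}}]$ lie in a common set $\calB_{a,b}^{r-j+1}(\bfm;\xi,\eta)$. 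Partitioning this set into $\calR(k_jb)$-equivalence classes, applying Cauchy's inequality, and invoking (\ref{3.5}) — available precisely because $1\le j\le r$ and $b\ge(r-1)a$ — exactly as in the passage culminating in (\ref{6.6}), one reaches
\[
K_{a,b}^{r-j+1}(X;\xi,\eta)\ll M^{(r-j)a}\oint \grH_{a,k_jb}^{r-j+1}(\bfalp;\xi)|\grF^*(\bfalp)|^2\d\bfalp ,
\]
where now $\grF^*(\bfalp)=\grF_b^{k-1}(\bfalp;\eta)\grf_b(\bfalp;\eta)^{s_{j-1}}$.

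Next I would mimic the step from (\ref{6.6}) to (\ref{6.8}). Since any $\bfxi\in\Xi_a^{r-j+1}(\xi)$ has $(\xi_1,\ldots,\xi_{r-j})\in\Xi_a^{r-j}(\xi)$ and $\xi_{r-j+1}\in\Xi_a^1(\xi)$, discarding the remaining distinctness constraints gives $\grH_{a,k_jb}^{r-j+1}(\bfalp;\xi)\le \grH_{a,k_jb}^{r-j}(\bfalp;\xi)\grH_{a,k_jb}^1(\bfalp;\xi)$; this is the concrete form of the ``extract one pair of variables congruent modulo $p^{k_jb}$'' manoeuvre from the introduction, the new pair residing in the factor $\grH^1$. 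Applying H\"older's inequality to the integral of $\grH_{a,k_jb}^{r-j}\grH_{a,k_jb}^1|\grF_b^{k-1}(\bfalp;\eta)|^2|\grf_b(\bfalp;\eta)|^{2s_{j-1}}$ with exponents $\ome_1=s_{j-1}/s_j$, $\ome_2=1/s$ and $\ome_3=(r-j)/(s\,s_j)$, and distributing the factors as in (\ref{6.10})--(\ref{6.12}), one obtains $K_{a,b}^{r-j+1}(X;\xi,\eta)\ll M^{(r-j)a}V_1^{\ome_1}V_2^{\ome_2}V_3^{\ome_3}$, where
\[
V_1=\oint \grH_{a,k_jb}^{r-j}(\bfalp;\xi)|\grF_b^{k-1}(\bfalp;\eta)^2\grf_b(\bfalp;\eta)^{2s_j}|\d\bfalp ,
\]
\[
V_2=\oint |\grF_b^{k-1}(\bfalp;\eta)|^2\grH_{a,k_jb}^1(\bfalp;\xi)^{s}\d\bfalp ,\qquad
V_3=\oint |\grF_b^{k-1}(\bfalp;\eta)|^2\grH_{a,k_jb}^{r-j}(\bfalp;\xi)^{s/(r-j)}\d\bfalp .
\]
When $j=r$ one has $\ome_3=0$ and $\grH^{r-j}=\grH^0=1$, so this degenerates harmlessly to a two-factor split.

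The remainder parallels (\ref{6.13})--(\ref{6.15}). Discarding the diagonal structure inside $V_1$ — the ``superficially inefficient'' move noted after (\ref{6.13}), whose loss is recouped at the next stage — gives $V_1\le K_{a,b}^{r-j}(X)$, the point being that the exponent of $\grf_b(\bfalp;\eta)$ in $V_1$ is exactly $2s_j=2s-2(r-j)$, matching (\ref{2.15}) for $m=r-j$. Applying (\ref{6.2}) to $\grH_{a,k_jb}^1(\bfalp;\xi)^{s}$ and to $\grH_{a,k_jb}^{r-j}(\bfalp;\xi)^{s/(r-j)}$ (in each case $m\ome=s\ge1$), and then recalling (\ref{2.14}) and (\ref{2.19}) with $\zet\not\equiv\eta\mmod{p}$ as in Lemma~\ref{lemma6.1}, gives $V_2+V_3\ll(M^{k_jb-a})^sI_{b,k_jb}^{k-1}(X)$. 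Substituting both bounds into $K_{a,b}^{r-j+1}(X;\xi,\eta)\ll M^{(r-j)a}V_1^{\ome_1}(V_2+V_3)^{\ome_2+\ome_3}$ and taking maxima over $\xi$ and $\eta$ yields the lemma, once one checks the bookkeeping $\ome_1+\ome_2+\ome_3=1$, $\ome_2+\ome_3=1/s_j$, and the balancing of the powers of $\grf_b$, $\grF_b^{k-1}$, $\grH^{r-j}$ and $\grH^1$, all of which reduce to the identities $s_{j-1}+1=s_j$ and $s_j+(r-j)=s$. As in Lemma~\ref{lemma6.1}, the case $a=0$ requires only the cosmetic change of replacing $\grF_a^{r-j}(\bfalp;\xi)$ by the precongruenced generating function $\grF^{r-j}(\bfalp;\eta)$ and observing that the congruencing variables avoid the class $\eta$ modulo $p$. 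The sole nonroutine point, and hence the main obstacle, is the calibration of the triple $(\ome_1,\ome_2,\ome_3)$: it must be arranged so that the two genuinely useful mean values emerge with \emph{exactly} the exponents $s_{j-1}/s_j$ on $K_{a,b}^{r-j}(X)$ and $1/s_j$ on $(M^{k_jb-a})^sI_{b,k_jb}^{k-1}(X)$ that the ensuing iteration will consume, while the parasitic power of $M$ is held down to the $M^{(r-j)a}$ supplied by (\ref{3.5}).
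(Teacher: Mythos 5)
Your proposal is correct and follows essentially the same route as the paper: the same congruencing step via (\ref{3.5}) leading to the bound $M^{(r-j)a}\oint \grH_{a,k_jb}^{r-j+1}|\grF_j^*|^2\d\bfalp$, the same factorisation $\grH^{r-j+1}\le\grH^{r-j}\grH^1$, and the identical H\"older exponents $\ome_1=s_{j-1}/s_j$, $\ome_2=1/s$, $\ome_3=(r-j)/(ss_j)$ with the same subsequent reductions $U_1\le K_{a,b}^{r-j}(X)$ and $U_2+U_3\ll(M^{k_jb-a})^sI_{b,k_jb}^{k-1}(X)$. The exponent bookkeeping you record is exactly what the paper verifies.
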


\begin{proof} We follow closely the argument of the proof of previous lemma. We again suppose in the 
first instance that $a\ge 1$. Consider fixed integers $\xi$ and $\eta$ satisfying the conditions imposed by 
(\ref{6.3}). The quantity $K_{a,b}^{r-j+1}(X;\xi,\eta)$ counts the number of integral solutions of the
 system (\ref{2.17}) with $m=r-j+1$ subject to the attendant conditions on $\bfx$, $\bfy$, $\bfu$,
 $\bfv$, $\bfw$, $\bfz$. Given such a solution of the system (\ref{2.17}), the argument leading to
 (\ref{2.18}) shows that
\begin{equation}\label{6.17}
\sum_{i=1}^{r-j+1}(x_i-\eta)^l\equiv \sum_{i=1}^{r-j+1}(y_i-\eta)^l\mmod{p^{lb}}\quad 
(1\le l\le k).
\end{equation}
In the notation introduced in \S3, it follows that for some $k$-tuple of integers $\bfm$, both 
$[\bfx\mmod{p^{kb}}]$ and $[\bfy\mmod{p^{kb}}]$ lie in $\calB_{a,b}^{r-j+1}(\bfm;\xi,\eta)$. Write
$$\grG_{a,b}(\bfalp;\bfm)=\sum_{\bftet\in \calB_{a,b}^{r-j+1}(\bfm;\xi,\eta)} 
\prod_{i=1}^{r-j+1}\grf_{kb}(\bfalp;\tet_i).$$
Then on considering the underlying Diophantine system, we see from (\ref{2.17}) and (\ref{6.17}) that 
$$K_{a,b}^{r-j+1}(X;\xi,\eta)=\sum_{m_1=1}^{p^b}\ldots \sum_{m_k=1}^{p^{kb}}
\oint |\grG_{a,b}(\bfalp;\bfm)^2\grF_j^*(\bfalp)^2|\d\bfalp ,$$
where we write
\begin{equation}\label{6.18}
\grF_j^*(\bfalp)=\grF_b^{k-1}(\bfalp;\eta)\grf_b(\bfalp;\eta)^{s_{j-1}}.
\end{equation}

\par We now partition the vectors in each set $\calB_{a,b}^{r-j+1}(\bfm;\xi,\eta)$ into equivalence 
classes modulo $p^{k_jb}$ as in \S3. Write $\calC(\bfm)=\calC_{a,b}^{r-j+1,k_j}(\bfm;\xi,\eta)$. By 
applying Cauchy's inequality and then recalling (\ref{3.2}), we find by means of Lemma \ref{lemma3.1} 
that
\begin{align*}
|\grG_{a,b}(\bfalp;\bfm)|^2&=\Bigl| \sum_{\grC \in \calC(\bfm)}
\sum_{\bftet \in \grC}\prod_{i=1}^{r-j+1}\grf_{kb}(\bfalp;\tet_i)\Bigr|^2\\
&\le \text{card}(\calC(\bfm))\sum_{\grC\in \calC(\bfm)}
\Bigl| \sum_{\bftet \in \grC}\prod_{i=1}^{r-j+1}\grf_{kb}(\bfalp;\tet_i)\Bigr|^2\\
&\ll M^{(r-j)a}\sum_{\grC\in \calC(\bfm)}
\Bigl| \sum_{\bftet \in \grC}\prod_{i=1}^{r-j+1}\grf_{kb}(\bfalp;\tet_i)\Bigr|^2.
\end{align*}
Hence
$$K_{a,b}^{r-j+1}(X;\xi,\eta)\ll M^{(r-j)a}\sum_\bfm \sum_{\grC\in \calC(\bfm)}\oint 
\Bigl| \grF_j^*(\alp)\sum_{\bftet \in \grC}\prod_{i=1}^{r-j+1}\grf_{kb}(\bfalp;\tet_i)\Bigr|^2
\d \bfalp.$$
For each $k$-tuple $\bfm$ and equivalence class $\grC$, the integral above counts solutions of 
(\ref{2.17}) with the additional constraint that both $[\bfx \mmod{p^{kb}}]$ and 
$[\bfy \mmod{p^{kb}}]$ lie in $\grC$. In particular, one has $\bfx\equiv \bfy\mmod{p^{k_jb}}$. 
Moreover, as the sets $\calB_{a,b}^{r-j+1}(\bfm;\xi,\eta)$ are disjoint for distinct $k$-tuples $\bfm$ with 
$1\le m_j\le p^{jb}$ $(1\le j\le k)$, to each pair $(\bfx,\bfy)$ there corresponds at most one pair 
$(\bfm,\grC)$. Thus we deduce that
$$K_{a,b}^{r-j+1}(X;\xi,\eta)\ll M^{(r-j)a}H,$$
where $H$ denotes the number of solutions of (\ref{2.17}) subject to the additional condition 
$\bfx\equiv \bfy\mmod{p^{k_jb}}$. Hence, on considering the underlying Diophantine systems and 
recalling (\ref{6.1}), we discern that
\begin{equation}\label{6.19}
K_{a,b}^{r-j+1}(X;\xi,\eta)\ll M^{(r-j)a}\oint \grH_{a,k_jb}^{r-j+1}(\bfalp;\xi)
|\grF_j^*(\bfalp)|^2\d\bfalp .
\end{equation}

\par An inspection of the definition of $\Xi_a^m(\xi)$, given in the preamble to (\ref{2.13}), reveals on
 this occasion that when $\bfxi\in \Xi_a^{r-j+1}(\xi)$, then
$$(\xi_1,\ldots ,\xi_{r-j})\in \Xi_a^{r-j}(\xi)\quad \text{and}\quad (\xi_{r-j+1})\in 
\Xi_a^{1}(\xi).$$
Then a further consideration of the underlying Diophantine systems leads from (\ref{6.19}) via (\ref{6.1}) 
to the upper bound
$$K_{a,b}^{r-j+1}(X;\xi,\eta)\ll M^{(r-j)a}\oint \grH_{a,k_jb}^{r-j}(\bfalp;\xi)\grH_{a,k_jb}^{1}
(\bfalp;\xi)|\grF_j^*(\bfalp)|^2\d\bfalp .$$
By applying H\"older's inequality to the integral on the right hand side of this relation, bearing in mind the 
definitions (\ref{6.16}) and (\ref{6.18}), we obtain the bound
\begin{equation}\label{6.20}
K_{a,b}^{r-j+1}(X;\xi,\eta)\ll M^{(r-j)a}U_1^{\ome_1}U_2^{\ome_2}U_3^{\ome_3},
\end{equation}
where
\begin{equation}\label{6.21}
\ome_1=s_{j-1}/s_j,\quad \ome_2=1/s,\quad \ome_3=(r-j)/(ss_j),
\end{equation}
and
\begin{equation}\label{6.22}
U_1=\oint \grH_{a,k_jb}^{r-j}(\bfalp;\xi)|\grF_b^{k-1}(\bfalp;\eta)^2\grf_b
(\bfalp;\eta)^{2s_j}|\d\bfalp ,
\end{equation}
\begin{equation}\label{6.23}
U_2=\oint |\grF_b^{k-1}(\bfalp;\eta)|^2\grH_{a,k_jb}^{1}(\bfalp;\xi)^s\d\bfalp ,
\end{equation}
\begin{equation}\label{6.24}
U_3=\oint |\grF_b^{k-1}(\bfalp;\eta)|^2\grH_{a,k_jb}^{r-j}(\bfalp;\xi)^{s/(r-j)}\d\bfalp .
\end{equation}

\par We again relate the mean values $U_i$ to those introduced in \S2. Observe first that a consideration
 of the underlying Diophantine system leads from (\ref{6.22}) via (\ref{6.1}) and (\ref{2.13}) to the upper
 bound
$$U_1\le \oint |\grF_a^{r-j}(\bfalp;\xi)^2\grF_b^{k-1}(\bfalp;\eta)^2\grf_b(\bfalp;\eta)^{2s_j}|
\d\bfalp.$$
On recalling (\ref{2.15}) and (\ref{2.20}), we thus deduce that
\begin{equation}\label{6.25}
U_1\le K_{a,b}^{r-j}(X).
\end{equation}
Next, by employing (\ref{6.2}) within (\ref{6.23}) and (\ref{6.24}), we find that
$$U_2+U_3\ll (M^{k_jb-a})^s\max_{\substack{1\le \zet\le p^{k_jb}\\ \zet\equiv \xi\mmod{p^a}}}
\oint |\grF_b^{k-1}(\bfalp ;\eta)^2\grf_{k_jb}(\bfalp;\zet)^{2s}|\d\bfalp .$$
Notice here that since the condition (\ref{6.3}) implies that $\eta\not\equiv \xi\mmod{p}$, and we have 
$\zet\equiv \xi\mmod{p^a}$ with $a\ge 1$, then once more one has $\zet\not\equiv \eta\mmod{p}$. In
 this way we deduce from (\ref{2.14}) and (\ref{2.19}) that
\begin{equation}\label{6.26}
U_2+U_3\ll (M^{k_jb-a})^s I_{b,k_jb}^{k-1}(X).
\end{equation}

By substituting (\ref{6.25}) and (\ref{6.26}) into the relation
$$K_{a,b}^{r-j+1}(X;\xi,\eta)\ll M^{(r-j)a}U_1^{\ome_1}(U_2+U_3)^{1-\ome_1},$$
that is immediate from (\ref{6.20}), and then recalling (\ref{6.21}) and (\ref{2.20}), the conclusion of the
 lemma follows when $a\ge 1$. When $a=0$, we must modify this argument slightly. In this case, from 
(\ref{2.21}) and (\ref{2.22}) we find that
$$K_{0,b}^{r-j+1}(X)=\max_{1\le \eta\le p^b}\oint |\grF^{r-j+1}(\bfalp;\eta)^2
\grF_b^{k-1}(\bfalp;\eta)^2\grf_b(\bfalp;\eta)^{2s_{j-1}}|\d \bfalp.$$
The desired conclusion follows in this instance by pursuing the proof given above in the case $a\ge 1$,
 noting that the definition of $\grF^{r-j+1}(\bfalp;\eta)$ ensures that the variables resulting from the 
congruencing argument avoid the congruence class $\eta$ modulo $p$. This completes the proof of the 
lemma.
\end{proof}

There are, of course, similarities between the arguments applied to establish Lemmata \ref{lemma6.1} and 
\ref{lemma6.2}. Some economy of space would be afforded by the proof of a common lemma, of which 
these respective lemmata would be special cases. However, the considerable complications associated with 
such a unified approach would, on the one hand, obscure the strategy underlying the proof of these 
lemmata, and on the other hand consume not inconsiderable space to accommodate these complications. 
Thus, we have deliberately opted for clarity over concision in offering two separate treatments.

\section{The multigrade combination} We next combine the estimates supplied by Lemmata 
\ref{lemma6.1} and \ref{lemma6.2} so as to bound $K_{a,b}^{k-1}(X)$ in terms of the mean values
 $I_{b,k_jb}^{k-1}(X)$ $(0\le j\le r)$. We achieve this goal by initiating this process with Lemma
 \ref{lemma6.1}, and then iterate the application of Lemma \ref{lemma6.2}. Before announcing our basic
 asymptotic estimate, we recall the definition (\ref{6.16}) and then define the exponents
\begin{equation}\label{7.1}
\phi_j=(s-k+1)/(s_{j-1}s_j)\quad (1\le j\le r).
\end{equation}
In addition, we write
\begin{equation}\label{7.2}
\phi_0=(k-r-1)/s_0\quad \text{and}\quad \phi^*=(s-k+1)/s,
\end{equation}
so that
\begin{align}
\phi^*+\sum_{j=0}^r\phi_j=&\, \frac{s-k+1}{s}+\frac{k-r-1}{s-r}+
(s-k+1)\sum_{j=1}^r(s_{j-1}^{-1}-s_j^{-1})\notag \\
=&\,\frac{k-r-1}{s-r}+\frac{s-k+1}{s-r}=1.\label{7.3}
\end{align}
Notice here that $\phi_j$ is roughly equal to $1/s$ for $1\le j\le r$. With this in mind, the reader will find
 that the conclusion of our next lemma is an approximate analogue of the formula presented in the display
 preceding \cite[equation (11.3)]{Woo2012a}, a key element in the heuristic argument that inspired our 
present work.

\begin{lemma}\label{lemma7.1}
Suppose that $a$ and $b$ are integers with $0\le a<b\le \tet^{-1}$ and $b\ge (r-1)a$. Then one has
$$K_{a,b}^{k-1}(X)\ll M^{\mu'b+\nu'a}\left(J_\grw(X/M^b)\right)^{\phi^*}
\prod_{j=0}^r\left( I_{b,k_jb}^{k-1}(X)\right)^{\phi_j},$$
where
\begin{equation}\label{7.4}
\mu'=\mu+sk_0(k_0-1)/s_0+s(s-k+1)\sum_{j=1}^rk_j/(s_{j-1}s_j)
\end{equation}
and
\begin{equation}\label{7.5}
\nu'=\nu-s(k_0-1)/s_0+(s-k+1)\sum_{j=1}^rs_{j-1}^{-1}\left( r-j-s/s_j\right) .
\end{equation}
\end{lemma}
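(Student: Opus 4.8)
The strategy is to chain Lemma \ref{lemma6.1} with $r$ successive applications of Lemma \ref{lemma6.2}, bookkeeping all the exponents and powers of $M$ that accumulate, and then convert the surviving mean value $K_{a,b}^0(X)$ into a power of $J_\grw(X/M^b)$. First I would record that $K_{a,b}^0(X;\xi,\eta)=\oint|\grF_b^{k-1}(\bfalp;\eta)^2\grf_b(\bfalp;\eta)^{2s}|\d\bfalp$, since when $m=0$ the factor $\grF_a^0$ is trivial; by Lemma \ref{lemma2.1} (or Lemma \ref{lemma2.2} applied to the underlying Diophantine system, discarding the distinctness conditions built into $\grF_b^{k-1}$) this is bounded by $J_\grw(X/M^b)$, since $\grw=s+k-1$ and the $2s+2(k-1)=2\grw$ variables all lie in a fixed residue class modulo $p^b$. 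Thus the endpoint of the iteration contributes exactly the factor $J_\grw(X/M^b)$, raised to whatever exponent has built up.

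**The iteration.** Starting from Lemma \ref{lemma6.1},
$$K_{a,b}^{k-1}(X)\ll M^{\mu b+\nu a}\bigl((M^{k_0b-a})^sI_{b,k_0b}^{k-1}(X)\bigr)^{\phi_0}\bigl(K_{a,b}^r(X)\bigr)^{1-\phi_0},$$
where I have rewritten $(k-r-1)/(s-r)=\phi_0$ and $(s-k+1)/(s-r)=1-\phi_0$ using $s_0=s-r$, $k_0=k-r$ from (\ref{6.16}). Next I apply Lemma \ref{lemma6.2} with $j=1$ to expand $K_{a,b}^r(X)$, then with $j=2$ to expand $K_{a,b}^{r-1}(X)$, and so on through $j=r$, terminating in $K_{a,b}^0(X)$. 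At the $j$th stage the accumulated exponent on $K_{a,b}^{r-j}(X)$ telescopes: since Lemma \ref{lemma6.2} carries exponent $1/s_j$ onto the new $I$-factor and $s_{j-1}/s_j$ onto the residual $K$-factor, an induction shows that after initiating with Lemma \ref{lemma6.1} the exponent on $K_{a,b}^{r-j}(X)$ equals $(1-\phi_0)\,s_0\cdots s_{j-1}/(s_1\cdots s_j)=(s-k+1)\,s_{j-1}^{-1}s_j^{-1}\cdots$, which simplifies (using $s_0=s-r$, $1-\phi_0=(s-k+1)/s_0$) to $(s-k+1)/s_j$. Consequently the exponent deposited on $I_{b,k_jb}^{k-1}(X)$ is $(s-k+1)/(s_{j-1}s_j)=\phi_j$ for $1\le j\le r$, and the exponent finally landing on $K_{a,b}^0(X)\ll J_\grw(X/M^b)$ is $(s-k+1)/s_r = (s-k+1)/s = \phi^*$. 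The identity (\ref{7.3}) is then exactly the statement that these exponents sum to $1$, which serves as an internal consistency check.

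**Collecting the $M$-powers.** Each invocation of Lemma \ref{lemma6.1} or Lemma \ref{lemma6.2} contributes two kinds of $M$-powers: a prefactor ($M^{\mu b+\nu a}$ from Lemma \ref{lemma6.1}, $M^{(r-j)a}$ from the $j$th application of Lemma \ref{lemma6.2}) and a factor $(M^{k_jb-a})^s$ sitting inside the $I$-bracket, which after raising to exponent $\phi_j$ becomes $M^{s\phi_j(k_jb-a)}$; the $j=0$ term similarly produces $M^{s\phi_0(k_0b-a)}$. Keeping the $I_{b,k_jb}^{k-1}(X)$ factors un-normalised (as in the statement), I strip these internal powers out and add them to the prefactors. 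The coefficient of $b$ is then $\mu+s\phi_0 k_0+s\sum_{j=1}^r\phi_j k_j=\mu+sk_0(k_0-1)/s_0+s(s-k+1)\sum_{j=1}^r k_j/(s_{j-1}s_j)$, which is $\mu'$ as in (\ref{7.4}) once one notes $\phi_0=(k_0-1)/s_0$. The coefficient of $a$ is $\nu-s\phi_0-s\sum_{j=1}^r\phi_j+\sum_{j=1}^r(r-j)\cdot(\text{accumulated exponent on }K_{a,b}^{r-j+1})$; here the prefactor $M^{(r-j)a}$ from the $j$th step gets raised to the accumulated exponent $(s-k+1)/s_{j-1}$ on $K_{a,b}^{r-j+1}(X)$, giving $\sum_{j=1}^r(r-j)(s-k+1)/s_{j-1}$, while the $-a$ inside each $I$-bracket contributes $-s\phi_0-s\sum_{j=1}^r\phi_j=-s(k_0-1)/s_0-(s-k+1)\sum_{j=1}^r s/(s_{j-1}s_j)$. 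Combining, the $a$-coefficient is $\nu-s(k_0-1)/s_0+(s-k+1)\sum_{j=1}^r s_{j-1}^{-1}\bigl((r-j)-s/s_j\bigr)$, which is precisely $\nu'$ in (\ref{7.5}).

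**Main obstacle.** The conceptual content is routine, but the bookkeeping of the $a$-exponent $\nu'$ is where care is needed: one must correctly track that the prefactor $M^{(r-j)a}$ produced at the $j$th step of Lemma \ref{lemma6.2} is multiplied through by \emph{all} subsequent exponents, i.e. raised to the accumulated power $(s-k+1)/s_{j-1}$ on the $K$-factor it was attached to, not merely to $1/s$ or to $\phi_j$. Getting the telescoping of the $K$-exponents right — and in particular verifying that the residual exponent is $(s-k+1)/s_{j}$ at stage $j$ and $\phi^*$ at the end — is the crux; everything else is algebraic simplification using (\ref{6.16}), (\ref{7.1}) and (\ref{7.2}). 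I would also remark that the hypotheses $0\le a<b\le\tet^{-1}$ and $b\ge(r-1)a$ are exactly what is needed so that each application of Lemmata \ref{lemma6.1}--\ref{lemma6.2} is legitimate (the latter because $k_jb=(k-r+j)b\ge(k-r)b\ge(r-1)(r-j)\cdots$ — more simply, $b\ge(r-1)a$ persists unchanged through the iteration since $a$ and $b$ are held fixed), so no extra constraints are incurred.
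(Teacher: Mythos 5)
Your proposal is correct and follows essentially the same route as the paper: Lemma \ref{lemma6.1} followed by $r$ applications of Lemma \ref{lemma6.2}, with the residual exponent on $K_{a,b}^{r-l}(X)$ telescoping to $(s-k+1)/s_l$, the endpoint $K_{a,b}^0(X)\ll J_\grw(X/M^b)$ supplied by Lemma \ref{lemma2.1}, and the same bookkeeping of the $M$-powers (the paper merely packages this as an induction on $l$ with named intermediate exponents $\mu_l$, $\nu_l$, $\phi_l^*$). Your identification of the crux — that the prefactor $M^{(r-j)a}$ and the internal factor $(M^{k_jb-a})^{s/s_j}$ must each be raised to the accumulated exponent $(s-k+1)/s_{j-1}$ — matches the paper's computation of $\nu_J-\nu_{J-1}$ exactly.
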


\begin{proof} We prove by induction that for $0\le l\le r$, one has
\begin{equation}\label{7.6}
K_{a,b}^{k-1}(X)\ll M^{\mu_lb+\nu_la}\left( K_{a,b}^{r-l}(X)\right)^{\phi^*_l}
\prod_{j=0}^l\left( I_{b,k_jb}^{k-1}(X)\right)^{\phi_j},
\end{equation}
where
$$\phi^*_l=(s-k+1)/s_l,$$
$$\mu_l=\mu+sk_0(k_0-1)/s_0+s(s-k+1)\sum_{j=1}^lk_j/(s_{j-1}s_j)$$
and
$$\nu_l=\nu-s(k_0-1)/s_0+(s-k+1)\sum_{j=1}^ls_{j-1}^{-1}(r-j-s/s_j).$$
The conclusion of the lemma follows from the case $l=r$ of (\ref{7.6}), on noting that Lemma 
\ref{lemma2.1} delivers the estimate $K_{a,b}^0(X)\ll J_\grw(X/M^b)$.\par

We observe first that the inductive hypothesis (\ref{7.6}) holds when $l=0$, as a consequence of Lemma
 \ref{lemma6.1}, definition (\ref{6.16}), and the familiar convention that an empty sum is zero. Suppose
 then that $J$ is a positive integer not exceeding $r$, and that the inductive hypothesis (\ref{7.6}) holds
 for $0\le l<J$. An application of Lemma \ref{lemma6.2} supplies the estimate
$$K_{a,b}^{r-J+1}(X)\ll M^{(r-J)a}
\left( (M^{k_Jb-a})^sI_{b,k_Jb}^{k-1}(X)\right)^{1/s_J}\left( K_{a,b}^{r-J}(X)\right)^{s_{J-1}/s_J}.$$
On substituting this bound into the estimate (\ref{7.6}) with $l=J-1$, one obtains the new upper bound
$$K_{a,b}^{k-1}(X)\ll M^\Ome \left( K_{a,b}^{r-J}(X)\right)^{\phi^*_J}
\prod_{j=0}^J\left( I_{b,k_jb}^{k-1}(X)\right)^{\phi_j},$$
where
$$\Ome=\mu_{J-1}b+\nu_{J-1}a+\left( (r-J)a+s(k_Jb-a)/s_J\right) \phi^*_{J-1}.$$
Since
$$\mu_J=\mu_{J-1}+s(s-k+1)k_J/(s_{J-1}s_J),$$
and
$$\nu_J=\nu_{J-1}+(s-k+1)s_{J-1}^{-1}\left( r-J-s/s_J\right),$$
we find that the estimate (\ref{7.6}) holds with $l=J$, completing the proof of the inductive step. In view 
of our earlier remarks, the conclusion of the lemma now follows.
\end{proof}

We next recall the anticipated magnitude operator $\llbracket \,\cdot \,\rrbracketsub{{\grd,\rho}}$ defined 
in equations (\ref{2.23}) to (\ref{2.25}), and convert Lemma \ref{lemma7.1} into a more portable form. 
Before announcing our conclusions, we recall the definition (\ref{2.27}) of $\Lam(\Del)$.

\begin{lemma}\label{lemma7.2} Suppose that $a$ and $b$ are integers with $0\le a<b\le \tet^{-1}$ and 
$b\ge (r-1)a$. Then one has
$$\llbracket K_{a,b}^{k-1}(X)\rrbracketsub{{\Del^\dag,1}}\ll 
\left( (X/M^b)^{\Lam(\Del^\dag)+\del}\right)^{\phi^*}\prod_{j=0}^r
\llbracket I_{b,k_jb}^{k-1}(X)\rrbracketsub{{\Del^\dag,1}}^{\phi_j},$$
where
$$\Del^\dag=\sum_{m=1}^r\frac{(m-1)(k-m-1)}{s-m}.$$
\end{lemma}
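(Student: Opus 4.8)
The plan is to obtain Lemma \ref{lemma7.2} from Lemma \ref{lemma7.1} by pure bookkeeping: one substitutes the normalisations (\ref{2.23})--(\ref{2.25}) into the conclusion of Lemma \ref{lemma7.1}, invokes the upper bound in (\ref{2.26}) to handle the factor $J_\grw(X/M^b)$, and then checks that the powers of $X$ and of $M$ on the two sides agree. The deliberate inclusion of the factor $(M^{ka-b})^\rho$ with $\rho=1$ in (\ref{2.24}) and (\ref{2.25}) is precisely what makes this comparison work out.

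First I would unwind the target. Writing out $\llbracket K_{a,b}^{k-1}(X)\rrbracketsub{{\Del^\dag,1}}$ and the factors $\llbracket I_{b,k_jb}^{k-1}(X)\rrbracketsub{{\Del^\dag,1}}$ via (\ref{2.24}) and (\ref{2.25}) with $m=k-1$ — keeping in mind that in $\llbracket I_{b,k_jb}^{k-1}(X)\rrbracketsub{{\Del^\dag,1}}$ the role of ``$a$'' is played by $b$ and that of ``$b$'' by $k_jb$ — the assertion of Lemma \ref{lemma7.2} reduces to an upper bound for $K_{a,b}^{k-1}(X)$ of exactly the shape provided by Lemma \ref{lemma7.1}, but with $J_\grw(X/M^b)^{\phi^*}$ replaced by $\bigl((X/M^b)^{\Lam(\Del^\dag)+\del}\bigr)^{\phi^*}$ and with $M^{\mu'b+\nu'a}$ replaced by an explicit monomial in $X$ and $M$ coming from the normalising denominators. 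Since (\ref{2.26}), applied with $Y=X/M^b$ and the exponent $\Del^\dag$, gives $\llbracket J_\grw(X/M^b)\rrbracketsub{{\Del^\dag}}<(X/M^b)^{\Lam(\Del^\dag)+\del}$, and (\ref{2.27}) records $\lam=\Lam(\Del^\dag)+2\grw-\tfrac12k(k+1)+\Del^\dag$, it then only remains to verify that the two monomials coincide (the implied constants being absorbed into the $\ll$).

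Next I would split that monomial identity into its $X$-part and its $M$-part, treating $X$ and $M$ as independent. The $X$-exponents match at once: on both sides the total exponent of $X$ equals $(2\grw-\tfrac12k(k+1)+\Del^\dag)\phi^*$, the key input being the relation $\phi^*+\sum_{j=0}^r\phi_j=1$ from (\ref{7.3}). For the $M$-exponents I would separate the coefficient of $a$ from that of $b$. Matching the coefficient of $a$ forces $\nu'=\tfrac12(k-1)(k-4)-\Del^\dag$; using the formula (\ref{7.5}) for $\nu'$, the value of $\nu$ from (\ref{3.4}), and the elementary identity $(s-k+1)(m-1)+(m-1)(k-m-1)=(m-1)(s-m)$ — which ties $\Del^\dag$ to $\sum_{m=1}^r(m-1)=\tfrac12r(r-1)$ — this collapses once more to $\sum_{j=0}^r\phi_j=(k-1)/s$, i.e. again to (\ref{7.3}).

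The coefficient of $b$ is where the real work lies, and I expect it to be the main obstacle. Here one must evaluate $\sum_{j=0}^r k_j\phi_j$ — and hence also $\sum_{j=0}^r(r-j)\phi_j=k\sum_{j=0}^r\phi_j-\sum_{j=0}^r k_j\phi_j$ — by summation by parts, exploiting that $\phi_j=(s-k+1)(s_{j-1}^{-1}-s_j^{-1})$ for $1\le j\le r$. One then observes, via the formula (\ref{7.4}) for $\mu'$ (equivalently $\mu'=\mu+s\sum_{j=0}^r k_j\phi_j$), that the residual harmonic-type sum $\sum_{j=1}^{r-1}s_j^{-1}$ cancels, after which what survives is a polynomial identity in $k$, $r$ and $s$ involving $\mu=\tfrac12(k-r-1)(k-r-2)$ and the same $\Del^\dag$-identity as above, which can be checked directly. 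This is routine in principle but genuinely delicate, since $\mu'$ and $\Del^\dag$ are each assembled from sums with the awkward denominators $s_{j-1}s_j$ and $s-m$ and one must see that they conspire correctly. Finally the case $a=0$ needs no separate argument: by (\ref{2.21}) and (\ref{2.22}) the mean value $K_{0,b}^{k-1}(X)$ obeys the same normalisation, and Lemma \ref{lemma7.1} already covers it.
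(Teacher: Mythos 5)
Your proposal is correct and follows essentially the same route as the paper: substitute the normalisations (\ref{2.23})--(\ref{2.25}) into Lemma \ref{lemma7.1}, invoke (\ref{2.26}) for the factor $J_\grw(X/M^b)$, and verify the exponents of $X$ and $M$ via the telescoping identities behind (\ref{7.3}) and (\ref{7.9}), the relation $\mu'=\mu+s\sum_{j=0}^rk_j\phi_j$, the substitution $m=r-j+1$, and the identity $(s-k+1)(m-1)=(m-1)(s-m)-(m-1)(k-m-1)$ linking $\Del^\dag$ to $\tfrac12r(r-1)$. Your computed target value $\nu'=\tfrac12(k-1)(k-4)-\Del^\dag$ agrees with the paper's $\nu^*=\nu'+\kap=-k$, and your observation that the residual sum $\sum_m(k-m-1)/(s-m)$ is absorbed by the $\rho=1$ weight $(M^{ka-b})$ in the $I$-normalisation is exactly how the paper closes the argument.
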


\begin{proof} In order to promote greater transparency, we begin by utilising the operator 
$\llbracket {\,\cdot \,}\rrbracketsub{{\Del^\dag,0}}$, only later translating our statements into analogues
 for the operator $\llbracket \,\cdot \,\rrbracketsub{{\Del^\dag,1}}$. Write 
$\Lam^\dag=\Lam(\Del^\dag)$,
\begin{equation}\label{7.7}
\kap=2k-2-\tfrac{1}{2}k(k+1)+\Del^\dag,
\end{equation}
and define $\mu'$ and $\nu'$ as in (\ref{7.4}) and (\ref{7.5}). Then we find from Lemma \ref{lemma7.1} 
in combination with (\ref{2.11}) that
\begin{equation}\label{7.8}
\llbracket K_{a,b}^{k-1}(X)\rrbracketsub{{\Del^\dag,0}}\ll M^{\mu^*b+\nu^*a}
\left( (X/M^b)^{\Lam^\dag+\del}\right)^{\phi^*}
\prod_{j=0}^r\llbracket I_{b,k_jb}^{k-1}(X)\rrbracketsub{{\Del^\dag,0}}^{\phi_j},
\end{equation}
where
$$\mu^*=\mu'+2s-\phi^*(\kap+2s)-\sum_{j=0}^r\phi_j(\kap+2sk_j)\quad \text{and}\quad
 \nu^*=\nu'+\kap.$$

\par On recalling (\ref{7.1}) to (\ref{7.5}), we find that
$$\mu^*=\mu+2k-2-\kap-sk_0(k_0-1)s_0^{-1}-s(s-k+1)\sum_{j=1}^rk_j/(s_{j-1}s_j).$$
But in view of (\ref{6.16}), one has
\begin{align}
\sum_{j=1}^r\frac{(s-k+1)k_j}{s_{j-1}s_j}&=\sum_{j=1}^r
\left( \frac{k_{j-1}k_j}{s_j}-\frac{k_{j-2}k_{j-1}}{s_{j-1}}-\frac{k_{j-2}}{s_{j-1}}\right)\notag \\
&=\frac{k_{r-1}k_r}{s_r}-\frac{k_0(k_0-1)}{s_0}-\sum_{j=1}^r\frac{k_{j-2}}{s_{j-1}}.\label{7.9}
\end{align}
Thus, again recalling (\ref{6.16}) and making the change of variable $m=r-j+1$, we discern that
$$\mu^*=\mu+\tfrac{1}{2}k(k+1)-\Del^\dag-k(k-1)+\sum_{m=1}^r\frac{s(k-m-1)}{s-m}.$$
Consequently, by reference now to (\ref{3.4}) and the definition of $\Del^\dag$, we obtain
$$\mu^*=\tfrac{1}{2}(k-r-1)(k-r-2)-\tfrac{1}{2}k(k-3)+
\sum_{m=1}^r\left( k-m-1+\frac{k-m-1}{s-m}\right),$$
whence
\begin{equation}\label{7.10}
\mu^*=1+\sum_{m=1}^r\frac{k-m-1}{s-m}.
\end{equation}

\par Observe next that
$$\sum_{j=1}^r\frac{s-k+1}{s_{j-1}s_j}=\sum_{j=1}^r
\left( \frac{k_{j-1}}{s_j}-\frac{k_{j-2}}{s_{j-1}}\right)=\frac{k_{r-1}}{s_r}-\frac{k_0-1}{s_0}.$$
Thus, in view of (\ref{6.16}), it follows from (\ref{7.5}) that
$$\nu^*=\nu+\kap-(k-1)+(s-k+1)\sum_{j=1}^r\frac{r-j}{s-r+j-1}.$$
On making the change of variable $m=r-j+1$ once again, we therefore obtain
$$\nu^*=\nu+\kap-(k-1)+\sum_{m=1}^r\left( m-1-\frac{(m-1)(k-m-1)}{s-m}\right).$$
In this way, now invoking (\ref{3.4}), (\ref{7.7}) and the definition of $\Del^\dag$, we arrive at
 the relation
\begin{equation}\label{7.11}
\nu^*=\tfrac{1}{2}(k-r-1)(k+r-2)+k-1-\tfrac{1}{2}k(k+1)+\tfrac{1}{2}r(r-1)=-k.
\end{equation}

\par At this point we note from (\ref{2.24}) that
$$\llbracket I_{b,k_jb}^{k-1}(X)\rrbracketsub{{\Del^\dag,1}}=M^{(k-k_j)b}
\llbracket I_{b,k_jb}^{k-1}(X)\rrbracketsub{{\Del^\dag,0}}.$$
Furthermore, in view of (\ref{6.16}), (\ref{7.1}) and (\ref{7.2}), one has the relation
\begin{align*}
\sum_{j=1}^r(k-k_j)\phi_j&=(s-k+1)\sum_{j=1}^r\frac{r-j}{s_{j-1}s_j}\\
&=\sum_{j=1}^r\left( \frac{(r-j)k_{j-1}}{s_j}-\frac{(r-j+1)k_{j-2}}{s_{j-1}}+\frac{k_{j-2}}{s_{j-1}}
\right),
\end{align*}
whence
$$\sum_{j=0}^r(k-k_j)\phi_j=\frac{r(k_0-1)}{s_0}+\sum_{j=1}^r(k-k_j)\phi_j=
\sum_{m=1}^r\frac{k-m-1}{s-m}.$$
Thus, collecting together our formulae (\ref{7.10}) and (\ref{7.11}) for $\mu^*$ and $\nu^*$ within 
(\ref{7.8}), we obtain the upper bound
$$\llbracket K_{a,b}^{k-1}(X)\rrbracketsub{{\Del^\dag,0}}\ll M^{b-ka}
\left( (X/M^b)^{\Lam^\dag+\del}\right)^{\phi^*}
\prod_{j=0}^r\llbracket I_{b,k_jb}^{k-1}(X)\rrbracketsub{{\Del^\dag,1}}^{\phi_j}.$$
The conclusion of the lemma now follows from (\ref{2.25}), since the latter implies the relation
$$M^{ka-b}\llbracket K_{a,b}^{k-1}(X)\rrbracketsub{{\Del^\dag,0}}=
\llbracket K_{a,b}^{k-1}(X)\rrbracketsub{{\Del^\dag,1}}.$$
\end{proof}

Our penultimate result in this section is a reconfiguration of Lemma \ref{lemma7.2} that facilitates an 
alternative bound equipped with equally weighted exponents. In this context, we recall the definitions 
(\ref{2.8}) and (\ref{2.27}) of $\Del_1(\nu)$ and $\Lam(\Del)$.

\begin{lemma}\label{lemma7.3}
Suppose that $a$ and $b$ are integers with $0\le a<b\le \tet^{-1}$ and $b\ge (r-1)a$. Then one has
$$\llbracket K_{a,b}^{k-1}(X)\rrbracketsub{{\Del,1}}\ll 
\left( (X/M^b)^{\Lam+\del}\right)^{(s-k+1)/s}
\llbracket I_{b,k_0b}^{k-1}(X)\rrbracketsub{{\Del,1}}^{(k-r-1)/s}
\prod_{j=1}^r\llbracket I_{b,k_jb}^{k-1}(X)\rrbracketsub{{\Del,1}}^{1/s},$$
where $\Del=\Del_1(\nu)$ and $\Lam=\Lam(\Del_1(\nu))$.
\end{lemma}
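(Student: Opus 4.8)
\emph{Plan.} As the wording of the statement suggests, the intention is to reconfigure the argument behind Lemma~\ref{lemma7.2} so as to produce the equally distributed weights $(k-r-1)/s$ and $1/s$ in place of the $\phi_j$, and the normalising exponent $\Del_1(\nu)$ in place of $\Del^\dag$. Two elementary facts are the working tools. First, from (\ref{2.24})--(\ref{2.27}) one reads off, for non-negative $\grd,\grd'$,
$$\llbracket K_{a,b}^{k-1}(X)\rrbracketsub{{\grd',\rho}}=(X/M^a)^{\grd-\grd'}\llbracket K_{a,b}^{k-1}(X)\rrbracketsub{{\grd,\rho}},\qquad \llbracket I_{b,k_jb}^{k-1}(X)\rrbracketsub{{\grd',\rho}}=(X/M^b)^{\grd-\grd'}\llbracket I_{b,k_jb}^{k-1}(X)\rrbracketsub{{\grd,\rho}},$$
together with $\Lam(\grd')=\Lam(\grd)+\grd-\grd'$ and the $\rho$-conversion $\llbracket I_{b,k_jb}^{k-1}(X)\rrbracketsub{{\grd,1}}=M^{(r-j)b}\llbracket I_{b,k_jb}^{k-1}(X)\rrbracketsub{{\grd,0}}$. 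Second, a crude comparison of the $I_{b,k_jb}^{k-1}(X)$ at different levels: for $i<j$, restricting the congruence on $\bfv,\bfw$ from $p^{k_ib}$ to $p^{k_jb}$ gives $I_{b,k_jb}^{k-1}(X)\le I_{b,k_ib}^{k-1}(X)$, while writing $\grf_{k_ib}(\bfalp;\eta)$ as a sum of $p^{(k_j-k_i)b}$ functions $\grf_{k_jb}(\bfalp;c)$ and applying H\"older (as in (\ref{6.2})) yields $I_{b,k_ib}^{k-1}(X)\ll M^{2s(k_j-k_i)b}I_{b,k_jb}^{k-1}(X)$. In the $\llbracket\,\cdot\,\rrbracketsub{{\grd,0}}$ normalisation these two bounds combine, after cancelling the factor $M^{2s(k_j-k_i)b}$ built into the normalisation, to give $\llbracket I_{b,k_ib}^{k-1}(X)\rrbracketsub{{\grd,0}}\ll\llbracket I_{b,k_jb}^{k-1}(X)\rrbracketsub{{\grd,0}}$; that is, the normalised mean values $\llbracket I_{b,k_jb}^{k-1}(X)\rrbracketsub{{\grd,0}}$ are non-decreasing in $j$, up to constants.

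With these in hand I would begin from the conclusion of Lemma~\ref{lemma7.1}, normalise it exactly as in the proof of Lemma~\ref{lemma7.2} but leaving the normalising exponent $\grd$ free, and perform the redistribution of weights at the level of the $\rho=0$ normalisation, where the monotonicity just recorded is available. A short calculation with (\ref{7.1})--(\ref{7.2}) shows that $\sum_{j\ge J}\phi_j=(s-k+1)(r-J+1)/(s\,s_{J-1})\le (r-J+1)/s=\sum_{j\ge J}s^{-1}$ for every $J\ge1$ (using $s-k+1\le s_{J-1}$, which holds since $r\le k-2$); since the weight vectors $(\phi_0,\ldots,\phi_r)$ and $((k-r-1)/s,1/s,\ldots,1/s)$ have the same total $(k-1)/s$ and the former is majorised from the right by the latter, H\"older's inequality applied to the non-decreasing quantities $\llbracket I_{b,k_jb}^{k-1}(X)\rrbracketsub{{\grd,0}}$ legitimises the replacement of $\prod_j\llbracket I_{b,k_jb}^{k-1}(X)\rrbracketsub{{\grd,0}}^{\phi_j}$ by $\llbracket I_{b,k_0b}^{k-1}(X)\rrbracketsub{{\grd,0}}^{(k-r-1)/s}\prod_{j\ge1}\llbracket I_{b,k_jb}^{k-1}(X)\rrbracketsub{{\grd,0}}^{1/s}$ at the cost of a bounded constant. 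Converting back to $\rho=1$ and invoking (\ref{2.11}) to bound the $J_\grw(X/M^b)$ factor then leaves a bound of exactly the shape asserted, save for a power of $M$ whose exponent depends on $\grd$ through the analogues, for the equal weights, of the quantities $\mu^*$ and $\nu^*$ computed in the proof of Lemma~\ref{lemma7.2}.

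The real content, and the main obstacle, is to verify that taking $\grd=\Del_1(\nu)$ — hence $\Del=\Del_1(\nu)$ in the operator and $\Lam=\Lam(\Del_1(\nu))$ in the $(X/M^b)^{\Lam+\del}$ factor — makes this residual power of $M$ disappear. This reduces to recomputing the counterparts of $\mu^*$ and $\nu^*$ with the weights $(k-r-1)/s$ and $1/s$ replacing the $\phi_j$, using from \S7 the identities $\sum_{j=0}^r(k-k_j)\phi_j=\sum_{m=1}^r\frac{k-m-1}{s-m}$ and $\sum_{m=1}^r(k-m-1)=\tfrac12 r(2k-r-3)$, and checking that the $b$- and $a$-exponents both vanish; the definition (\ref{2.8}) of $\Del_1(\nu)$ is tailored precisely to this balance, in the same way that $\Del^\dag$ is tailored to the balance in Lemma~\ref{lemma7.2}. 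I would expect this last step to be the most delicate, and to require handling with some care the interplay between the normalisation shift $\Del^\dag\to\Del_1(\nu)$, the $\rho$-conversions, and the constants lost in the H\"older redistribution; it is conceivable that a marginally sharper form of the level-comparison of the $I_{b,k_jb}^{k-1}(X)$ than the crude one above is needed to make everything fit.
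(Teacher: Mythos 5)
Your preparatory steps are essentially sound: the monotonicity $\llbracket I_{b,k_ib}^{k-1}(X)\rrbracketsub{{\grd,0}}\ll\llbracket I_{b,k_jb}^{k-1}(X)\rrbracketsub{{\grd,0}}$ for $i<j$ does follow from a H\"older subdivision as in (\ref{6.2}), the partial-sum comparison $\sum_{j\ge J}\phi_j\le(r-J+1)/s$ is correct, and the Abel-summation/majorisation step legitimately converts $\prod_j\llbracket I_{b,k_jb}^{k-1}(X)\rrbracketsub{{\grd,0}}^{\phi_j}$ into the equally weighted product, up to constants. But the final step, which you rightly flagged as the delicate one, fails: no choice of $\grd$ removes the residual power of $M$. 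Tracking exponents, the redistribution costs exactly
$$\sum_{j=0}^r(k-k_j)\phi_j-\Bigl(\frac{r(k-r-1)}{s}+\sum_{j=1}^r\frac{r-j}{s}\Bigr)=\sum_{m=1}^r\frac{k-m-1}{s-m}-\frac1s\sum_{m=1}^r(k-m-1)=\frac{\nu_0(r,s)}{s}>0$$
in the exponent of $M^b$ upon passing between the $\rho=0$ and $\rho=1$ normalisations, while shifting the normalising exponent from $\Del^\dag$ to a general $\grd$ merely transfers $\grd-\Del^\dag$ from the $M^b$-exponent to the $M^a$-exponent; their sum remains $\nu_0(r,s)/s$ whatever $\grd$ is. Since $0\le a<b$, your route therefore proves the lemma only with an extraneous factor $M^{b\nu_0(r,s)/s}$ on the right-hand side (equivalently with $\Del$ inflated to roughly $\Del^\dag+\nu_0(r,s)/s$, which exceeds $\Del_1(\nu)$), and no ``marginally sharper'' level-comparison of the crude H\"older type can repair this, as that comparison is already worst-case sharp.

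The idea you are missing is that the multigrade bound must be invoked not once but at every depth $\rho=0,1,\ldots,r$, that is, with $r$ replaced by $\rho$ throughout Lemma \ref{lemma7.2}, so that the depth-$\rho$ bound carries the smaller normalising exponent $\Del_\rho=\sum_{m\le\rho}(m-1)(k-m-1)/(s-m)$ and involves only the levels $(k-\rho)b,\ldots,kb$. The paper writes $\llbracket K_{a,b}^{k-1}(X)\rrbracketsub{{0,1}}$ as a geometric mean of itself with weights $(s-r)/s$ (to which the depth-$r$ bound is applied) and $1/s$ for each $\rho=0,\ldots,r-1$ (to which the depth-$\rho$ bound is applied). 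The bookkeeping then shows that level $(k-r+j)b$ receives total weight exactly $1/s$ for $j\ge1$ and $(k-r-1)/s$ for $j=0$, while the normalising exponents average to $\frac1s\bigl((s-r)\Del_r+\sum_{\rho<r}\Del_\rho\bigr)=\Del_1(\nu)$ and the $(M^{b-a})^{\Del_\rho}$ prefactors combine to cancel exactly against the conversion from $\llbracket\,\cdot\,\rrbracketsub{{0,1}}$ back to $\llbracket\,\cdot\,\rrbracketsub{{\Del,1}}$. It is this averaging over depths --- genuinely new information at each $\rho<r$, not derivable from the depth-$r$ bound by rearrangement --- that simultaneously equalises the weights and improves the exponent from $\Del^\dag$ to $\Del_1(\nu)$.
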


\begin{proof} Write
\begin{equation}\label{7.12}
\Del_\rho=\sum_{m=1}^\rho \frac{(m-1)(k-m-1)}{s-m}.
\end{equation}
Then on recalling (\ref{2.24}) and (\ref{2.25}), we find that
\begin{equation}\label{7.13}
\llbracket I_{b,(k-\rho+j)b}^{k-1}(X)\rrbracketsub{{0,1}}=(X/M^b)^{\Del_\rho}
\llbracket I_{b,(k-\rho+j)b}^{k-1}(X)\rrbracketsub{{\Del_\rho,1}}\quad (0\le j\le \rho)
\end{equation}
and
\begin{equation}\label{7.14}
\llbracket K_{a,b}^{k-1}(X)\rrbracketsub{{0,1}}=(X/M^a)^{\Del_\rho}
\llbracket K_{a,b}^{k-1}(X)\rrbracketsub{{\Del_\rho,1}}.
\end{equation}
By reference to (\ref{6.16}) and (\ref{7.1}) to (\ref{7.3}), it therefore follows from Lemma 
\ref{lemma7.2} that for $0\le \rho\le r$, one has
\begin{align}
\llbracket K_{a,b}^{k-1}(X)\rrbracketsub{{0,1}}\ll &\, (M^{b-a})^{\Del_\rho}
\left( (X/M^b)^{\Lam(0)+\del}\right)^{\tfrac{s-k+1}{s}}\llbracket I_{b,(k-\rho)b}^{k-1}
(X)\rrbracketsub{{0,1}}^{\tfrac{k-\rho-1}{s-\rho}} \notag\\
&\, \times \prod_{j=1}^\rho \llbracket I_{b,(k-\rho+j)b}^{k-1}(X)\rrbracketsub{{0,1}}^{\tfrac{s-k+1}
{(s-\rho+j-1)(s-\rho+j)}}.\label{7.15}
\end{align}
By applying this bound successively for $\rho=r, r-1,\ldots ,0$, we deduce that
\begin{align*}
\llbracket K_{a,b}^{k-1}(X)\rrbracketsub{{0,1}}=&\, 
\llbracket K_{a,b}^{k-1}(X)\rrbracketsub{{0,1}}^{(s-r)/s}
\prod_{\rho=0}^{r-1}\llbracket K_{a,b}^{k-1}(X)\rrbracketsub{{0,1}}^{1/s}\\
\ll &\, (M^{b-a})^{\Del^*/s}
\left( (X/M^b)^{\Lam(0)+\del}\right)^{\tfrac{s-k+1}{s}}\llbracket I_{b,(k-r)b}^{k-1}
(X)\rrbracketsub{{0,1}}^{(k-r-1)/s}\\
&\, \times \prod_{j=1}^r\llbracket I_{b,(k-r+j)b}^{k-1}(X)\rrbracketsub{{0,1}}^{\ome_j/s},
\end{align*}
where
$$\Del^*=(s-r)\Del_r+\sum_{\rho=0}^{r-1}\Del_\rho$$
and
\begin{align*}
\ome_j=&\, \frac{(s-r)(s-k+1)}{(s-r+j-1)(s-r+j)}+\frac{k-r+j-1}{s-r+j}\\
&\, +\sum_{\rho=r-j+1}^{r-1}\frac{s-k+1}{(s-r+j-1)(s-r+j)}.
\end{align*}

We observe first that, as a consequence of (\ref{7.12}), one has
\begin{align*}
\Del^*&\, =(s-r)\sum_{m=1}^r\frac{(m-1)(k-m-1)}{s-m}+\sum_{\rho=0}^{r-1}
\sum_{m=1}^\rho \frac{(m-1)(k-m-1)}{s-m}\\
&\,=(s-r)\sum_{m=1}^r\frac{(m-1)(k-m-1)}{s-m}+\sum_{m=1}^r
\frac{(m-1)(k-m-1)}{s-m}\sum_{\rho=m}^{r-1}1\\
&\,=\sum_{m=1}^r(m-1)(k-m-1).
\end{align*}
Recalling the definition (\ref{2.8}) of $\Del_1(\nu)$, therefore, we conclude that $\Del^*=s\Del$. Also, 
one sees that
$$\ome_j=\frac{s-k+1}{s-r+j}+\frac{k-r+j-1}{s-r+j}=1.$$
Thus we infer that
\begin{align*}
M^{\Del a}\llbracket K_{a,b}^{k-1}(X)\rrbracketsub{{0,1}}\ll &\, M^{\Del b} 
\left( (X/M^b)^{\Lam(0)+\del}\right)^{\tfrac{s-k+1}{s}}\llbracket I_{b,(k-r)b}^{k-1}
(X)\rrbracketsub{{0,1}}^{(k-r-1)/s}\\
&\, \times \prod_{j=1}^r\llbracket I_{b,(k-r+j)b}^{k-1}(X)\rrbracketsub{{0,1}}^{1/s},
\end{align*}
and the conclusion of the lemma follows on recalling the relations (\ref{7.13}) and (\ref{7.14}), though
 now with $\Del_\rho$ replaced by $\Del$.
\end{proof}

Finally, we extract from Lemma \ref{lemma7.2} a conclusion related to that of Lemma \ref{lemma7.3}, but 
one that makes available estimates utilising the full power underlying our methods. In this context, it is 
useful to observe that our analysis of the iteration process in \S9 requires that we work with an integral 
number of variables. The interpolation residing in the next lemma addresses this requirement. Before 
announcing this refinement, we recall the definitions (\ref{2.5}) and (\ref{2.7}) of $\nu_0(r,s)$ and 
$\Del_0(\nu)$, and also the definition (\ref{7.1}) of $\phi_j$.

\begin{lemma}\label{lemma7.4} Let $r$ and $\nu$ be integers with $r\ge 2$ and $0\le \nu\le \nu_0(r,s)$, 
and put
$$\sig=\frac{(\nu_0(r,s)-\nu)(s-r)}{(k-r-1)s}.$$
Suppose that $a$ and $b$ are integers with $0\le a<b\le \tet^{-1}$ and $b\ge (r-1)a$. Then one has
$$\llbracket K_{a,b}^{k-1}(X)\rrbracketsub{{\Del,1}}\ll 
\left( (X/M^b)^{\Lam+\del}\right)^{(s-k+1)/s}
\prod_{j=0}^r\llbracket I_{b,k_jb}^{k-1}(X)\rrbracketsub{{\Del,1}}^{\phi_j(\sig)},$$
where $\Del=\Del_0(\nu)$, $\Lam=\Lam(\Del_0(\nu))$, and
\begin{align}
\phi_0(\sig)&=\frac{(k-r-1)(1-\sig)}{s-r},\label{7.16}\\
\phi_1(\sig)&=\frac{(s-k+1)(1-\sig)}{(s-r)(s-r+1)}+\frac{(k-r)\sig}{s-r+1},\label{7.17}\\
\phi_j(\sig)&=\phi_j\quad (2\le j\le r).\label{7.18}
\end{align}
\end{lemma}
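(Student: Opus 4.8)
The plan is to deduce Lemma \ref{lemma7.4} from Lemma \ref{lemma7.2} by interpolating in the single exponent $\phi_0$, spending a controlled power of $I_{b,k_0b}^{k-1}(X)$ against an application of Lemma \ref{lemma7.3}-style bookkeeping on the remaining factors. The starting point is the conclusion of Lemma \ref{lemma7.2}, which with $\Del=\Del^\dag$ reads
\[
\llbracket K_{a,b}^{k-1}(X)\rrbracketsub{{\Del^\dag,1}}\ll
\left( (X/M^b)^{\Lam(\Del^\dag)+\del}\right)^{\phi^*}
\prod_{j=0}^r\llbracket I_{b,k_jb}^{k-1}(X)\rrbracketsub{{\Del^\dag,1}}^{\phi_j}.
\]
Here $\sum_{j=0}^r\phi_j=1-\phi^*$, so the exponents already sum correctly. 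The target exponents $\phi_j(\sig)$ in \eqref{7.16}--\eqref{7.18} differ from the $\phi_j$ only in the indices $j=0$ and $j=1$, and one checks directly from the definitions that $\sum_{j=0}^r\phi_j(\sig)=\sum_{j=0}^r\phi_j$, so the substitution preserves the homogeneity needed for the $\llbracket\,\cdot\,\rrbracketsub{{\Del,1}}$ normalisation to balance. The parameter $\sig$ interpolates linearly between $\phi_j(\0)=\phi_j$ (recovering Lemma \ref{lemma7.2}) and the extreme choice, and the admissible range $0\le\nu\le\nu_0(r,s)$ translates precisely into $0\le\sig\le 1$ via the stated formula $\sig=(\nu_0(r,s)-\nu)(s-r)/((k-r-1)s)$.

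The mechanism for the interpolation is Lemma \ref{lemma2.2} (translation-dilation invariance) applied to the mean value $I_{b,k_0b}^{k-1}(X)$: since $k_0=k-r$, a portion of the weight attached to $\grf_{k_0b}(\bfalp;\cdot)^{2s}$ can be traded, via H\"older, for weight on $\grf_{k_1b}(\bfalp;\cdot)^{2s}$, converting a $\sig$-fraction of the $I_{b,k_0b}^{k-1}$ factor into an $I_{b,k_1b}^{k-1}$ factor at a cost that is exactly absorbed by the change $\Del^\dag\rightsquigarrow\Del_0(\nu)$ in the target exponent. Concretely, I would interpolate the bound of Lemma \ref{lemma7.2} (giving $\phi_0$ on the $k_0b$-level) against an auxiliary bound obtained by pushing one more application of Lemma \ref{lemma6.2}-type congruencing — equivalently by using the $\rho=r$ case of the display \eqref{7.15} appearing in the proof of Lemma \ref{lemma7.3} — which redistributes weight from level $k_0b$ onto level $k_1b$. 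Taking the convex combination with weights $1-\sig$ and $\sig$ produces $\phi_0(\sig)=(1-\sig)\phi_0$ and $\phi_1(\sig)=(1-\sig)\phi_1+\sig\cdot(k-r)/(s-r+1)$, which is exactly \eqref{7.16}--\eqref{7.17} once one substitutes $\phi_0=(k-r-1)/(s-r)$ and $\phi_1=(s-k+1)/((s-r)(s-r+1))$ from \eqref{7.1}--\eqref{7.2}. The remaining exponents $\phi_j$ for $2\le j\le r$ are untouched, giving \eqref{7.18}.

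The bookkeeping for the $\Del$-exponent proceeds exactly as in the proof of Lemma \ref{lemma7.3}: one passes to the operator $\llbracket\,\cdot\,\rrbracketsub{{0,1}}$ via the analogues of \eqref{7.13}--\eqref{7.14}, carries out the interpolation there where all the $(X/M^b)$ and $(X/M^a)$ powers are explicit, and then translates back. The key identity to verify is that the accumulated $(X/M^b)$-power from the $\sig$-weighted branch, together with the $\Del^\dag$ already present, reassembles into $\Del_0(\nu)$; this is where the precise form of $\nu_0(r,s)$ from \eqref{2.5} and of $\Del_0(\nu)$ from \eqref{2.7} enters, the extra term $-(\nu_0(r,s)-\nu)(r-1)/s$ in \eqref{2.7} being produced by the $\sig$-fraction of the congruencing step. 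I expect the main obstacle to be precisely this last arithmetic reconciliation: one must check that the $M$-power exponents on both the $b$-side and the $a$-side — after the convex combination and after re-normalising by $M^{\Del a}$ versus $M^{\Del b}$ — collapse to the clean form $M^{b-ka}$ demanded by \eqref{2.25}, just as $\mu^*$ and $\nu^*$ collapsed to $1$ and $-k$ in \eqref{7.10}--\eqref{7.11}. Verifying that the $\sig$-dependent contributions cancel identically (so that the final $M$-exponents are genuinely independent of $\sig$) is the computational heart of the proof; everything else is a routine repackaging of Lemmata \ref{lemma7.2} and \ref{lemma2.2}.
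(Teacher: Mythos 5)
Your proposal is correct and takes essentially the same route as the paper: the actual proof writes $\llbracket K_{a,b}^{k-1}(X)\rrbracketsub{{0,1}}$ as the product of its $(1-\sig)$th and $\sig$th powers and applies the display \eqref{7.15} with $\rho=r$ to the first factor and $\rho=r-1$ to the second (your ``auxiliary bound'' redistributing weight from level $k_0b$ to level $k_1b$ is the $\rho=r-1$ case, not $\rho=r$ as written, though the exponent $(k-r)/(s-r+1)$ you record for it is the correct one), after which the identity $(1-\sig)\Del_r+\sig\Del_{r-1}=\Del_0(\nu)$ yields the stated value of $\Del$ exactly as you predict. The one point you gloss over is the verification that $\sig<1$ (your assertion that $0\le\nu\le\nu_0(r,s)$ translates ``precisely'' into $0\le\sig\le1$ is not literally true), which the paper checks by a short computation from \eqref{2.4} and \eqref{2.5} so that the convex combination is legitimate.
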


\begin{proof} We adapt the argument of the proof of Lemma \ref{lemma7.3}, making use of the notation 
(\ref{7.12}) and the relations (\ref{7.13}) to (\ref{7.15}). By reference to (\ref{2.4}) and (\ref{2.5}), one 
finds that when $k\ge 4$, one has
\begin{align*}
\sig&\le \frac{s-r}{(k-r-1)s}\sum_{m=1}^r\frac{m(k-m-1)}{s-m}\\
&\le \sum_{1\le m\le (k-1)/2}\frac{k-m-1}{s}+\frac{(k+1)/2}{s}.
\end{align*}
In this way, it is easily verified that when $k\ge 3$, the parameter $\sig$ satisfies $0\le \sig<1$, and 
hence, by applying (\ref{7.15}) for $\rho=r$ and $r-1$, we deduce that
\begin{align*}
\llbracket K_{a,b}^{k-1}(X)\rrbracketsub{{0,1}}=&\, 
\llbracket K_{a,b}^{k-1}(X)\rrbracketsub{{0,1}}^{1-\sig}
\llbracket K_{a,b}^{k-1}(X)\rrbracketsub{{0,1}}^{\sig}\\
\ll &\, (M^{b-a})^{\Del^*}\left( (X/M^b)^{\Lam(0)+\del}\right)^{\tfrac{s-k+1}{s}}
\prod_{j=0}^r\llbracket I_{b,(k-r+j)b}^{k-1}(X)\rrbracketsub{{0,1}}^{\phi_j(\sig)},
\end{align*}
where
$$\Del^*=(1-\sig)\Del_r+\sig \Del_{r-1}.$$
By making use of the definitions (\ref{2.5}) and (\ref{2.7}), we find that
$$\Del^*=\Del_r-\left( \frac{(\nu_0(r,s)-\nu)(s-r)}{(k-r-1)s}\right)
\left( \frac{(r-1)(k-r-1)}{s-r}\right)=\Del_0(\nu).$$
The conclusion of the lemma follows on recalling the relations (\ref{7.13}) and (\ref{7.14}), though in this 
instance we replace $\Del_\rho$ by the value of $\Del$ given in the statement of the lemma.
\end{proof}

\section{The latent monograde process} The estimates supplied by Lemmata \ref{lemma7.2}, 
\ref{lemma7.3} and \ref{lemma7.4} could, in principle, be applied in an iterative manner so as to bound
 $K_{a,b}^{k-1}(X)$ in terms of the $r+1$ mean values $K_{b,k_jb}^{k-1}(X)$ $(0\le j\le r)$, each of
 which could be bounded in terms of $r+1$ new mean values of the shape $K_{b',k_jb'}^{k-1}(X)$ 
$(0\le j\le r)$, and so on. This, indeed, is the strategy proposed in the speculative heuristic argument
 described in \cite[\S11]{Woo2012a}. After $N$ iterations, one then has a bound for $K_{a,b}^{k-1}(X)$
 in terms of $(r+1)^N$ new mean values, and one is left with the task of analysing the consequences of
 this iteration. Since we have yet to take account of the need to condition the mean values
 $I_{b,k_jb}^{k-1}(X)$ $(0\le j\le r)$ occurring as intermediate steps in this process, the complexity of 
this analysis would be formidable indeed. Fortunately, we are able to make use of a simplified analysis by 
focusing attention on just one of the $r+1$ mean values at each stage, this being achieved by applying a 
weighted version of an estimate related to H\"older's inequality (see Lemma \ref{lemma8.1} below). In 
this way, the complicated product of mean values produced by Lemma \ref{lemma7.4} is bounded in 
terms of a sum of mean values, and one may then focus on the single summand which is maximal. Thus, 
it transpires that one may convert the multigrade iteration into a monograde process that loses none of the 
potential of a full-blown analysis.

\begin{lemma}\label{lemma8.1}
Suppose that $z_0,\ldots ,z_r\in \dbC$, and that $\bet_i$ and $\gam_i$ are positive real numbers for 
$0\le i\le r$. Put $\Ome=\bet_0\gam_0+\ldots +\bet_r\gam_r$. Then one has
$$|z_0^{\bet_0}\ldots z_r^{\bet_r}|\le \sum_{i=0}^r|z_i|^{\Ome/\gam_i}.$$
\end{lemma}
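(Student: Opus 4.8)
The plan is to reduce the claimed inequality to the weighted arithmetic--geometric mean inequality. First I would dispose of the degenerate case in which some $z_i$ vanishes: here the left-hand side is zero, since each $\bet_i$ is positive, while the right-hand side is a sum of non-negative terms, so the inequality is immediate. Thus one may assume that $z_i\ne 0$ for all $i$, and, on replacing $z_i$ by $\abs{z_i}$ throughout, that each $z_i$ is a positive real number.

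Next I would introduce the substitution $w_i=\abs{z_i}^{\Ome/\gam_i}$, so that $\abs{z_i}=w_i^{\gam_i/\Ome}$, and hence
$$\abs{z_0^{\bet_0}\cdots z_r^{\bet_r}}=\prod_{i=0}^r\abs{z_i}^{\bet_i}=\prod_{i=0}^r w_i^{\bet_i\gam_i/\Ome}.$$
On writing $\tet_i=\bet_i\gam_i/\Ome$, the definition $\Ome=\bet_0\gam_0+\ldots+\bet_r\gam_r$ shows that $\sum_{i=0}^r\tet_i=1$, and each $\tet_i$ is positive. The weighted arithmetic--geometric mean inequality therefore yields $\prod_{i=0}^r w_i^{\tet_i}\le \sum_{i=0}^r\tet_i w_i$.

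Finally, since $0<\tet_i\le 1$ and $w_i\ge 0$ for every $i$, one has $\sum_{i=0}^r\tet_i w_i\le \sum_{i=0}^r w_i=\sum_{i=0}^r\abs{z_i}^{\Ome/\gam_i}$, which gives the asserted bound. There is essentially no obstacle here; the only points requiring a moment's care are the bookkeeping verification that $\sum_i\tet_i=1$, which is precisely the defining relation for $\Ome$, and the observation that passage from the weighted average $\sum_i\tet_i w_i$ to the full sum $\sum_i w_i$ is legitimate because each weight $\tet_i$ does not exceed $1$ and the $w_i$ are non-negative.
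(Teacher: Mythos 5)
Your proof is correct. The paper's own argument has nearly the same structure: it substitutes $Z_i=z_i^{1/\gam_i}$ and $\tet_i=\bet_i\gam_i$, and then invokes the elementary bound $|Z_0^{\tet_0}\cdots Z_r^{\tet_r}|\le\sum_{i=0}^r|Z_i|^{\tet_0+\cdots+\tet_r}$, which follows at once on replacing each $|Z_i|$ in the product by $\max_j|Z_j|$. You instead normalise the exponents so that they sum to $1$ and appeal to the weighted arithmetic--geometric mean inequality, then discard the weights via $\sum_i\tet_iw_i\le\sum_iw_i$. The two routes are equivalent in strength and in length; the only substantive content either way is the power substitution $|z_i|\mapsto|z_i|^{\Ome/\gam_i}$ together with the bookkeeping identity $\sum_i\bet_i\gam_i=\Ome$, both of which you have exactly right. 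Your explicit disposal of the case in which some $z_i$ vanishes is a small point the paper leaves implicit, and does no harm.
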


\begin{proof} We apply the elementary inequality
$$|Z_0^{\tet_0}\ldots Z_r^{\tet_r}|\le \sum_{i=0}^r|Z_i|^{\tet_0+\ldots +\tet_r}.$$
Thus, on taking $Z_i=z_i^{1/\gam_i}$ and $\tet_i=\bet_i\gam_i$ for $0\le i\le r$, we obtain the bound
$$\prod_{i=0}^r|z_i^{1/\gam_i}|^{\bet_i\gam_i}\le \sum_{i=0}^r(|z_i|^{1/\gam_i})^\Ome .$$
This completes the proof of the lemma.
\end{proof}

Before announcing the lemma that encodes the latent monograde iteration process, we recall the 
definitions (\ref{2.6}) to (\ref{2.8}), put $s_\iota^*=s_\iota(\nu)$, and define 
$\rho_j=\rho_{j,\iota}(k,r,s)$ by
\begin{equation}\label{8.1}
\rho_j=k_js/s_\iota^*\quad (\iota=0,1).
\end{equation}
Note also that, as in all of the work of \S\S2--9, we assume throughout that the parameter $r$ satisfies the 
condition (\ref{2.4}), and also that $s\ge s_\iota^*$.

\begin{lemma}\label{lemma8.2} Let $\iota$ be either $0$ or $1$, and put $\Del=\Del_\iota(\nu)$ and 
$\Lam=\Lam(\Del_\iota (\nu))$. Suppose that $\Lam\ge 0$, and let $a$ and $b$ be integers with 
$$0\le a<b\le (32k\tet)^{-1}\quad \text{and}\quad b\ge (r-1)a.$$
Suppose in addition that there are real numbers $\psi$, $c$ and $\gam$, with
$$0\le c\le (2\del)^{-1}\tet,\quad \gam\ge -b\quad \text{and}\quad \psi\ge 0,$$
such that
\begin{equation}\label{8.2}
X^\Lam M^{\Lam \psi}\ll X^{c\del}M^{-\gam}\llbracket K_{a,b}^{k-1}(X)\rrbracketsub{{\Del,1}}.
\end{equation}
Then, for some integers $j$ and $h$ with $0\le j\le r$ and $0\le h\le 15(k_j-1)b$, one has the upper 
bound
$$X^\Lam M^{\Lam \psi'}\ll X^{c'\del}M^{-\gam'}\llbracket 
K_{b,k_jb+h}^{k-1}(X)\rrbracketsub{{\Del,1}},$$
where
$$\psi'=\rho_j\left(\psi+\left(1-(k-1)/s\right)b\right),\quad c'=\rho_j(c+1),\quad 
\gam'=\rho_j\gam+(\tfrac{4}{3}s-1)h.$$
\end{lemma}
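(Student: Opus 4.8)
The plan is to run the hypothesis (\ref{8.2}) through a short chain of substitutions: insert the multigrade estimate (Lemma \ref{lemma7.3} when $\iota=1$, Lemma \ref{lemma7.4} when $\iota=0$) to trade $\llbracket K_{a,b}^{k-1}(X)\rrbracketsub{\Del,1}$ for a weighted geometric mean of the $r+1$ mixed mean values $\llbracket I_{b,k_jb}^{k-1}(X)\rrbracketsub{\Del,1}$ $(0\le j\le r)$; apply the elementary weighted inequality Lemma \ref{lemma8.1} to collapse this geometric mean to a single index $j$; and then apply the conditioning-and-lifting estimate Lemma \ref{lemma4.2} to that surviving mean value, with the parameter pair $(b,k_jb)$, to convert it back into the mean value $\llbracket K_{b,k_jb+h}^{k-1}(X)\rrbracketsub{\Del,1}$ appearing in the conclusion.

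In more detail: since $0\le a<b\le\tet^{-1}$ and $b\ge(r-1)a$, Lemma \ref{lemma7.3} (resp.\ Lemma \ref{lemma7.4}) applies, and fed into (\ref{8.2}) it bounds $X^\Lam M^{\Lam\psi}$ by $X^{c\del}M^{-\gam}$ times $\bigl((X/M^b)^{\Lam+\del}\bigr)^{(s-k+1)/s}$ times $\prod_{j=0}^r\llbracket I_{b,k_jb}^{k-1}(X)\rrbracketsub{\Del,1}^{\bet_j}$, where $\bet_j$ denotes the exponent that lemma attaches to the $j$-th factor. In Lemma \ref{lemma8.1} one takes $z_j=\llbracket I_{b,k_jb}^{k-1}(X)\rrbracketsub{\Del,1}$ and $\gam_j=k_j$; the heart of this step is the identity $\sum_{j=0}^r\bet_jk_j=s_\iota^*/s$, which follows by direct computation from (\ref{6.16}), (\ref{7.1})--(\ref{7.3}) and the definition (\ref{2.6}) of $s_\iota^*=s_\iota(\nu)$ --- in the case $\iota=0$ this first requires unwinding the $\sig$-dependent exponents (\ref{7.16})--(\ref{7.18}). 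Granting the identity, $\Ome=\sum_j\bet_jk_j=s_\iota^*/s$ and $\Ome/k_j=1/\rho_j$, so Lemma \ref{lemma8.1} gives $\prod_jz_j^{\bet_j}\le\sum_jz_j^{1/\rho_j}$; since the implied constant depends only on $k$ and $r$, a single index $j$ survives and
$$X^\Lam M^{\Lam\psi}\ll X^{c\del}M^{-\gam}\bigl((X/M^b)^{\Lam+\del}\bigr)^{(s-k+1)/s}\llbracket I_{b,k_jb}^{k-1}(X)\rrbracketsub{\Del,1}^{1/\rho_j}.$$
Solving for $\llbracket I_{b,k_jb}^{k-1}(X)\rrbracketsub{\Del,1}$ and raising to the power $\rho_j$ produces the prefactors $\rho_j\gam$, $\rho_jc$ and $\rho_j\bigl(\psi+(1-(k-1)/s)b\bigr)$: since $1-(k-1)/s=(s-k+1)/s$ is precisely the exponent carried by the $(X/M^b)$ term, the surviving powers of $X$ and $M$ combine cleanly, the residual $O(\del)$-loss from the $(X/M^b)^{\del(s-k+1)/s}$ factor is absorbed into the extra unit of $c'=\rho_j(c+1)$ (using $X,M>1$ and $(s-k+1)/s<1$), and the hypothesis $\Lam\ge0$ is used to reduce the power of $X$ to $X^\Lam$.

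Finally, apply Lemma \ref{lemma4.2} with $(a,b)$ replaced by $(b,k_jb)$: since $b\le(32k\tet)^{-1}$ and $k_j\le k$ one has $k_jb+15(k_j-1)b\le(2\tet)^{-1}$, so for some $h$ with $0\le h<15(k_j-1)b$,
$$I_{b,k_jb}^{k-1}(X)\ll(M^h)^{2s/3}K_{b,k_jb+h}^{k-1}(X)+(M^{15(k_j-1)b})^{-s/4}(X/M^{k_jb})^{2s}(X/M^b)^{\lam-2s}.$$
Dividing through by the normalising factor of (\ref{2.24}) and using $2\grw-2(k-1)=2s$ together with the definition (\ref{2.25}), the first term contributes exactly $M^{-(\tfrac{4}{3}s-1)h}\llbracket K_{b,k_jb+h}^{k-1}(X)\rrbracketsub{\Del,1}$ --- the origin of the summand $(\tfrac{4}{3}s-1)h$ in $\gam'$ --- while the second becomes $\ll M^{(r-j)b-\frac{15}{4}s(k_j-1)b-\Lam b}X^\Lam$, since by (\ref{2.27}) the exponent $\lam-2s-2k+2+\tfrac12k(k+1)-\Del$ of $X/M^b$ is exactly $\Lam$. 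If the first term dominates, the claimed bound follows after the bookkeeping above; if the second dominates, then running the chain backwards --- recalling $M=X^\tet$, $X^\del<M^{1/N}$, and that $\tet$ and $\del$ are minuscule relative to $N$ --- one derives $J_\grw(X)\ll X^{\lam-2\del}$, contradicting (\ref{2.10}) for $X=X_l$ large. Hence the first term dominates, completing the proof.

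The points that will demand the real work are: (i) the finite but intricate identity $\sum_j\bet_jk_j=s_\iota^*/s$, especially the $\iota=0$ case with its $\sig$-weighted exponents; (ii) the careful accounting of the many $O(\del)$- and $O(\tet b)$-sized factors across the ``raise to the power $\rho_j$'' step, so that they are genuinely subsumed into $c'$ and $\gam'$; and above all (iii) the disposal of the error term of Lemma \ref{lemma4.2}, the familiar ``progress or contradiction'' dichotomy of efficient congruencing, where the hypotheses $\Lam\ge0$, $b\le(32k\tet)^{-1}$, $c\le(2\del)^{-1}\tet$, and the precise choices (\ref{2.9}) of $\tet$ and $\del$ are all consumed.
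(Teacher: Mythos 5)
Your proposal follows the paper's proof in all essentials: feed (\ref{8.2}) into Lemma \ref{lemma7.4} (for $\iota=0$) or Lemma \ref{lemma7.3} (for $\iota=1$), verify the identity $\sum_j\bet_jk_j=s_\iota(\nu)/s$ so that Lemma \ref{lemma8.1} with $\gam_j=k_j$ isolates a single index $j$ with exponent $1/\rho_j$, raise to the power $\rho_j$, and then condition via Lemma \ref{lemma4.2} with $H=15(k_j-1)b$. The one point where you deviate is the disposal of the error term from Lemma \ref{lemma4.2}: the paper does not argue by contradiction with (\ref{2.10}) here, but absorbs the term $\Psi$ directly, showing $\Psi\ll X^{-\del}\,X^\Lam M^{\Lam\psi'}$ by combining $\gam\ge -b$, $c'\del\le \rho_j\tet-\del$ and the lower bound $s_0(\nu)>k$ to get $(r-j)b-\rho_j\gam+\rho_j\le 2kbs/s_0(\nu)<\tfrac14Hs$; your ``run the chain backwards to $J_\grw(X)$'' step is not actually available inside the lemma, whose hypothesis is only the abstract bound (\ref{8.2}) on $K_{a,b}^{k-1}(X)$ with no link back to $J_\grw(X)$, so the direct absorption is the route you would need to take.
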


\begin{proof} We begin by establishing the lemma when $\iota=0$, the corresponding argument for 
$\iota=1$ being analogous, though simpler. By hypothesis, we have $X^{c\del}<M^{1/2}$. We therefore 
deduce from the postulated bound (\ref{8.2}) and Lemma \ref{lemma7.4} that
$$X^\Lam M^{\Lam \psi}\ll X^{(c+1)\del}M^{-\gam}(X/M^b)^{\Lam \phi^*}\prod_{j=0}^r
\llbracket I_{b,k_jb}^{k-1}(X)\rrbracketsub{{\Del,1}}^{\phi_j(\sig)},$$
where the exponents $\phi^*$ and $\phi_j(\sig)$ are defined by means of (\ref{7.2}) and (\ref{7.16}) to 
(\ref{7.18}). Thus, on verifying that $\phi_0(\sig)+\phi_1(\sig)=\phi_0+\phi_1$, and then making use of 
(\ref{7.3}), we deduce that
\begin{equation}\label{8.3}
\prod_{j=0}^r\left( X^{-\Lam}\llbracket 
I_{b,k_jb}^{k-1}(X)\rrbracketsub{{\Del,1}}\right)^{\phi_j(\sig)}\gg 
X^{-(c+1)\del}M^{\Lam (\psi+\phi^*b)+\gam}.
\end{equation}

\par We next prepare for our application of Lemma \ref{lemma8.1}, but we first examine a related 
situation. Put $\bet_j=\phi_j$ and $\gam_j=k_j$ for $0\le j\le r$. We write 
$\Ome_r=\bet_0\gam_0+\ldots+\bet_r\gam_r$, and recall the relation (\ref{7.9}). Then
 we find from (\ref{6.16}), (\ref{7.1}) and (\ref{7.2}) that
$$\Ome_r=\frac{k_0(k_0-1)}{s_0}+\sum_{j=1}^r\frac{(s-k+1)k_j}{s_{j-1}s_j}
=\frac{k_{r-1}k_r}{s_r}-\sum_{j=1}^r\frac{k_{j-2}}{s_{j-1}}.$$
Thus, on making the change of variable $m=r-j+1$, and referring once more to (\ref{6.16}), we find that
\begin{align*}
s\Ome_r&=k(k-1)-\sum_{m=1}^r(k-m-1)-\sum_{m=1}^r\frac{m(k-m-1)}{s-m}\\
&=k^2-(r+1)k+\tfrac{1}{2}r(r+3)-\sum_{m=1}^r\frac{m(k-m-1)}{s-m}.
\end{align*}
On writing $\Ome_{r-1}$ for the analogue of $\Ome_r$ in which $r$ is replaced by $r-1$, therefore, we 
find that
$$s\left( (1-\sig)\Ome_r+\sig \Ome_{r-1}\right)=k^2-(r+1)k+\tfrac{1}{2}r(r+3)-\nu^+,$$
where
$$\nu^+=\sum_{m=1}^r\frac{m(k-m-1)}{s-m}-\sig \left( k-r-1+\frac{r(k-r-1)}{s-r}\right) .$$ 
By reference to (\ref{2.5}) and the definition of $\sig$ from Lemma \ref{lemma7.4}, we find that
$$\nu^+=\nu_0(r,s)-\frac{(\nu_0(r,s)-\nu)(s-r)}{s}\left( 1+\frac{r}{s-r}\right) =\nu,$$
and hence it follows from (\ref{2.6}) that
$$s\left( (1-\sig)\Ome_r+\sig \Ome_{r-1}\right)=s_0(\nu).$$

\par With the last equation in hand, we now write $\bet_j=\phi_j(\sig)$ and $\gam_j=k_j$ for 
$0\le j\le r$, and put $\Ome=\bet_0\gam_0+\ldots +\bet_r\gam_r$. Then we find from (\ref{7.1}), 
(\ref{7.2}) and (\ref{7.16}) to (\ref{7.18}) that
$$\Ome =(1-\sig)\Ome_r+\sig \Ome_{r-1}=s_0(\nu)/s.$$
A comparison with (\ref{8.1}) therefore reveals that
$$\Ome/k_j=s_0^*/(sk_j)=1/\rho_j.$$
By wielding Lemma \ref{lemma8.1} against (\ref{8.3}), we thus deduce that
$$\sum_{j=0}^r\left( X^{-\Lam}\llbracket 
I_{b,k_jb}^{k-1}(X)\rrbracketsub{{\Del,1}}\right)^{1/\rho_j}\gg X^{-(c+1)\del}
M^{\Lam (\psi+\phi^*b)+\gam}.$$
Consequently, for some index $j$ with $0\le j\le r$, one has
$$X^{-\Lam}\llbracket I_{b,k_jb}^{k-1}(X)\rrbracketsub{{\Del,1}}\gg
 X^{-\rho_j(c+1)\del}M^{\Lam \rho_j(\psi+\phi^*b)+\rho_j\gam},$$
whence, by reference to (\ref{7.2}), we conclude that
\begin{equation}\label{8.4}
X^\Lam M^{\Lam \psi'}\ll X^{c'\del}M^{-\rho_j\gam}\llbracket 
I_{b,k_jb}^{k-1}(X)\rrbracketsub{{\Del,1}}.
\end{equation}

\par We fix the integer $j$ so that the upper bound (\ref{8.4}) holds, and write $H=15(k_j-1)b$. The 
estimate (\ref{8.4}) comes close to achieving the bound claimed in the conclusion of the lemma, though it
 remains to condition the mean value $I_{b,k_jb}^{k-1}(X)$ so as to replace it with a suitable mean value 
of the form $K_{b,c}^{k-1}(X)$. As a consequence of Lemma \ref{lemma4.2}, there exists an integer $h$ 
with $0\le h<H$ such that
$$I_{b,k_jb}^{k-1}(X)\ll (M^h)^{2s/3}K_{b,k_jb+h}^{k-1}(X)+(M^H)^{-s/4}(X/M^{k_jb})^{2s}
(X/M^b)^{\lam-2s}.$$
Then in view of (\ref{6.16}), we infer from (\ref{2.24}) and (\ref{2.25}) that
$$\llbracket I_{b,k_jb}^{k-1}(X)\rrbracketsub{{\Del,1}}\ll M^\ome \llbracket 
K_{b,k_jb+h}^{k-1}(X)\rrbracketsub{{\Del,1}}+(M^H)^{-s/4}M^{(r-j)b}X^\Lam,$$
where
$$\ome=h+2sh/3-2sh=(1-4s/3)h.$$
Substituting this estimate into (\ref{8.4}), therefore, we deduce that
\begin{equation}\label{8.5}
X^\Lam M^{\Lam \psi'}\ll X^{c'\del}M^{-\gam'}\llbracket 
K_{b,k_jb+h}^{k-1}(X)\rrbracketsub{{\Del,1}}+\Psi,
\end{equation}
where
\begin{equation}\label{8.6}
\Psi=(M^H)^{-s/4}M^{(r-j)b-\rho_j\gam}X^{\Lam+c'\del}.
\end{equation}

\par We now set about analysing the term $\Psi$, and seek to show that it makes a negligible contribution 
in (\ref{8.5}). Observe that from (\ref{2.4}) and (\ref{2.5}), one has
$$\nu_0(r,s)\le \frac{r(r+1)(k-2)/2}{k(k+1)/2}<\tfrac{1}{2}r,$$
so that (\ref{2.6}) delivers the lower bound
$$s_0^*\ge k^2-(r+1)k+\tfrac{1}{2}r(r+2)>(k-\tfrac{1}{2}(r+1))^2-1\ge k.$$
Our ambient hypotheses ensure also that $k_j\ge 2$, and thus $H=15(k_j-1)b\ge 15b$. We therefore 
obtain the lower bound
$$\tfrac{1}{4}Hs\ge 3bs\ge 3kbs/s_0^*.$$
Meanwhile, from (\ref{6.16}) and (\ref{8.1}), one has
\begin{align*}
\rho_j+(r-j)b-\rho_j\gam &\le \left(2k_j+(r-j)s_0^*/s\right) bs/s_0^*\\
&\le 2kbs/s_0^*<\tfrac{1}{4}Hs.
\end{align*}
Since our hypotheses ensure also that
$$c'\del\le \rho_j(c+1)\del \le \rho_j\tet-\del,$$
we conclude from (\ref{8.6}) that
$$\Psi\ll (M^H)^{-s/4}M^{(r-j)b-\rho_j\gam +\rho_j}X^{\Lam-\del}\ll X^{-\del}
\left( X^\Lam M^{\Lam \psi'}\right).$$
The conclusion of the lemma for $\iota=0$ follows by substituting this estimate into (\ref{8.5}).\par

We now turn to the situation with $\iota=1$. We proceed as before, but now deduce from Lemma 
\ref{lemma7.3} that the lower bound (\ref{8.3}) holds with the exponents $\phi_j(\sig)$ in this instance 
modified so that
$$\phi_0(\sig)=(k-r-1)/s\quad \text{and}\quad \phi_j(\sig)=1/s\quad (1\le j\le r).$$
With this modification in hand, and $\bet_j$ and $\gam_j$ defined in the same manner as before, we find 
by reference to (\ref{2.5}) and (\ref{2.6}) that
$$s\Ome=(k-r-1)(k-r)+\sum_{j=1}^r(k-r+j)=s_1^*.$$
Thus, with the modified definitions automatically implied by our shift from $\iota=0$ to $\iota=1$, one
 finds that (\ref{8.4}) holds also in this case. From here we may follow precisely the same argument as in 
the case $\iota=0$, delivering again the conclusion of the lemma in this second case.
\end{proof}

\section{The iterative process} In common with our previous efficient congruencing methods, the
 conclusion of Lemma \ref{lemma8.2} provides the basis for a concentration argument. Thus, if the mean 
value $K_{a,b}^{k-1}(X)$ is significantly larger than its ``expected'' magnitude, then for some index $j$ 
and a suitable non-negative integer $h$, the related mean value $K_{b,k_jb+h}^{k-1}(X)$ exceeds its 
``expected'' magnitude by an even larger margin. By iterating this process, we amplify this excess to the 
point that we obtain a contradiction. Since Lemma \ref{lemma5.1} bounds $J_\grw(X)$ in terms of 
$K_{0,1+h}^{k-1}(X)$, for some $h\in \{0,1,2,3\}$, we are able to infer that $J_\grw(X)$ is very close to 
its ``expected'' magnitude. The main difficulty we face in this paper, as opposed to previous work  
\cite{FW2013, Woo2012a, Woo2013}, is that the modulus amplification factor $k_j$ varies from one 
iteration to the next. Fortunately, with care, our previous analyses may be adapted to accommodate this 
complication. We begin by recalling a crude upper bound for $K_{a,b}^{k-1}(X)$.

\begin{lemma}\label{lemma9.1}
Suppose that $a$ and $b$ are integers with $0\le a<b\le (2\tet)^{-1}$, and let $\Lam=\Lam(\Del)$. Then 
provided that $\Lam\ge 0$, one has
$$\llbracket K_{a,b}^{k-1}(X)\rrbracketsub{{\Del,1}}\ll X^{\Lam+\del}(M^{b-a})^sM^{ka-b}.$$
\end{lemma}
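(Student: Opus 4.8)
The plan is to derive this crude bound by throwing away every conditioning constraint and then invoking translation-dilation invariance through Lemma \ref{lemma2.2}. First I would read off the Diophantine meaning of $K_{a,b}^{k-1}(X;\xi,\eta)$ from (\ref{2.17}) with $m=k-1$: rewriting the equations there in the form
$$\sum_{i=1}^{k-1}x_i^j+\sum_{l=1}^{k-1}v_l^j+\sum_{n=1}^{s-k+1}z_n^j=\sum_{i=1}^{k-1}y_i^j+\sum_{l=1}^{k-1}u_l^j+\sum_{n=1}^{s-k+1}w_n^j\quad (1\le j\le k),$$
one sees a Vinogradov system in $\grw=s+k-1$ variables on each side, in which the $2(k-1)$ variables $\bfx,\bfy$ lie in a fixed residue class modulo $p^a$, while the remaining $2s$ variables $\bfu,\bfv,\bfw,\bfz$ lie in a fixed residue class modulo $p^b$. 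Discarding the distinctness of $\bfx,\bfy$ modulo $p^{a+1}$ and of $\bfu,\bfv$ modulo $p^{b+1}$ only inflates the number of solutions, so by orthogonality one obtains
$$K_{a,b}^{k-1}(X;\xi,\eta)\le \oint |\grf_a(\bfalp;\xi)|^{2k-2}|\grf_b(\bfalp;\eta)|^{2s}\d\bfalp .$$
When $a=0$ the same reasoning applies via (\ref{2.21}) and (\ref{2.22}), discarding in addition the constraint excluding the class $\eta$ modulo $p$ and noting that $\grf_0(\bfalp;0)=f_k(\bfalp;X)$.

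Next I would apply Lemma \ref{lemma2.2} with $u=k-1$ and $v=s$, so that $u+v=\grw$, to deduce
$$K_{a,b}^{k-1}(X;\xi,\eta)\ll \left( J_\grw(X/M^a)\right)^{(k-1)/\grw}\left( J_\grw(X/M^b)\right)^{s/\grw}.$$
The hypothesis $a<b\le (32k\tet)^{-1}$ (in particular $b\le (2\tet)^{-1}$) ensures $X/M^a\ge X/M^b\ge X^{1/2}$, so the upper bound (\ref{2.11}) is available and yields, uniformly in the admissible $\xi$ and $\eta$,
$$K_{a,b}^{k-1}(X)\ll \left( X/M^a\right)^{(\lam+\del)(k-1)/\grw}\left( X/M^b\right)^{(\lam+\del)s/\grw}.$$

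Finally I would insert this estimate into the definition (\ref{2.25}) of $\llbracket K_{a,b}^{k-1}(X)\rrbracketsub{{\Del,1}}$, using the identity $2\grw-2k+2=2s$ together with the relation $\Lam=\lam-2\grw+\tfrac{1}{2}k(k+1)-\Del$ from (\ref{2.27}). Collecting powers of $X$ and of $M$ — the two $\del$-contributions combine to at most $X^\del$ since $(k-1)/\grw+s/\grw=1$ — the asserted estimate reduces to the inequality
$$-a\Lam+(b-a)s\cdot\frac{\grw-\lam}{\grw}\le 0,$$
which holds because $\Lam\ge 0$ by hypothesis, $0\le a<b$, and $\lam=\lam_\grw^*\ge\grw$ by the lower bound in (\ref{2.1}). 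For a bound this crude there is no genuine obstacle; the only steps calling for any care are the verification that discarding the conditioning in the first step produces precisely the mean value $\oint |\grf_a(\bfalp;\xi)|^{2k-2}|\grf_b(\bfalp;\eta)|^{2s}\d\bfalp$ (with the parallel check when $a=0$), and the routine rearrangement of exponents in the last step.
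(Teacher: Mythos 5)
Your proposal is correct and follows essentially the same route as the paper, which simply invokes the argument of \cite[Lemma 5.3]{FW2013}: discard the conditioning to bound $K_{a,b}^{k-1}(X;\xi,\eta)$ by $\oint |\grf_a(\bfalp;\xi)^{2k-2}\grf_b(\bfalp;\eta)^{2s}|\d\bfalp$, apply Lemma \ref{lemma2.2} and the bound (\ref{2.11}), and verify the exponent inequality using $\Lam\ge 0$ and $\lam\ge \grw$, together with the normalisation identity $\llbracket K_{a,b}^{k-1}(X)\rrbracketsub{{\Del,1}}=M^{ka-b}\llbracket K_{a,b}^{k-1}(X)\rrbracketsub{{\Del,0}}$. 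The only blemish is your misquotation of the hypothesis as $b\le (32k\tet)^{-1}$ rather than the stated $b\le(2\tet)^{-1}$, which is harmless since the latter is exactly what you use.
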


\begin{proof} The desired conclusion follows by the argument applied in the proof of 
\cite[Lemma 5.3]{FW2013}, on noting that $\llbracket K_{a,b}^{k-1}(X)\rrbracketsub{{\Del,1}}=
M^{ka-b}\llbracket K_{a,b}^{k-1}(X)\rrbracketsub{{\Del,0}}$.
\end{proof}

We can now announce a mean value estimate for $J_\grw(X)$ that in many circumstances is a little sharper 
than that recorded in Theorem \ref{theorem1.1}. Since this estimate is likely to be of use in future 
applications, as indeed is the case in \S\S10--12 of this paper, we deliberately opt for a relatively 
transparent form.

\begin{theorem}\label{theorem9.2} Suppose that $k$, $r$ and $s$ are natural numbers with $k\ge 3$,
$$r\le \min \{ k-2,\tfrac{1}{2}(k+1)\}\quad \text{and}\quad s\ge s_0,$$
where
$$s_0=k^2-(r+1)k+\tfrac{1}{2}r(r+3)-\nu_0(r,s)$$
and
$$\nu_0(r,s)=\sum_{m=1}^r\frac{m(k-m-1)}{s-m}.$$
Put
$$\nu=\max\{ k^2-(r+1)k+\tfrac{1}{2}r(r+3)-s,0\}.$$
Then for each $\eps>0$, one has
$$J_{s+k-1}(X)\ll X^{2s+2k-2-\frac{1}{2}k(k+1)+\Del+\eps},$$
where
$$\Del=\sum_{m=1}^r\frac{(m-1)(k-m-1)}{s-m}-\frac{(\nu_0(r,s)-\nu)(r-1)}{s}.$$
\end{theorem}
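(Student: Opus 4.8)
The plan is to derive the stated bound from the equivalent assertion that $\Lam(\Del)\le 0$, where $\Del$ is the exponent in the statement --- which is precisely $\Del_0(\nu)$ of (2.7) for the given $\nu$ --- and $\Lam$ is as in (2.27); the presence of $\eps$ in the conclusion is then automatic from the definition of $\lam=\lam_\grw^*$. First I would record the book-keeping that makes the machinery of \S\S6--8 applicable: the choice $\nu=\max\{k^2-(r+1)k+\tfrac12 r(r+3)-s,\,0\}$ satisfies $0\le\nu\le\nu_0(r,s)$, the upper inequality being exactly the hypothesis $s\ge s_0$; hence $s\ge s_0^*=s_0(\nu)$, so the amplification exponents $\rho_j=k_js/s_0^*$ of (8.1) obey $\rho_j\ge k_j=k-r+j\ge k-r\ge 2$, where $k-r\ge 2$ uses $r\le k-2$. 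When $r\ge 2$ one invokes Lemma \ref{lemma7.4} (that is, $\iota=0$) inside Lemma \ref{lemma8.2}; when $r=1$ one has $\nu=0$ and uses Lemma \ref{lemma7.3}, the two coinciding in that case. If $\Lam(\Del)<0$ the required estimate holds a fortiori, so it suffices to treat $\Lam=\Lam(\Del)\ge 0$ and show that then $\Lam\le 0$.

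Next I would initialise the iteration. By Lemma \ref{lemma5.1} there is a prime $p$ with $M<p\le 2M$ and an $h^*\in\{0,1,2,3\}$ with $J_\grw(X)\ll M^{2s+2sh^*/3}K_{0,1+h^*}^{k-1}(X)$; fixing this $p$ once and for all and unwinding the definitions (2.23)--(2.25) together with the lower bound in (2.26), one obtains a relation of the form (8.2), namely
$$X^\Lam M^{\Lam\psi_0}\ll X^{c_0\del}M^{-\gam_0}\llbracket K_{0,b_0}^{k-1}(X)\rrbracketsub{{\Del,1}},$$
with $b_0=1+h^*$, $\psi_0=0$, $c_0=1$ and $\gam_0=\tfrac43 sh^*-1-h^*$. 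One checks at once that the hypotheses of Lemma \ref{lemma8.2} hold at level $(0,b_0)$: $\psi_0\ge 0$; $\gam_0\ge -b_0$ since $\tfrac43 sh^*\ge 0$; $c_0=1\le(2\del)^{-1}\tet$ and $b_0\le(32k\tet)^{-1}$ by the extreme smallness of $\del$ and $\tet$ in (2.9); and $b_0\ge(r-1)\cdot 0$ trivially.

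I would then iterate Lemma \ref{lemma8.2}, producing sequences $(a_n,b_n,\psi_n,c_n,\gam_n)$ and indices $(j_n,h_n)$ with $a_{n+1}=b_n$, $b_{n+1}=k_{j_n}b_n+h_n$ (where $0\le h_n\le 15(k_{j_n}-1)b_n$), $\psi_{n+1}=\rho_{j_n}(\psi_n+(1-(k-1)/s)b_n)$, $c_{n+1}=\rho_{j_n}(c_n+1)$ and $\gam_{n+1}=\rho_{j_n}\gam_n+(\tfrac43 s-1)h_n$. The heart of the matter is an induction confirming that the hypotheses of Lemma \ref{lemma8.2} survive each step: the condition $b_{n+1}\ge(r-1)a_{n+1}$ holds because $b_{n+1}\ge k_{j_n}b_n\ge(k-r)b_n\ge(r-1)b_n$, which is exactly where $r\le\tfrac12(k+1)$ is essential; $\psi_n\ge 0$ is immediate; the bounds $b_n\le(32k\tet)^{-1}$ and $c_n\le(2\del)^{-1}\tet$ persist provided the number of iterations $N_0$ stays below a threshold of order $N$, which is legitimate since $b_n$ and $c_n$ grow at most geometrically (ratios at most $16k$ and at most $ks/s_0^*$) while $\tet^{-1}$ and $\del^{-1}\tet$ have size $(16(s+k))^{N+1}$; and $\gam_n\ge -b_n$ is sustained using $\rho_{j_n}\ge k_{j_n}$ together with the non-negativity of the increment $(\tfrac43 s-1)h_n$. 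This is the point at which the varying modulus amplification factor $k_{j_n}$, absent from the monograde arguments of \cite{FW2013,Woo2012a,Woo2013}, demands the greatest care, and it is exactly the complication the introduction flags as new.

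Finally, after $N_0$ iterations one combines the resulting instance of (8.2) with the crude cap of Lemma \ref{lemma9.1},
$$\llbracket K_{a_{N_0},b_{N_0}}^{k-1}(X)\rrbracketsub{{\Del,1}}\ll X^{\Lam+\del}(M^{b_{N_0}-a_{N_0}})^sM^{ka_{N_0}-b_{N_0}}.$$
Cancelling $X^\Lam$ and taking logarithms to base $M$ (using $(c_{N_0}+1)\del\le\tet$) yields $\Lam\psi_{N_0}\le 1-\gam_{N_0}+s(b_{N_0}-a_{N_0})+ka_{N_0}-b_{N_0}+o(1)$ as $X\to\infty$. One then analyses the recursions for $\psi_n,\gam_n,a_n,b_n$ --- all governed by the partial products of the $\rho_{j_i}$ and $k_{j_i}$ --- to see that, after dividing through by the appropriate partial product, the right side stays of bounded order while $\psi_{N_0}$ (normalised the same way) outgrows it by a factor at least of order $N_0$, on account of the accumulated contributions of the $(1-(k-1)/s)b_n$ terms. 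Hence $\Lam\ll N_0^{-1}+o(1)$, and since $N_0$ may be taken arbitrarily large by choosing $N$ large, while $\Lam$ is independent of $N$, we conclude $\Lam\le 0$. What makes this extraction close is the calibration of $s_0^*$ and $\Del=\Del_0(\nu)$ through the choice of $\nu$: it is precisely the value for which the identities $\phi^*+\sum_{j=0}^r\phi_j=1$ and $s\Ome=s_0^*$, established in the proof of Lemma \ref{lemma8.2}, render the anticipated magnitude conserved through each iteration, with no adverse slack accumulating. I expect this final accounting, together with the inductive verification that the parameters remain in range, to be the main technical labour.
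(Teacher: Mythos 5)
Your proposal is correct and follows essentially the same route as the paper: reduce to showing $\Lam(\Del_0(\nu))\le 0$, initialise via Lemma \ref{lemma5.1}, iterate Lemma \ref{lemma8.2} while inductively verifying the parameter constraints (with $b_{n+1}\ge(r-1)a_{n+1}$ secured by $k-r\ge r-1$), cap with Lemma \ref{lemma9.1}, and conclude $\Lam\ll N^{-1}$ from the linear growth of $\psi_N$ relative to the partial products $\kap_0\cdots\kap_{N-1}$. The only cosmetic difference is that the paper first reduces to $s=s_0(\nu)$ by a trivial estimate, which makes $\rho_j=k_j$ and slightly streamlines the final accounting you describe.
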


\begin{proof} Write $\Lam=\Lam(\Del)$, and note that $\Del=\Del_0(\nu)$. We prove that $\Lam\le 0$, 
for the conclusion of the lemma then follows at once from (\ref{2.26}). Recall the definition (\ref{2.6}). 
Then we may suppose also that $s=s_0(\nu)$, for if $t>\grw$, then a trivial estimate delivers the estimate 
$J_t(X)\ll X^{2(t-\grw)}J_\grw(X)$, and thus the desired conclusion follows from the upper bound 
provided by the theorem in the case $s=s_0(\nu)$. Assume then that $\Lam\ge 0$, for otherwise there is 
nothing to prove. We begin by noting that as a consequence of Lemma \ref{lemma5.1}, one finds from 
(\ref{2.23}) and (\ref{2.25}) that there exists an integer $h_{-1}\in\{0,1,2,3\}$ such that
\begin{align*}
\llbracket J_\grw(X)\rrbracketsub{{\Del}}&\ll (M^{h_{-1}})^{-4s/3}\llbracket 
K_{0,1+h_{-1}}^{k-1}(X)\rrbracketsub{{\Del,0}}\\
&\ll M^{1-(4s/3-1)h_{-1}}\llbracket K_{0,1+h_{-1}}^{k-1}(X)\rrbracketsub{{\Del,1}}.
\end{align*}
We therefore deduce from (\ref{2.26}) that
\begin{equation}\label{9.1}
X^\Lam \ll X^\del \llbracket J_\grw(X)\rrbracketsub{{\Del}}\ll X^\del M(M^{h_{-1}})^{-(4s/3-1)}
\llbracket K_{0,1+h_{-1}}^{k-1}(X)
\rrbracketsub{{\Del,1}} .
\end{equation}

\par Next we define sequences $(\kap_n)$, $(h_n)$, $(a_n)$, $(b_n)$, $(c_n)$, $(\psi_n)$ and 
$(\gam_n)$, for $0\le n\le N$, in such a way that
\begin{equation}\label{9.2}
k-r\le \kap_{n-1}\le k,\quad 0\le h_{n-1}\le 15(\kap_{n-1}-1)b_{n-1}\quad (n\ge 1),
\end{equation}
and
\begin{equation}\label{9.3}
X^\Lam M^{\Lam \psi_n}\ll X^{c_n\del}M^{-\gam_n}\llbracket 
K_{a_n,b_n}^{k-1}(X)\rrbracketsub{{\Del,1}}.
\end{equation}
Given a fixed choice for the sequences $(\kap_n)$ and $(h_n)$, the remaining sequences are defined by 
means of the relations
\begin{equation}\label{9.4}
a_{n+1}=b_n,\quad b_{n+1}=\kap_nb_n+h_n,
\end{equation}
\begin{equation}\label{9.5}
c_{n+1}=\kap_n(c_n+1),
\end{equation}
\begin{equation}\label{9.6}
\psi_{n+1}=\kap_n\psi_n+\kap_n(1-(k-1)/s)b_n,
\end{equation}
\begin{equation}\label{9.7}
\gam_{n+1}=\kap_n\gam_n+(4s/3-1)h_n.
\end{equation}
We put
$$\kap_{-1}=k,\quad a_0=0,\quad b_{-1}=1,\quad b_0=1+h_{-1},$$
$$\psi_0=0,\quad c_0=1,\quad \gam_0=(4s/3-1)h_{-1}-1,$$
so that both (\ref{9.2}) and (\ref{9.3}) hold with $n=0$ as a consequence of our initial choice of 
$h_{-1}$ together with (\ref{9.1}). We prove by induction that for each non-negative integer $n$ with 
$n<N$, the sequences $(\kap_m)_{m=0}^n$ and $(h_m)_{m=-1}^n$ may be chosen in such a way that
\begin{equation}\label{9.8}
0\le a_n<b_n\le (32\kap_n\tet)^{-1},\quad b_n\ge (r-1)a_n,
\end{equation}
\begin{equation}\label{9.9}
\psi_n\ge 0,\quad \gam_n\ge -b_n,\quad 0\le c_n\le (2\del)^{-1}\tet,
\end{equation}
and so that (\ref{9.2}) and (\ref{9.3}) both hold with $n$ replaced by $n+1$.\par

Let $0\le n<N$, and suppose also that (\ref{9.2}) and (\ref{9.3}) both hold for the index $n$. We have 
already shown such to be the case for $n=0$. We observe first that the relation (\ref{9.4}) demonstrates 
that $b_n>a_n$ for all $n$. Moreover, since our hypotheses on $r$ ensure that $\kap_n\ge k-r\ge r-1$, it 
follows from (\ref{9.4}) that one has $b_n\ge (r-1)a_n$. Also, from (\ref{9.2}) and (\ref{9.4}), we find 
that
$$b_n\le 4\cdot 16^n\kap_0\ldots \kap_{n-1}\le 4(16k)^n,$$
whence, by invoking (\ref{2.9}), we see that for $0\le n<N$ one has
$$b_n\le (32k\tet)^{-1}\le (32\kap_n\tet)^{-1}.$$
It is apparent from (\ref{9.5}) and (\ref{9.6}) that $c_n$ and $\psi_n$ are non-negative for all $n$. 
Observe also that since $\kap_m\le k$, then by iterating (\ref{9.5}) we obtain the bound
\begin{equation}\label{9.10}
c_n\le k^n+k\Bigl( \frac{k^n-1}{k-1}\Bigr)\le 3k^n\quad (n\ge 0),
\end{equation}
and by reference to (\ref{2.9}) we see that $c_n\le (2\del)^{-1}\tet$ for $0\le n<N$.\par

In order to bound $\gam_n$, we begin by noting from (\ref{9.4}) that for $m\ge 1$, one has
$$h_m=b_{m+1}-\kap_mb_m\quad \text{and}\quad a_m=b_{m-1}.$$
Then it follows from (\ref{9.7}) that for $m\ge 1$ one has
$$\gam_{m+1}-(4s/3-1)b_{m+1}=\kap_m\left(\gam_m-(4s/3-1)b_m\right).$$
By iterating this relation, we deduce that for $m\ge 1$, one has
$$\gam_m=(4s/3-1)b_m+\kap_0\ldots \kap_{m-1}(\gam_0-(4s/3-1)b_0).$$
Recall next that $b_0=1+h_{-1}$ and $\gam_0=(4s/3-1)h_{-1}-1$. Then we discern that
\begin{equation}\label{9.11}
\gam_m=(4s/3-1)b_m-(4s/3)\kap_0\ldots \kap_{m-1}\quad (m\ge 1).
\end{equation}
Finally, we find from (\ref{9.4}) that for $m\ge 0$ one has $b_{m+1}\ge \kap_mb_m$, so that an 
inductive argument yields the lower bound $b_m\ge \kap_0\ldots \kap_{m-1}$ for $m\ge 0$. Hence we 
deduce that
$$\gam_m\ge \tfrac{4}{3}s(b_m-\kap_0\ldots \kap_{m-1})-b_m\ge -b_m.$$
Assembling this conclusion together with those of the previous paragraph, we have shown that both 
(\ref{9.8}) and (\ref{9.9}) hold for $0\le n\le N$.\par

At this point in the argument, we may suppose that (\ref{9.3}), (\ref{9.8}) and (\ref{9.9}) hold for the 
index $n$. An application of Lemma \ref{lemma8.2} therefore reveals that there exist integers $\kap_n$ 
and $h_n$ satisfying the constraints implied by (\ref{9.2}) with $n$ replaced by $n+1$, for which the 
upper bound (\ref{9.3}) holds also with $n$ replaced by $n+1$. This completes the inductive step, so that 
in particular the upper bound (\ref{9.3}) holds for $0\le n\le N$.\par

We now exploit the bound just established. Since we have the upper bound 
$b_N\le 4(16k)^N\le (2\tet)^{-1}$, it is a consequence of Lemma \ref{lemma9.1} that
\begin{equation}\label{9.12}
\llbracket K_{a_N,b_N}^{k-1}(X)\rrbracketsub{{\Del,1}}\ll X^{\Lam+\del}
(M^{b_N-b_{N-1}})^sM^{kb_{N-1}-b_N}.
\end{equation}
By combining (\ref{9.3}) with (\ref{9.11}) and (\ref{9.12}), we obtain the bound
\begin{align}
X^\Lam M^{\Lam\psi_N}&\ll X^{\Lam+(c_N+1)\del}
M^{kb_{N-1}-b_N+s(b_N-b_{N-1})-\gam_N}\notag \\
&\ll X^{\Lam+(c_N+1)\del}M^{-\frac{1}{3}sb_N-(s-k)b_{N-1}+\frac{4}{3}s\kap_0\ldots \kap_{N-1}}.
\label{9.13}
\end{align}
Meanwhile, an application of (\ref{9.10}) in combination with (\ref{2.9}) shows that 
$X^{(c_N+1)\del }<M$. We therefore deduce from (\ref{9.13}) and our previous lower bound 
$b_N\ge \kap_0\ldots \kap_{N-1}$ that
$$\Lam \psi_N\le \tfrac{4}{3}s\kap_0\ldots \kap_{N-1}-\tfrac{1}{3}b_Ns\le s\kap_0\ldots \kap_{N-1}.$$
Temporarily, we write $\chi=(k-1)/s$. Then a further application of the lower bound 
$b_n\ge \kap_0\ldots \kap_{n-1}$ leads from (\ref{9.6}) to the relation
$$\psi_{n+1}=\kap_n\psi_n+\kap_n(1-\chi)b_n\ge \kap_n\psi_n+
\kap_n(1-\chi)\kap_0\ldots \kap_{n-1},$$
whence, by an inductive argument, one finds that
$$\psi_N\ge N(1-\chi)\kap_0\ldots \kap_{N-1}.$$
Thus we deduce that
$$\Lam\le \frac{s\kap_0\ldots \kap_{N-1}}{N(1-1/\chi)\kap_0\ldots \kap_{N-1}}=
\frac{s(1-1/\chi)^{-1}}{N}.$$
Since we are at liberty to take $N$ as large as we please in terms of $s$ and $k$, we are forced to
 conclude that $\Lam\le 0$. In view of our opening discussion, this completes the proof of the theorem.
\end{proof}

The proof of Theorem \ref{theorem1.1} follows by precisely the same argument as that employed to 
establish Theorem \ref{theorem9.2}. We have merely to adjust the choice of parameters so that 
$s_0(\nu)$ and $\Del$ are replaced by
$$s_1(0)=k^2-(r+1)k+\tfrac{1}{2}r(r+3)\quad \text{and}\quad 
\Del=\frac{1}{s}\sum_{m=1}^r(m-1)(k-m-1).$$
The argument of the proof then applies just as before, and when $s\ge s_1(0)$ one obtains the bound
$$J_{s+k-1}(X)\ll X^{2s+2k-2-\frac{1}{2}k(k+1)+\Del+\eps},$$
where, following a modest computation, one finds that
$$\Del=\frac{r(r-1)(3k-2r-5)}{6s}.$$
The conclusion of Theorem \ref{theorem1.1} follows on replacing $s$ in the bound just described by 
$s-k+1$ in the statement of the theorem.

\section{The asymptotic formula in Waring's problem} Our first applications of the improved mean value 
estimates supplied by Theorems \ref{theorem1.1} and \ref{theorem9.2} concern the asymptotic formula 
in Waring's problem. In this section we establish Theorems \ref{theorem1.3} and \ref{theorem1.4}, as 
well as a number of auxiliary estimates of use in related topics. In this context, we define the exponential 
sum $g(\alp)=g_k(\alp;X)$ by
$$g_k(\alp;X)=\sum_{1\le x\le X}e(\alp x^k).$$
Also, we define the set of minor arcs $\grm=\grm_k$ to be the set of real numbers $\alp\in [0,1)$ 
satisfying the property that, whenever $a\in \dbZ$ and $q\in \dbN$ satisfy $(a,q)=1$ and 
$|q\alp-a|\le (2k)^{-1}X^{1-k}$, then $q>(2k)^{-1}X$. We begin by applying the methods of 
\cite{Woo2012b} to derive a mean value estimate restricted to minor arcs.\par

The introduction of some additional notation eases our exposition. We define exponents $\nu_\iota^*(r,s)$ 
and $\Del_\iota^*(r,s)$ consistent with the definitions (\ref{2.5}), (\ref{2.7}) and (\ref{2.8}), save that we 
shift the parameter $s$ by $k-1$. Thus, we put
$$\nu_0^*(r,s)=\sum_{m=1}^r\frac{m(k-m-1)}{s-k-m+1}\quad \text{and}\quad \nu^*_1(r,s)=0.$$
We then take $\nu$ to be an integer with $0\le \nu\le \nu_\iota^*(r,s)$, and put
\begin{align}
\Del_0^*(r,s;\nu)&=\sum_{m=1}^r\frac{(m-1)(k-m-1)}{s-k-m+1}-
\frac{(\nu_0^*(r,s)-\nu)(r-1)}{s-k+1},
\label{10.1}\\ 
\Del_1^*(r,s;\nu)&=\sum_{m=1}^r \frac{(m-1)(k-m-1)}{s-k+1}.\label{10.2}
\end{align}

Our first result of this section provides a mean value estimate restricted to minor arcs of use in many 
applications.

\begin{theorem}\label{theorem10.1} Let $\iota$ be either $0$ or $1$. Suppose that $r$, $s$ and $k$ are 
integers with $k\ge 3$,
\begin{equation}\label{10.3}
1\le r\le \min \{ k-2,\tfrac{1}{2}(k+1)\},
\end{equation}
and
\begin{equation}\label{10.4}
s\ge k^2-rk+\tfrac{1}{2}r(r+3)-1-\nu^*_\iota(r,s).
\end{equation}
Put
$$\nu=\max\{ k^2-rk+\tfrac{1}{2}r(r+3)-1-s,0\}.$$
Then for each $\eps>0$, one has
$$\int_\grm |g_k(\alp;X)|^{2s}\d\alp \ll X^{2s-k-1+\Del+\eps},$$
where $\Del=\Del^*_\iota(r,s;\nu)$.
\end{theorem}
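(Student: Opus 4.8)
The plan is to combine the mean value estimate for $J_{s+k-1}(X)$ supplied by Theorems \ref{theorem1.1} and \ref{theorem9.2} with the minor-arc machinery from \cite{Woo2012b}. The key point is that the hypotheses \eqref{10.3} and \eqref{10.4}, together with the definitions \eqref{10.1} and \eqref{10.2}, are precisely those of Theorems \ref{theorem1.1} and \ref{theorem9.2} with $s$ shifted by $k-1$; indeed the exponents $\nu_\iota^*(r,s)$ and $\Del_\iota^*(r,s;\nu)$ were defined for exactly this purpose. So the first step is to translate: writing $t=s-k+1$, the conditions on $r$, $t$, $k$ reduce to those under which Theorem \ref{theorem9.2} (for $\iota=0$) or Theorem \ref{theorem1.1} (for $\iota=1$) applies, and those theorems yield
$$J_{s}(X)=J_{t+k-1}(X)\ll X^{2t+2k-2-\frac12 k(k+1)+\Del+\eps}=X^{2s-\frac12 k(k+1)+\Del+\eps},$$
with $\Del=\Del_\iota^*(r,s;\nu)$ and $\nu$ as in the statement.

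The second step is to invoke the transference from a full mean value bound to a minor-arc mean value bound developed in \cite{Woo2012b}. The essential inequality there (its Weyl-type differencing plus Hölder argument) shows that for a suitable exponent one has, roughly,
$$\int_\grm |g_k(\alp;X)|^{2s}\d\alp \ll X^{\eps}\Bigl(\sup_{\alp\in\grm}|g_k(\alp;X)|\Bigr)^{\sigma} J_{s'}(2X)^{\tau}$$
for appropriate $\sigma,\tau,s'$, and then Weyl's inequality (or its Vinogradov-type refinement) controls the supremum of $|g_k|$ over $\grm$. Concretely, I would follow the argument of \cite{Woo2012b} verbatim: express $\int_\grm |g|^{2s}\,\d\alp$ in terms of $J_{s,k}$-type quantities after using the minor-arc estimate $\sup_{\grm}|g_k(\alp;X)|\ll X^{1-\tau_k+\eps}$ with $\tau_k\gg 1/k^2$ (say from \cite{Woo2012a} or the Weyl estimate), balance the exponents, and substitute the mean value bound from Step 1. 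The arithmetic of \cite{Woo2012b} is arranged precisely so that when the input mean value bound is $J_{s}(X)\ll X^{2s-\frac12 k(k+1)+\Del+\eps}$, the output minor-arc bound comes out as $X^{2s-k-1+\Del+\eps}$ — the shift from $\frac12 k(k+1)$ to $k+1$ in the exponent being exactly what the minor-arc saving buys. One must check that the value of $s$ here is large enough for the pruning/balancing in \cite{Woo2012b} to be valid; the hypothesis \eqref{10.4} combined with $r\le \tfrac12(k+1)$ guarantees $s\gg k^2$, which is more than sufficient.

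The main obstacle is verifying that the minor-arc transference of \cite{Woo2012b} applies cleanly with the present, slightly more general, family of mean value exponents $\Del_\iota^*(r,s;\nu)$ rather than the specific exponents used in that paper, and in particular tracking that the loss incurred in the transference is genuinely additive in the exponent (i.e. contributes nothing beyond replacing $\frac12 k(k+1)$ by $k+1$) rather than degrading $\Del$. I expect this to be routine but bookkeeping-heavy: one sets up the Hölder split as in \cite{Woo2012b}, applies the mean value estimate of Step 1 to the relevant factor, the trivial bound $|g_k|\le X$ and the minor-arc Weyl bound to the others, and optimizes. A secondary point to confirm is that the case $\iota=1$ (using Theorem \ref{theorem1.1}) and $\iota=0$ (using Theorem \ref{theorem9.2}) can be handled in a unified way — they can, since the only difference is the numerical value of $\Del$ fed into the same transference. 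I would therefore present the proof as: reduce to Step 1 via the substitution $s\mapsto s-k+1$, then quote the minor-arc lemma of \cite{Woo2012b} with this mean value as input, and conclude.
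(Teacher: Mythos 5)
Your proposal matches the paper's proof: the paper simply quotes \cite[Theorem 2.1]{Woo2012b}, which gives directly $\int_\grm|g_k(\alp;X)|^{2s}\d\alp \ll X^{\frac{1}{2}k(k-1)-1}(\log X)^{2s+1}J_{s,k}(2X)$, and substitutes the bound $J_{s,k}(2X)\ll X^{2s-\frac{1}{2}k(k+1)+\Del+\eps}$ obtained from Theorems \ref{theorem1.1} and \ref{theorem9.2} via exactly your shift $s\mapsto s-k+1$. Your description of the internal mechanism of the transference is not quite right (it is a direct inequality against $J_{s,k}(2X)$, not a H\"older split against a Weyl-type supremum over $\grm$), but since you invoke the result as a black box and the exponent arithmetic $\tfrac{1}{2}k(k-1)-1+2s-\tfrac{1}{2}k(k+1)=2s-k-1$ is correct, the argument is essentially identical.
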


\begin{proof} According to \cite[Theorem 2.1]{Woo2012b}, one has
$$\int_\grm|g_k(\alp;X)|^{2s}\d\alp \ll X^{\frac{1}{2}k(k-1)-1}(\log X)^{2s+1}J_{s,k}(2X).$$
By combining Theorems \ref{theorem1.1} and \ref{theorem9.2}, it follows that when $s$ satisfies the 
lower bound (\ref{10.4}), one has 
$$J_{s,k}(2X)\ll X^{2s-\frac{1}{2}k(k+1)+\Del+\eps},$$
and the conclusion of the theorem now follows.
\end{proof}

The dependence on $s$ of $\nu_0^*(r,s)$ suggests that the lower bound (\ref{10.4}) may be difficult to 
interpret. However, since $s$ exceeds $\tfrac{1}{2}k(k+1)$ and $r\le \tfrac{1}{2}(k+1)$, a crude 
computation confirms that $\nu_0^*(r,s)\le k$. Put
$$s_0=k^2-rk+\tfrac{1}{2}r(r+3)-1.$$
In practice one may check successively for the largest integral value of $\nu$ with $0\le \nu\le k$ for which 
$\nu_0^*(r,s_0-\nu)\ge \nu$. This isolates the largest integer $\nu$ for which (\ref{10.4}) holds with 
$s=s_0-\nu$. As we have noted, this maximal value of $\nu$ is no larger than $k$, and so this is not 
particularly expensive computationally.\par

The special case of Theorem \ref{theorem10.1} with $r=1$ merits particular attention.

\begin{corollary}\label{corollary10.2}
Suppose that $s\ge k^2-k+1$. Then for each $\eps>0$, one has
$$\int_\grm|g_k(\alp;X)|^{2s}\d\alp \ll X^{2s-k-1+\eps}.$$
\end{corollary}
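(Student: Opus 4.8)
The plan is to obtain Corollary~\ref{corollary10.2} as the special case $r=1$, $\iota=1$ of Theorem~\ref{theorem10.1}; essentially no new idea is needed, only a short verification of the relevant exponents. First I would check that, with this choice of parameters, the hypotheses of Theorem~\ref{theorem10.1} are satisfied. Since $k\ge 3$ throughout, one has $1\le 1\le \min\{k-2,\tfrac12(k+1)\}$, so the constraint (\ref{10.3}) holds with $r=1$. Moreover $\nu_1^*(r,s)=0$ by definition, so the lower bound (\ref{10.4}) for $\iota=1$ and $r=1$ reads $s\ge k^2-k+1$ (the quantity $k^2-rk+\tfrac12 r(r+3)-1$ equalling $k^2-k+1$ when $r=1$), which is precisely the hypothesis of the corollary.

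Next I would evaluate the two exponents entering the conclusion of Theorem~\ref{theorem10.1}. Because $s\ge k^2-k+1$, the integer $\nu=\max\{k^2-k+1-s,\,0\}$ equals $0$. Consequently, from (\ref{10.2}),
$$\Del=\Del_1^*(1,s;0)=\sum_{m=1}^1\frac{(m-1)(k-m-1)}{s-k+1}=0,$$
the single summand vanishing since its factor $m-1$ is zero at $m=1$; indeed $\Del_1^*$ does not depend on $\nu$ at all. (One could equally invoke the case $\iota=0$, where $\nu_0^*(1,s)=(k-2)/(s-k)$ relaxes the lower bound on $s$ by a bounded amount; but throughout the range $s\ge k^2-k+1$ the choice $\iota=1$ already forces $\nu=0$ and $\Del=0$, so nothing is gained.) Substituting $\Del=0$ into the estimate provided by Theorem~\ref{theorem10.1} gives
$$\int_\grm |g_k(\alp;X)|^{2s}\,\d\alp\ll X^{2s-k-1+\eps},$$
which is the assertion of the corollary.

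There is no real obstacle here: all the substance lies in Theorem~\ref{theorem10.1}, and hence ultimately in the mean value bounds of Theorems~\ref{theorem1.1} and~\ref{theorem9.2} combined with \cite[Theorem~2.1]{Woo2012b}; the only point requiring attention is the parameter bookkeeping, in particular that the $r=1$ value of $\Del_1^*$ is identically zero. As a sanity check, one may also run the argument directly: Corollary~\ref{corollary1.2} gives $J_{s,k}(2X)\ll X^{2s-\frac12 k(k+1)+\eps}$ for $s\ge k^2-k+1$, and feeding this into the inequality $\int_\grm|g_k(\alp;X)|^{2s}\,\d\alp\ll X^{\frac12 k(k-1)-1}(\log X)^{2s+1}J_{s,k}(2X)$ used in the proof of Theorem~\ref{theorem10.1} yields the exponent $\tfrac12 k(k-1)-1+2s-\tfrac12 k(k+1)=2s-k-1$, in agreement with the above.
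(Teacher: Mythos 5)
Your proposal is correct and matches the paper exactly: the paper presents Corollary \ref{corollary10.2} as the special case $r=1$ of Theorem \ref{theorem10.1}, and your verification that the hypotheses hold, that $\nu=0$, and that $\Del_1^*(1,s;0)=0$ (the lone summand carrying the factor $m-1=0$) is precisely the required bookkeeping. The direct sanity check via Corollary \ref{corollary1.2} and the exponent computation $\tfrac{1}{2}k(k-1)-1+2s-\tfrac{1}{2}k(k+1)=2s-k-1$ is also accurate.
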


The mean value over major arcs $\grM=[0,1)\setminus \grm$ corresponding to that bounded in this 
corollary has order of magnitude $X^{2s-k}$. Thus, as is clear already in \cite{Woo2012b}, estimates of 
the type provided by Corollary \ref{corollary10.2} may be employed in applications as powerful substitutes 
for estimates of Weyl type.

\par We apply these bounds so as to handle the minor arc contribution in Waring's problem, beginning 
with a sketch of the arguments required for smaller values of $k$. For each natural number $k$, one 
begins by computing permissible exponents $\Del_{s,k}$ having the property that
\begin{equation}\label{10.5}
J_{s,k}(X)\ll X^{2s-\frac{1}{2}k(k+1)+\Del_{s,k}+\eps}.
\end{equation}
It is apparent from Corollary \ref{corollary1.2} that one may take $\Del_{s,k}=0$ for $s\ge k^2-k+1$, 
and thus we may concentrate on the interval $1\le s\le k(k-1)$. Consider each integer $s$ in this interval in 
turn. For each integer $r$ satisfying (\ref{10.3}), one may check (for $\iota \in \{0,1\}$) whether the 
lower bound (\ref{10.4}) is satisfied or not. If this lower bound is satisfied, then the exponent 
$\Del_\iota^*(r,s;\nu)$ given by (\ref{10.1}) or (\ref{10.2}) is permissible. As a preliminary value, one 
takes $\Del^*_{s,k}$ to be the least of these permissible exponents $\Del_\iota^*(r,s;\nu)$ as one runs 
through the available choices for $r$ and $\iota$. Next, by applying H\"older's inequality to (\ref{2.3}), it 
is apparent that whenever $s_1$ and $s_2$ are integers with
$$1\le s_1\le s\le s_2\le k^2-k+1,$$
then the upper bound (\ref{10.5}) holds with
$$\Del_{s,k}=\frac{(s-s_1)\Del^*_{s_2,k}+(s_2-s)\Del^*_{s_1,k}}{s_2-s_1}.$$
For each integer $s$ with $1\le s\le k^2-k+1$, therefore, one may linearly interpolate in this manner 
amongst all possible choices of $s_1$ and $s_2$ so as to obtain the smallest available value of 
$\Del_{s,k}$. It is this exponent $\Del_{s,k}$ that we now fix, and use computationally in what follows.
 We note that if one is content to make use of potentially non-optimal conclusions, then one has the 
alternative option of applying Theorems \ref{theorem1.1} and \ref{theorem9.2} for the specific values of 
$s$ given by integral choices of $r$ with $1\le r\le \tfrac{1}{2}(k+1)$.\par

We are now equipped to negotiate the details of our analysis of the asymptotic formula in Waring's 
problem. We employ two strategies, the first of which interpolates between the minor arc estimate supplied 
by Theorem \ref{theorem10.1}, and that offered by Hua's lemma (see \cite[Lemma 2.5]{Vau1997}). Given 
natural numbers $k$ and $t$ with $k\ge 3$ and $1\le t\le k^2-k+1$, define the positive number 
$s_0(j)=s_0(k,t,j)$ by means of the relation
$$s_0(k,t,j)=2t-\frac{(1-\Del_{t,k})(2t-2^{j+1})}{k-j-\Del_{t,k}},$$
and then put
\begin{equation}\label{10.6}
s_1(k)=\min_{\substack{1\le t\le k^2-k+1\\ \Del_{t,k}<1}}
\min_{\substack{0\le j\le k-2\\ 2^j<t}}s_0(k,t,j).
\end{equation}

\begin{lemma}\label{lemma10.3} Suppose that $k$ is a natural number with $k\ge 3$. Then
$$\int_0^1|g_k(\alp;X)|^{s_1(k)}\d\alp \ll X^{s_1(k)-k+\eps}.$$
Moreover, when $s$ is a real number with $s>s_1(k)$, there exists a positive number $\del=\del(k,s)$ 
with the property that
$$\int_\grm|g_k(\alp;X)|^s\d\alp \ll X^{s-k-\del}.$$
\end{lemma}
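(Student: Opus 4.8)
The plan is to derive both assertions from a single device: H\"older interpolation between Hua's lemma and the minor arc mean value estimate of Theorem~\ref{theorem10.1}; the second assertion additionally uses the classical Weyl bound on $\grm$. Throughout I write $\grM=[0,1)\setminus \grm$ for the complementary major arcs.

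First I would bound the minor arc contribution at the critical exponent. Fix a pair $(t,j)$ admissible in (\ref{10.6}), so that $1\le t\le k^2-k+1$, $0\le j\le k-2$, $2^j<t$ and $\Del_{t,k}<1$, and set
$$\tet=\frac{1-\Del_{t,k}}{k-j-\Del_{t,k}},$$
noting that $0<\tet<1$ because $k-j\ge 2>1>\Del_{t,k}$. A direct computation from (\ref{10.6}) gives the identity $s_0(k,t,j)=\tet\,2^{j+1}+(1-\tet)2t$, whence $2^{j+1}<s_0(k,t,j)<2t$, and H\"older's inequality with conjugate exponents $\tet^{-1}$ and $(1-\tet)^{-1}$ yields
$$\int_\grm |g_k(\alp;X)|^{s_0(k,t,j)}\d\alp \le \Bigl( \int_0^1|g_k(\alp;X)|^{2^{j+1}}\d\alp \Bigr)^\tet \Bigl( \int_\grm |g_k(\alp;X)|^{2t}\d\alp\Bigr)^{1-\tet}.$$
Into the first factor I would insert Hua's lemma at the exponent $2^{j+1}$ (valid as $j+1\le k$; see \cite[Lemma 2.5]{Vau1997}) in the form $\int_0^1|g_k(\alp;X)|^{2^{j+1}}\d\alp \ll X^{2^{j+1}-j-1+\eps}$, and into the second the minor arc bound $\int_\grm |g_k(\alp;X)|^{2t}\d\alp \ll X^{2t-k-1+\Del_{t,k}+\eps}$, which follows from \cite[Theorem 2.1]{Woo2012b} together with the permissible exponent $J_{t,k}(X)\ll X^{2t-\tfrac{1}{2}k(k+1)+\Del_{t,k}+\eps}$ fixed in the discussion preceding this lemma (this is precisely the interpolated form of Theorem~\ref{theorem10.1}). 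By the very choice of $\tet$ one has
$$\tet(2^{j+1}-j-1)+(1-\tet)(2t-k-1+\Del_{t,k})=s_0(k,t,j)-k,$$
and hence $\int_\grm |g_k(\alp;X)|^{s_0(k,t,j)}\d\alp \ll X^{s_0(k,t,j)-k+\eps}$. Taking the infimum over admissible pairs $(t,j)$ delivers $\int_\grm |g_k(\alp;X)|^{s_1(k)}\d\alp \ll X^{s_1(k)-k+\eps}$. To complete the first assertion I would add the contribution of $\grM$: a routine verification shows that $s_1(k)>2k$ for every $k\ge 3$, so the classical Hardy--Littlewood analysis of the major arcs (convergence of the singular series and singular integral, with power-saving error terms) gives $\int_\grM |g_k(\alp;X)|^{s_1(k)}\d\alp \ll X^{s_1(k)-k}$, and combining the two pieces yields $\int_0^1|g_k(\alp;X)|^{s_1(k)}\d\alp \ll X^{s_1(k)-k+\eps}$.

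For the second assertion, let $s>s_1(k)$. On $\grm$ Weyl's inequality furnishes $|g_k(\alp;X)|\ll X^{1-\sig+\eps}$ with $\sig=2^{1-k}>0$, so that, using the minor arc estimate just established at exponent $s_1(k)$,
$$\int_\grm |g_k(\alp;X)|^s\d\alp \le \Bigl( \sup_{\alp\in\grm}|g_k(\alp;X)|\Bigr)^{s-s_1(k)}\int_\grm |g_k(\alp;X)|^{s_1(k)}\d\alp \ll X^{(1-\sig+\eps)(s-s_1(k))+s_1(k)-k+\eps}.$$
The exponent on the right equals $s-k-\sig(s-s_1(k))+\eps\bigl(s-s_1(k)+1\bigr)$, and since $s-s_1(k)>0$ we may take $\eps$ small enough in terms of $s$ and $k$ that it is at most $s-k-\del$ with $\del=\tfrac{1}{2}\sig(s-s_1(k))>0$; this is the asserted estimate.

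The conceptual core is the identity displayed above that pins down the weight $\tet$: once the shape of $s_0(k,t,j)$ in (\ref{10.6}) is recognised as exactly the exponent at which the weighted geometric mean of the Hua bound and the Theorem~\ref{theorem10.1} bound has size $X^{s-k}$, the remainder is bookkeeping. The point that genuinely demands care is the provenance of the permissible exponents $\Del_{t,k}$ across the whole range $1\le t\le k^2-k+1$ --- especially for the smaller values of $t$, where $\Del_{t,k}<1$ is obtained only via the linear interpolation described following Corollary~\ref{corollary10.2} rather than by a direct appeal to Theorems~\ref{theorem1.1} and~\ref{theorem9.2} --- together with the routine but essential verification that $s_1(k)>2k$, which is what licenses the classical major arc estimate.
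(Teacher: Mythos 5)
Your proof is correct, and the first assertion is handled exactly as in the paper: H\"older interpolation on $\grm$ between Hua's lemma at exponent $2^{j+1}$ and the estimate (\ref{10.7}) coming from Theorem \ref{theorem10.1}, with the weight chosen so that the interpolated exponent is precisely $s_0(k,t,j)-k$, followed by the classical major arc bound (the paper invokes the methods of \cite[Chapter 4]{Vau1997}, which in fact require only $s\ge k+2$, a weaker condition than your $s_1(k)>2k$; either suffices). For the second assertion, however, you take a genuinely different route. The paper re-runs the same H\"older interpolation at the exponent $s$ itself (splitting into the cases $s\ge 2t$, handled trivially, and $s_1(k)<s\le 2t$) and computes the explicit saving $\ome=(k-j-\Del_{t,k})(s-s_1(k))/(2t-2^{j+1})\ge \tau/(2k^2)$, yielding $\del=\tau/(4k^2)$. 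You instead peel off the excess $|g|^{s-s_1(k)}$ pointwise via Weyl's inequality on $\grm$ (which applies, since Dirichlet's theorem together with the definition of $\grm$ forces $(2k)^{-1}X<q\le 2kX^{k-1}$, whence $|g(\alp)|\ll X^{1-2^{1-k}+\eps}$), and then absorb the $\eps$-losses into half the Weyl saving. Both arguments are valid and equally short; the paper's has the advantage that the resulting $\del$ is only polynomially small in $k$ rather than of size $2^{1-k}(s-s_1(k))$, and it avoids any appeal to Weyl differencing, which is in keeping with the paper's theme that its mean value estimates supersede Weyl-type bounds --- but since the lemma asserts only the existence of some positive $\del$, your weaker saving is perfectly adequate.
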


\begin{proof} We first establish the second conclusion of the lemma. Let the parameters $j$ and $t$ 
correspond to the minimum implict in (\ref{10.6}). Then the second estimate claimed in the lemma is 
immediate from \cite[Theorem 2.1]{Woo2012b} when $s\ge 2t$, since we have $\Del_{t,k}<1$. Here, if 
necessary, we make use of the trivial estimate $|g_k(\alp;X)|\le X$. Indeed, the latter theorem shows that
\begin{equation}\label{10.7}
\int_\grm |g(\alp)|^{2t}\d\alp \ll X^{\frac{1}{2}k(k-1)-1+\eps}J_{t,k}(2X)\ll 
X^{2t-k+\Del_{t,k}-1+2\eps}.
\end{equation}
We suppose therefore that $s_1(k)<s\le 2t$, and we put $\tau=s-s_1(k)$. Then by H\"older's inequality, 
one has
$$\int_\grm|g(\alp)|^s\d\alp\le \Bigl( \int_\grm |g(\alp)|^{2t}\d\alp \Bigr)^a
\Bigl( \int_0^1|g(\alp)|^{2^{j+1}}\d\alp \Bigr)^b,$$
where
$$a=\frac{s-2^{j+1}}{2t-2^{j+1}}\quad \text{and}\quad b=\frac{2t-s}{2t-2^{j+1}}.$$
An application of Theorem \ref{theorem10.1}, in the guise of the estimate (\ref{10.7}), in combination 
with Hua's lemma (see \cite[Lemma 2.5]{Vau1997}) therefore yields the bound
$$\int_\grm|g(\alp)|^s\d\alp \ll X^\eps (X^{2t-k-1+\Del_{t,k}})^a(X^{2^{j+1}-j-1})^b\ll 
X^{s-k-\ome+\eps},$$
where $\ome=a(1-\Del_{t,k})-(k-j-1)b$. A modicum of computation reveals that
\begin{align*}
\ome&=\frac{(k-j-\Del_{t,k})(s-2t)+(1-\Del_{t,k})(2t-2^{j+1})}{2t-2^{j+1}}\\
&=\frac{(k-j-\Del_{t,k})(s-s_1(k))}{2t-2^{j+1}}\ge \tau/(2k^2),
\end{align*}
and consequently the second conclusion of the lemma follows with $\del=\tau/(4k^2)$.\par

When $s=s_1(k)$, the above discussion shows that
$$\int_\grm |g(\alp)|^s\d\alp \ll X^{s-k+\eps}.$$
But on writing $\grM=[0,1)\setminus \grm$, the methods of \cite[Chapter 4]{Vau1997} confirm that 
whenever $s\ge k+2$, one has
$$\int_\grM |g(\alp)|^s\d\alp \ll X^{s-k}.$$
The first conclusion of the lemma follows by combining these two estimates.
\end{proof}

The argument following the proof of \cite[Lemma 3.1]{Woo2012b} may now be adapted to deliver the 
upper bound contained in the following lemma.

\begin{lemma}\label{lemma10.4}
When $k\ge 3$, define $s_1(k)$ as in equation $(\ref{10.6})$. Then one has 
$\Gtil(k)\le \lfloor s_1(k)\rfloor+1$.
\end{lemma}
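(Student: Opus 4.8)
The plan is to run the Hardy--Littlewood circle method, adapting the argument following the proof of \cite[Lemma 3.1]{Woo2012b} and using Lemma \ref{lemma10.3} to dispose of the minor arcs. Recall that $\Gtil(k)$ denotes the least integer $t$ with the property that the asymptotic formula (\ref{1.5}) holds for all $s\ge t$ and all sufficiently large natural numbers $n$. It therefore suffices to verify (\ref{1.5}) for each integer $s$ with $s\ge \lfloor s_1(k)\rfloor+1$. The key observation is that any such $s$ satisfies the \emph{strict} inequality $s\ge \lfloor s_1(k)\rfloor+1>s_1(k)$, which is precisely the hypothesis under which the second conclusion of Lemma \ref{lemma10.3} supplies a genuine power saving on the minor arcs; this is also the reason why the floor-plus-one, rather than $\lceil s_1(k)\rceil$, appears in the statement.

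Fix an integer $s$ with $s\ge \lfloor s_1(k)\rfloor+1$ and write $X=\lfloor n^{1/k}\rfloor$, so that $R_{s,k}(n)=\int_0^1 g_k(\alp;X)^se(-n\alp)\d\alp$. Split this integral over the minor arcs $\grm=\grm_k$ of this section and their complement $\grM=[0,1)\setminus \grm$. On the minor arcs, Lemma \ref{lemma10.3} provides a positive number $\del=\del(k,s)$ for which
$$\Bigl| \int_\grm g_k(\alp;X)^se(-n\alp)\d\alp \Bigr| \le \int_\grm |g_k(\alp;X)|^s\d\alp \ll X^{s-k-\del},$$
and since $n\asymp X^k$ this is $o(n^{s/k-1})$. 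On the major arcs, the classical treatment of \cite[Chapter 4]{Vau1997}, in the form adapted in \cite{Woo2012b}, applies --- its hypotheses are met with room to spare, since $\lfloor s_1(k)\rfloor+1$ greatly exceeds $k+2$ in view of $s_1(k)\gg k^2$ --- and yields
$$\int_\grM g_k(\alp;X)^se(-n\alp)\d\alp = \frac{\Gam(1+1/k)^s}{\Gam(s/k)}\grS_{s,k}(n)n^{s/k-1}+o(n^{s/k-1}),$$
with the singular series and singular integral converging absolutely in this range. Adding the two contributions establishes (\ref{1.5}) for this value of $s$, and hence for all integers $s\ge \lfloor s_1(k)\rfloor+1$; thus $\Gtil(k)\le \lfloor s_1(k)\rfloor+1$. (For $s$ strictly larger than $\lfloor s_1(k)\rfloor+1$ one may, if desired, extract a factor $X^{s-\lfloor s_1(k)\rfloor-1}$ from the trivial bound $|g_k(\alp;X)|\le X$ on $\grm$ before invoking Lemma \ref{lemma10.3} at the exponent $\lfloor s_1(k)\rfloor+1$, so that a single choice of $\del$ serves uniformly.)

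There is no conceptual obstacle here beyond Lemma \ref{lemma10.3}: the only genuine points requiring attention are the bookkeeping ones, namely confirming that $s\ge \lfloor s_1(k)\rfloor+1$ lies comfortably above the range in which the major arc analysis of \cite[Chapter 4]{Vau1997} and the absolute convergence of $\grS_{s,k}(n)$ are guaranteed, and ensuring that the strict inequality $s>s_1(k)$ --- not merely $s\ge s_1(k)$ --- is available, which is exactly what the choice $\lfloor s_1(k)\rfloor+1$ secures.
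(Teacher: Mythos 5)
Your proposal is correct and follows essentially the same route as the paper, which simply defers to ``the argument following the proof of [Lemma 3.1]\cite{Woo2012b}'': a major/minor arc dissection in which the second conclusion of Lemma \ref{lemma10.3} (available precisely because $s\ge\lfloor s_1(k)\rfloor+1>s_1(k)$) supplies the power saving on $\grm$, and the standard major arc analysis handles $\grM$. Your explicit remarks on why the floor-plus-one secures the strict inequality, and on the comfortable margin for the major arc hypotheses, are accurate fillings-in of details the paper leaves to the citation.
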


This upper bound may of course be made explicit for smaller values of $k$. By using a na\"ive computer 
program to optimise the choice of parameters, one obtains the values of $s_1(k)$ reported in Table 2 
below. Here, we have rounded up in the final decimal place reported. The conclusion of Theorem 
\ref{theorem1.4} now follows by inserting the bounds for $s_1(k)$ supplied by Table 2 into Lemma 
\ref{lemma10.4}. When $k=3$ and $4$, the bounds for $s_1(k)$ supplied by Table 2 may be compared 
with the bounds for $\Gtil(k)$ available from Vaughan's refinements \cite{Vau1986a, Vau1986b} of Hua's 
work. The latter work supplies bounds in a sense tantamount to $s_1(3)\le 8$ and $s_1(4)\le 16$. Thus 
our present work, while coming close to these bounds, nonetheless fails the cigar test.\par

$$\boxed{\begin{matrix} k&3&4&5&6&7&8\\
s_1(k)&9.000&16.311&27.413&42.710&60.799&82.023\end{matrix}}$$
$$\boxed{\begin{matrix}k&9&10&11&12&13&14\\
s_1(k)&106.492&133.724&164.453&198.448&235.389&275.661\end{matrix}}$$
$$\boxed{\begin{matrix}k&15&16&17&18&19&20\\
s_1(k)&319.462&367.221&417.870&472.973&529.938&591.528\end{matrix}}$$
\vskip.2cm
\begin{center}\text{Table 2: Upper bounds for $s_1(k)$ described in equation (\ref{10.6}).}\end{center}
\vskip.1cm
\noindent 

For concreteness, we note that reasonable bounds may be computed by hand with relative ease. Thus a 
good approximation to the bound for $s_1(5)$ recorded in Table 2 derives from the permissible exponent 
$\Del_{18,5}=\tfrac{2}{7}$ that stems from Theorem \ref{theorem1.1} with $r=3$ and $k=5$, and then 
the exponent
$$s_0(5,18,3)=36-\frac{(1-\tfrac{2}{7})(36-16)}{5-3-\tfrac{2}{7}}=27\tfrac{2}{3}$$
that determines $s_1(5)$ by means of (\ref{10.6}). Similarly, one finds that the permissible exponent 
$\Del_{26,6}=\tfrac{1}{3}$ is made available by Theorem \ref{theorem1.1} with $r=3$ and $k=6$, and 
then the exponent
$$s_0(6,26,3)=52-\frac{(1-\tfrac{1}{3})(52-16)}{6-3-\tfrac{1}{3}}=43$$
determines an approximation to $s_1(6)$ by means of (\ref{10.6}).\par

For a clean, easy to state upper bound for $\Gtil(k)$ valid for $k\ge 5$, one may proceed as follows. First, 
apply Corollary \ref{corollary1.2} to obtain the permissible exponent $\Del_{t,k}=0$ with $t=k^2-k+1$. 
One then finds from (\ref{10.6}) that
\begin{align*}
s_1(k)\le s_0(k,k^2-k+1,4)&=2(k^2-k+1)-\frac{2(k^2-k+1)-32}{k-4}\\
&=2k^2-4k-4+\frac{6}{k-4},
\end{align*}
so that $s_1(k)<2k^2-4k-2$ whenever $k\ge 8$. Consequently, by reference to Lemma \ref{lemma10.4}, 
one obtains the following upper bound on $\Gtil(k)$.

\begin{corollary}\label{corollary10.5}
Whenever $k\ge 5$, one has $\Gtil(k)\le 2k^2-4k-2$.
\end{corollary}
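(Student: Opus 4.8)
The plan is to read the bound off the explicit minimisation (\ref{10.6}) defining $s_1(k)$, feeding in the permissible exponent supplied by Corollary \ref{corollary1.2}, and then to invoke Lemma \ref{lemma10.4}. First I would record that Corollary \ref{corollary1.2} provides the permissible exponent $\Del_{t,k}=0$ at the admissible point $t=k^2-k+1$, which in particular satisfies the constraint $\Del_{t,k}<1$ imposed in (\ref{10.6}). When $k\ge 8$ the index $j=4$ satisfies both $0\le j\le k-2$ and $2^j<t$, and hence $s_1(k)\le s_0(k,k^2-k+1,4)$. Substituting $\Del_{t,k}=0$, $j=4$ and $t=k^2-k+1$ into the definition of $s_0(k,t,j)$ gives
$$s_0(k,k^2-k+1,4)=2(k^2-k+1)-\frac{2(k^2-k+1)-32}{k-4}=2k^2-4k-4+\frac{6}{k-4},$$
which, since $6/(k-4)<2$ for $k\ge 8$, is strictly smaller than the integer $2k^2-4k-2$. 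Therefore $\lfloor s_1(k)\rfloor+1\le 2k^2-4k-2$, and Lemma \ref{lemma10.4} yields the asserted bound on $\Gtil(k)$ for every $k\ge 8$.

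The three values $k\in\{5,6,7\}$ require a small amount of extra care, since for $k=5$ the choice $j=4$ violates the constraint $j\le k-2$, while for $k=6$ and $7$ the exponent $s_0(k,k^2-k+1,4)$ is slightly too large to deliver the claimed bound through Lemma \ref{lemma10.4}. For these cases I would simply quote the sharper explicit bounds already established in Theorem \ref{theorem1.4}, namely $\Gtil(5)\le 28$, $\Gtil(6)\le 43$ and $\Gtil(7)\le 61$, and check the three numerical inequalities $28=2\cdot 5^2-4\cdot 5-2$, $43\le 2\cdot 6^2-4\cdot 6-2=46$ and $61\le 2\cdot 7^2-4\cdot 7-2=68$. (Alternatively, for $k=5$ one could insert the permissible exponent $\Del_{18,5}=\tfrac 27$ of Theorem \ref{theorem1.1} into (\ref{10.6}), exactly as in the worked example preceding Table 2, to obtain $s_1(5)\le 27\tfrac 23$ and hence $\Gtil(5)\le 28$ directly from Lemma \ref{lemma10.4}.)

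I do not anticipate any genuine difficulty: the computation of $s_0(k,k^2-k+1,4)$ is one line of algebra, and the rest is an appeal to Lemma \ref{lemma10.4} together with the bookkeeping for small $k$. The only mildly delicate point is the interplay of the two side conditions $j\le k-2$ and $2^j<t$ in (\ref{10.6}), which is precisely what forces $k=5$, $6$ and $7$ to be treated by hand rather than by the uniform estimate available for $k\ge 8$.
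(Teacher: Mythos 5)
Your proposal is correct and follows essentially the same route as the paper: the paper likewise takes $t=k^2-k+1$ with $\Del_{t,k}=0$ from Corollary \ref{corollary1.2} and $j=4$ in (\ref{10.6}), obtains $s_1(k)\le 2k^2-4k-4+6/(k-4)<2k^2-4k-2$ for $k\ge 8$, and appeals to Lemma \ref{lemma10.4}. Your explicit treatment of $k=5,6,7$ via the bounds of Theorem \ref{theorem1.4} is exactly the bookkeeping the paper leaves implicit, and is a welcome clarification.
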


For larger values of $k$, one may employ the methods of \cite[\S8]{FW2013} in order to improve on the 
bound given in Corollary \ref{corollary10.5}. The statement of our most general conclusion requires a 
little preparation. Let $\Del_{s,k}$ $(s\in \dbN)$ be the exponents defined in the discussion following 
(\ref{10.5}). For each $v\in \dbN$, we define
$$\Del_{v,k}^+=\min\{ \Del_{v,k}-1,\Del_{v,k-1}\}.$$
When $1\le t\le k^2-k+1$, we now define the positive number $u_0(k,t,v,w)$ by means of the relation
$$u_0(k,t,v,w)=2t-\frac{(1-\Del_{t,k})(2t-2v-w(w-1))}{1-\Del_{t,k}+\Del^+_{v,k}/w}.$$
We then put
\begin{equation}\label{10.8}
u_1(k)=\min_{\substack{1\le t\le k^2-k+1\\ \Del_{t,k}<1}}\underset{2v+w(w-1)<2t}
{\min_{1\le w\le k-1}\min_{v\ge 1}}\, u_0(k,t,v,w).
\end{equation}
We begin by announcing an analogue of Lemma \ref{lemma10.3} useful for intermediate and larger values 
of $k$.

\begin{lemma}\label{lemma10.6} Let $k$ be a natural number with $k\ge 3$, and suppose that $s$ is a 
real number with $s>u_1(k)$. Then there exists a positive number $\del=\del(k,s)$ with the property that
$$\int_\grm|g_k(\alp;X)|^s\d\alp \ll X^{s-k-\del}.$$
\end{lemma}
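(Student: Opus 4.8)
The plan is to follow the template of Lemma~\ref{lemma10.3}, interpolating now between the minor-arc mean value estimate of Theorem~\ref{theorem10.1} and a higher-order auxiliary estimate of the shape furnished by \cite[\S8]{FW2013}, rather than the cruder Hua-type bound employed there. Concretely, the defining relation for $u_0(k,t,v,w)$ mirrors that for $s_0(k,t,j)$, but with the pair $(2^j,\,k-j-1)$ replaced by $(2v+w(w-1),\,$ an effective exponent governed by $\Del^+_{v,k}/w)$: the role previously played by Hua's lemma $\int_0^1|g(\alp)|^{2^{j+1}}\d\alp\ll X^{2^{j+1}-j-1+\eps}$ is now played by an estimate of the form
$$\int_0^1|g_k(\alp;X)|^{2v+w(w-1)}\d\alp\ll X^{2v+w(w-1)-\sig+\eps},$$
where $\sig$ is computed from $\Del^+_{v,k}$ and $w$ via the argument of \cite[\S8]{FW2013}. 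First I would record this auxiliary estimate, importing $\Del^+_{v,k}=\min\{\Del_{v,k}-1,\Del_{v,k-1}\}$ precisely so that both the $(k-1)$-dimensional Vinogradov estimate and the degree-$k$ Vinogradov estimate (losing one unit of exponent) are available as inputs to the differencing/Weyl step; this is the mechanism by which \cite[\S8]{FW2013} extracts savings.

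Next I would set up the interpolation exactly as in the proof of Lemma~\ref{lemma10.3}. Let the triple $(t,v,w)$ realise the minimum implicit in \eqref{10.8}, so that $\Del_{t,k}<1$ and $2v+w(w-1)<2t$. If $s\ge 2t$ the conclusion is immediate from \cite[Theorem 2.1]{Woo2012b} together with the permissible exponent $\Del_{t,k}<1$, using the trivial bound $|g_k(\alp;X)|\le X$ to absorb the excess powers. When $u_1(k)<s<2t$, put $\tau=s-u_1(k)$ and apply H\"older's inequality in the form
$$\int_\grm|g_k(\alp;X)|^s\d\alp\le\Bigl(\int_\grm|g_k(\alp;X)|^{2t}\d\alp\Bigr)^a\Bigl(\int_0^1|g_k(\alp;X)|^{2v+w(w-1)}\d\alp\Bigr)^b,$$
with $a=\dfrac{s-2v-w(w-1)}{2t-2v-w(w-1)}$ and $b=\dfrac{2t-s}{2t-2v-w(w-1)}$. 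Inserting the minor-arc bound $\int_\grm|g_k|^{2t}\ll X^{2t-k-1+\Del_{t,k}+\eps}$ from Theorem~\ref{theorem10.1} (via \cite[Theorem 2.1]{Woo2012b}, just as in \eqref{10.7}) and the auxiliary estimate above yields $\int_\grm|g_k(\alp;X)|^s\d\alp\ll X^{s-k-\ome+\eps}$ for a certain $\ome=\ome(a,b)$, and a short computation should show that $\ome$ factors through $s-u_1(k)$ precisely because $u_0(k,t,v,w)$ was defined so that $\ome$ vanishes when $s=u_1(k)$; hence $\ome\gg\tau$ uniformly, and taking $\del$ to be a fixed small multiple of $\tau$ (dividing by a suitable power of $k$ to be safe) delivers the claim.

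The main obstacle I anticipate is verifying the auxiliary estimate with the stated exponent --- i.e.\ confirming that the argument of \cite[\S8]{FW2013}, fed with the improved permissible exponents $\Del_{v,k}$ coming from Theorems~\ref{theorem1.1} and~\ref{theorem9.2} (rather than whatever was available to Ford and Wooley), indeed produces a bound of the form $\int_0^1|g_k(\alp;X)|^{2v+w(w-1)}\d\alp\ll X^{2v+w(w-1)-\sig+\eps}$ with $\sig$ exactly matching the denominator $1-\Del_{t,k}+\Del^+_{v,k}/w$ appearing in the definition of $u_0$. This is bookkeeping rather than genuinely new mathematics: the $\S8$ method of \cite{FW2013} applies a $w$-fold Weyl differencing combined with the $(k-1)$-st Vinogradov mean value and the degree-$k$ mean value (with one unit of exponent sacrificed), and the quantity $\Del^+_{v,k}$ was defined above precisely to package the better of these two inputs. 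Once that estimate is in hand, the rest is the verbatim interpolation argument of Lemma~\ref{lemma10.3}, and the final assertion $\int_\grm|g_k(\alp;X)|^s\d\alp\ll X^{s-k-\del}$ for $s>u_1(k)$ follows with $\del=\del(k,s)>0$.
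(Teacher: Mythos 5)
Your proposal is correct and follows essentially the same route as the paper: split off the trivial case $s\ge 2t$, then for $u_1(k)<s\le 2t$ apply H\"older's inequality between the minor-arc bound (\ref{10.7}) and the auxiliary full mean value, with the definition of $u_0(k,t,v,w)$ guaranteeing a positive saving. The ``main obstacle'' you anticipate is not one in the paper's treatment: the auxiliary estimate is simply the quoted statement of \cite[Theorem 8.5]{FW2013}, namely $\int_0^1|g_k(\alp;X)|^{2v+w(w-1)}\d\alp\ll X^{2v+w(w-1)-k+\Del^+_{v,k}/w+\eps}$ (so your $\sig$ equals $k-\Del^+_{v,k}/w$), and no re-derivation is required.
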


\begin{proof} Let the parameters $v$, $w$ and $t$ correspond to the minimum implicit in (\ref{10.8}). 
Then in view of the implicit hypothesis $\Del_{t,k}<1$, we find just as in the proof of Lemma 
\ref{lemma10.3} that the desired conclusion is an immediate consequence of 
\cite[Theorem 2.1]{Woo2012b} when $s\ge 2t$, on making use of the trivial estimate 
$|g_k(\alp;X)|\le X$. We suppose therefore that $u_1(k)<s\le 2t$, and we put $\tau=s-u_1(k)$. Then by 
H\"older's inequality, one has
$$\int_\grm|g(\alp)|^s\d\alp\le \Bigl( \int_\grm |g(\alp)|^{2t}\d\alp \Bigr)^a
\Bigl( \int_0^1|g(\alp)|^{2v+w(w-1)}\d\alp \Bigr)^{1-a},$$
where
$$a=\frac{s-2v-w(w-1)}{2t-2v-w(w-1)}.$$

\par We next apply Theorem \ref{theorem10.1}, as embodied in (\ref{10.7}), and then wield the 
estimate supplied by \cite[Theorem 8.5]{FW2013}, thus obtaining the bound
\begin{equation}\label{10.9}
\int_\grm |g(\alp)|^s\d\alp \ll X^\eps \left( X^{2t-k-1+\Del_{t,k}}\right)^a 
\left( X^{2v+w(w-1)-k+\Del_{v,k}^+/w}\right)^{1-a}.
\end{equation}
Since we may suppose that
$$s>u_0(k,t,v,w)=\frac{2t\Del_{v,k}^++w(1-\Del_{t,k})(2v+w(w-1))}{w(1-\Del_{t,k})+\Del_{v,k}^+},
$$
we see that
$$a(1-\Del_{t,k})>(1-a)\Del_{v,k}^+/w,$$
and thus the conclusion of the lemma follows at once from (\ref{10.9}).
\end{proof}

The argument following the proof of \cite[Lemma 3.1]{Woo2012b} may be adapted on this occasion to
 give the following upper bound for $\Gtil(k)$.

\begin{lemma}\label{lemma10.7}
When $k\ge 3$, define $u_1(k)$ as in equation $(\ref{10.8})$. Then one has 
$\Gtil(k)\le \lfloor u_1(k)\rfloor +1$.
\end{lemma}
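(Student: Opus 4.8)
The plan is to run the Hardy--Littlewood circle method with the minor arc estimate supplied by Lemma \ref{lemma10.6}, following the argument deployed after the proof of \cite[Lemma 3.1]{Woo2012b}. Put $s=\lfloor u_1(k)\rfloor+1$, so that $s$ is a natural number with $s>u_1(k)$ and $s\ge k+1$, and let $n$ be a large natural number. Set $X=\lfloor n^{1/k}\rfloor$; then every representation of $n$ as a sum of $s$ positive integral $k$th powers has all its parts no larger than $X$, so by orthogonality one has
$$R_{s,k}(n)=\int_0^1 g_k(\alp;X)^se(-n\alp)\d\alp .$$
I would then dissect the unit interval as $[0,1)=\grM\cup\grm$, where $\grm=\grm_k$ is the minor arc set introduced at the start of \S10 and $\grM=[0,1)\setminus\grm$.

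First I would dispose of the minor arc contribution. Since $s>u_1(k)$, Lemma \ref{lemma10.6} provides a positive number $\del=\del(k,s)$ for which $\int_\grm|g_k(\alp;X)|^s\d\alp\ll X^{s-k-\del}$. As $X\le n^{1/k}$, this yields
$$\Bigl|\int_\grm g_k(\alp;X)^se(-n\alp)\d\alp\Bigr|\le \int_\grm|g_k(\alp;X)|^s\d\alp\ll n^{s/k-1-\del/k}=o(n^{s/k-1}).$$

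Next I would handle the major arcs $\grM$ by the now-classical analysis: $\grM$ consists of those $\alp$ admitting an approximation $|q\alp-a|\le(2k)^{-1}X^{1-k}$ with $(a,q)=1$ and $q\le(2k)^{-1}X$, and one prunes these wide arcs down to the standard narrow major arcs, using the mean value estimates of Theorem \ref{theorem1.1} — or, equivalently, the minor arc bounds of Theorem \ref{theorem10.1} — to control $g_k(\alp;X)$ on the intervening arcs, exactly as in the argument following \cite[Lemma 3.1]{Woo2012b} together with \cite[Chapter 4]{Vau1997}. Since $s=\lfloor u_1(k)\rfloor+1$ is large (in particular $s\ge k+2$, while for large $k$ one has $s\asymp k^2$), both the singular series $\grS_{s,k}(n)$ and the singular integral converge, and this analysis delivers
$$\int_\grM g_k(\alp;X)^se(-n\alp)\d\alp=\frac{\Gam(1+1/k)^s}{\Gam(s/k)}\grS_{s,k}(n)n^{s/k-1}+o(n^{s/k-1}).$$
Combining this with the minor arc bound establishes the asymptotic formula (\ref{1.5}) for this value of $s$. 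The same reasoning applies verbatim with any integer $t\ge\lfloor u_1(k)\rfloor+1$ in place of $s$, since Lemma \ref{lemma10.6} remains valid for every real $t>u_1(k)$ and the major arc treatment only improves as $t$ grows; hence (\ref{1.5}) holds for all $s\ge\lfloor u_1(k)\rfloor+1$, and therefore $\Gtil(k)\le\lfloor u_1(k)\rfloor+1$.

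I expect the only genuine obstacle to lie in the major arc pruning: passing from the wide arcs $\grM$, whose denominators may be as large as $(2k)^{-1}X$, down to the narrow major arcs with total error $o(n^{s/k-1})$ demands pointwise bounds for $g_k(\alp;X)$ on the intermediate arcs, and one must confirm that $s=\lfloor u_1(k)\rfloor+1$ exceeds the threshold above which the available Weyl-type and mean value savings suffice. This is precisely the ingredient imported from the argument following \cite[Lemma 3.1]{Woo2012b}, now fed the new minor arc estimate of Lemma \ref{lemma10.6}; the remaining steps — the orthogonality identity, the trivial majorisation of the minor arc integral, and the assembly of the main term — are entirely routine.
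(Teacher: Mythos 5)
Your proposal is correct and follows essentially the same route as the paper, which itself gives no details beyond the remark that ``the argument following the proof of [Woo2012b, Lemma 3.1] may be adapted'': the circle method with the minor arcs disposed of via Lemma \ref{lemma10.6} and the major arcs handled by the pruning argument imported from \cite{Woo2012b}. You have correctly identified the major arc pruning as the only nontrivial ingredient, and that is precisely the step the paper delegates to the cited reference.
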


It would appear that Lemma \ref{lemma10.7} yields superior bounds for $\Gtil(k)$ as compared to 
Lemma \ref{lemma10.4} only for $k$ exceeding $25$ or thereabouts. However, for large values of $k$ 
one obtains substantial quantitative improvements on previous bounds for $\Gtil(k)$, these being reported 
in Theorem \ref{theorem1.3}. Suppose that $k$ is a large natural number, and let $\bet$ be a positive 
parameter to be determined in due course. We take
$$r=\lfloor \tfrac{1}{2}(k+1)\rfloor ,\quad w=\lfloor \bet k\rfloor,\quad \text{and}\quad 
v=k^2-rk+\tfrac{1}{2}r(r+3).$$
Then one has
$$\tfrac{5}{8}k^2\le v\le \tfrac{5}{8}k^2+k,$$
so by Theorem \ref{theorem1.1} one finds that the exponent $\Del_{v,k}$ is permissible, where
$$\Del_{v,k}\le \frac{r(r-1)(2k-5)}{6(\tfrac{5}{8}k^2-k+1)}=\tfrac{2}{15}k+O(1).$$
Also, by taking $t=k^2-k+1$, one sees from Corollary \ref{corollary1.2} that $\Del_{t,k}=0$. Then we 
deduce from (\ref{10.8}) that
$$u_1(k)\le 2(k^2-k+1)-\frac{2k^2-2(\tfrac{5}{8}k^2)-(\bet k)^2+O(k)}
{1+\tfrac{2}{15}k/(\bet k)+O(1/k)}.$$
It follows that
\begin{align*}
u_1(k)/(2k^2)&\le 1-\frac{\bet(\frac{3}{8}-\tfrac{1}{2}\bet^2)}{\bet+\frac{2}{15}}+O(1/k)\\
&=\frac{60\bet^3+75\bet+16}{120\bet+16}+O(1/k).\end{align*}
A modest computation confirms that the optimal choice for the parameter $\bet$ is $\xi$, where $\xi$ is 
the real root of the polynomial equation $20\xi^3=1-4\xi^2$. With this choice for $\bet$, one finds that
$$u_1(k)\le \left(\frac{19+75\xi-12\xi^2}{8+60\xi}\right)k^2+O(k).$$
The conclusion of Theorem \ref{theorem1.3} is now immediate from Lemma \ref{lemma10.7}.

\par We finish by noting that the proof of \cite[Theorem 4.2]{Woo2012b} may be adapted in the obvious 
manner so as to establish that when $s>\min\{ s_1(k),u_1(k)\}$, then the anticipated asymptotic formula 
holds for the number of integral solutions of the diagonal equation
$$a_1x_1^k+\ldots +a_sx_s^k=0,$$
with $|\bfx|\le B$. Here, the coefficients $a_i$ $(1\le i\le s)$ are fixed integers. Similar improvements may 
be wrought in upper bounds for $\Gtil^+(k)$, the least number of variables required to establish that the 
anticipated asymptotic formula in Waring's problem holds for almost all natural numbers $n$. Thus, one 
may adapt the methods of \cite[\S5]{Woo2012b} to show that
$$\Gtil^+(k)\le 1+\min \{ \lfloor \tfrac{1}{2}s_1(k)\rfloor ,\lfloor \tfrac{1}{2}u_1(k)\rfloor \}.$$
In this way, one finds that for large values of $k$, one has $\Gtil^+(k)\le 0.772k^2$, and further that 
for $5\le k\le 20$, one has $\Gtil^+(k)\le H^+(k)$, where $H^+(k)$ is given in Table 3 below.

$$\boxed{\begin{matrix} k&5&6&7&8&9&10&11&12\\
H^+(k)&14&22&31&42&54&67&83&100\end{matrix}}$$
$$\boxed{\begin{matrix}k&13&14&15&16&17&18&19&20\\
H^+(k)&118&138&160&184&209&237&265&296\end{matrix}}$$
\vskip.2cm
\begin{center}\text{Table 3: Upper bounds for $H^+(k)$.}\end{center}
\vskip.1cm
\noindent 

\section{Estimates of Weyl type} In this section we briefly discuss some applications of the mean value estimates 
supplied by Theorems \ref{theorem1.1} and \ref{theorem9.2} to analogues of Weyl's inequality. Our first 
conclusion has the merit of being simple to state, and improves on \cite[Theorem 11.1]{Woo2013} for 
$k\ge 4$. We recall the definition of $f_k(\bfalp;X)$ from (\ref{2.2}).

\begin{theorem}\label{theorem11.1} Let $k$ be an integer with $k\ge 4$, and let $\bfalp\in \dbR^k$.
 Suppose that there exists a natural number $j$ with $2\le j\le k$ such that, for some $a\in \dbZ$ and
 $q\in \dbN$ with $(a,q)=1$, one has $|\alp_j-a/q|\le q^{-2}$ and $q\le X^j$. Then one has
$$f_k(\bfalp;X)\ll X^{1+\eps}(q^{-1}+X^{-1}+qX^{-j})^{\sig(k)},$$
where $\sig(k)^{-1}=2(k^2-3k+3)$.
\end{theorem}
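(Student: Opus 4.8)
The plan is to obtain Theorem~\ref{theorem11.1} by combining the optimal mean value estimate of Corollary~\ref{corollary1.2}, \emph{in degree $k-1$}, with a standard Weyl-type inequality that converts a Vinogradov mean value bound into a pointwise bound for a degree-$k$ exponential sum whose $j$-th coefficient admits a rational approximation. Since $k\ge 4$, we have $k-1\ge 3$, so Corollary~\ref{corollary1.2} applies with $k$ replaced by $k-1$ and yields
$$J_{t,k-1}(X)\ll X^{2t-\frac12(k-1)k+\eps}\qquad\text{for every integer }t\ge (k-1)^2-(k-1)+1=k^2-3k+3.$$
The target exponent satisfies $\sigma(k)^{-1}=2(k^2-3k+3)$, exactly twice this threshold, which is the signature of a single degree-lowering step followed by Vinogradov's mean value theorem in degree $k-1$ with $t=k^2-3k+3$ variables.

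The Weyl-type input I would use reads: if $\boldsymbol\beta\in\dbR^d$ with $d\ge 2$, if $J_{t,d-1}(X)\ll X^{2t-\frac12 d(d-1)+\eps}$, and if for some $l$ with $2\le l\le d$ there exist $a\in\dbZ$, $q\in\dbN$ with $(a,q)=1$ and $\abs{\beta_l-a/q}\le q^{-2}$, then $f_d(\boldsymbol\beta;X)\ll X^{1+\eps}(q^{-1}+X^{-1}+qX^{-l})^{1/(2t)}$; the theorem then follows on taking $d=k$, $l=j$ and $t=k^2-3k+3$. To establish this input I would proceed classically, along the lines of deducing Weyl's inequality from Vinogradov's method: with a parameter $1\le H\le X$, one differencing (van der Corput) step gives
$$\abs{f_d(\boldsymbol\beta;X)}^2\ll X^2H^{-1}+XH^{-1}\sum_{1\le h\le H}\Bigl|\sum_{x\in I_h}e(\Psi_h(x))\Bigr|,$$
where $\Psi_h(x)=\sum_{i=1}^d\beta_i((x+h)^i-x^i)$ is a polynomial of degree $d-1$ in $x$; one then raises to the power $2t$, applies H\"older's inequality and orthogonality, and is left with the number of solutions of the degree-$(d-1)$ Vinogradov system weighted by one-dimensional exponential sums in $h$ whose phases are polynomials in $h$ carrying the coefficient $l\beta_l$. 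Bounding those one-dimensional sums by the classical estimate $\sum_{n\le N}\min(N',\lVert n\theta\rVert^{-1})\ll (1+N/q)(N'+q\log q)$, applied with $\theta$ a rational perturbation of a nonzero integral multiple of $\beta_l$, produces the factor $q^{-1}+X^{-1}+qX^{-l}$ after a suitable choice of the ranges $N,N'$, while the hypothesis on $J_{t,d-1}(X)$ controls the remaining solution count; a final optimisation of $H$ delivers the exponent $1/(2t)$. Here the restriction $l\ge 2$ is forced because the coefficient $\beta_1$ occurs only in the constant term of $\Psi_h$, which disappears on passing to absolute values.

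The delicate point is the case $j<k$. When $j=k$ the coefficient of $x^{k-1}$ in $\Psi_h$ is precisely $k\alpha_k h$, so the rational approximation to $\alpha_k$ is laid bare; for $j<k$, however, the relevant coefficient of $\Psi_h$ equals $j\alpha_j h$ only up to a polynomial in $h$ with coefficients drawn from $\alpha_{j+1},\dots,\alpha_k$, over which we have no control. The way around this is to arrange the degree reduction so that these unwanted coefficients become attached to powers of $h$ strictly higher than the one on which the approximation to $\alpha_j$ acts, and then to discard the corresponding phases after a further application of orthogonality, leaving $\alpha_j$ as the sole surviving obstruction to cancellation; carrying out this bookkeeping without degrading the exponent $1/(2t)$ is the main technical obstacle. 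The endpoint terms $X^{-1}$ and $qX^{-j}$ in the final bound arise respectively from the trivial estimate $\abs{f_k}\le X$, invoked when $H$ or $q$ is extreme, and from the length $\asymp X^j$ of the relevant summation range in the geometric-series bound, while the factor $X^\eps$ absorbs the customary divisor-function losses.
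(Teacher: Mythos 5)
Your proposal is correct and follows essentially the same route as the paper: the new input is exactly Corollary~\ref{corollary1.2} applied in degree $k-1$ with $s=k^2-3k+3$ variables, inserted into a classical Weyl-type reduction of $f_k(\bfalp;X)$ to $J_{s,k-1}(2X)$. The only difference is that the paper simply cites this reduction as \cite[Theorem 5.2]{Vau1997} (which already covers every $j$ with $2\le j\le k$, so the ``delicate point'' you flag for $j<k$ is resolved in the cited result), whereas you sketch a re-derivation of it via differencing.
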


\begin{proof} Under the hypotheses of the statement of the theorem, we find that 
\cite[Theorem 5.2]{Vau1997} shows that for $s\in \dbN$, one has
$$f_k(\bfalp;X)\ll (J_{s,k-1}(2X)X^{\frac{1}{2}k(k-1)}(q^{-1}+X^{-1}+qX^{-j}))^{1/(2s)}\log (2X).$$
The conclusion of the theorem therefore follows on taking
$$s=(k-1)^2-(k-1)+1=k^2-3k+3,$$
for in such circumstances Corollary \ref{corollary1.2} delivers the bound
$$J_{s,k-1}(2X)\ll X^{2s-\frac{1}{2}k(k-1)+\eps}.$$
\end{proof}

The proof of \cite[Theorem 1.6]{Woo2012a} may be easily adapted to deliver estimates depending on 
common Diophantine approximations.

\begin{theorem}\label{theorem11.2} Let $k$ be an integer with $k\ge 4$, and let $\tau$ and $\del$ be
 real numbers with $\tau^{-1}>4(k^2-3k+3)$ and $\del>k\tau$. Suppose that $X$ is sufficiently large in
 terms of $k$, $\del$ and $\tau$, and further that $|f_k(\bfalp;X)|>X^{1-\tau}$. Then there exist integers
 $q$, $a_1,\ldots,a_k$ such that $1\le q\le X^\del$ and $|q\alp_j-a_j|\le X^{\del-j}$ $(1\le j\le k)$.
\end{theorem}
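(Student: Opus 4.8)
The plan is to follow the proof of \cite[Theorem 1.6]{Woo2012a} essentially line by line, the one substantive change being to feed in the mean value estimate of Corollary \ref{corollary1.2} wherever that argument appeals to a bound for a Vinogradov integral; it is this substitution that relaxes the hypothesis on $\tau$ to the stated form $\tau^{-1}>4(k^2-3k+3)$. Note that $k^2-3k+3=(k-1)^2-(k-1)+1$, so Corollary \ref{corollary1.2} applied with $k$ replaced by $k-1$ delivers $J_{s,k-1}(X)\ll X^{2s-\tfrac{1}{2}k(k-1)+\eps}$ precisely for $s\ge k^2-3k+3$. The resulting threshold $\tau^{-1}>4(k^2-3k+3)=2\sig(k)^{-1}$, with $\sig(k)$ the Weyl exponent of Theorem \ref{theorem11.1}, reflects the usual principle that a Weyl-type estimate which pins down the whole coefficient vector $\bfalp$ by a simultaneous rational approximation requires the largeness hypothesis to be, in the exponent, twice as strong as one which controls only a single coefficient.

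I would run the argument in three stages. First, exploit the translation-dilation invariance of $(\ref{1.1})$: writing $\psi_i(y)=\sum_{i\le j\le k}\binom{j}{i}\alp_j y^{j-i}$ $(1\le i\le k)$ and $\bfpsi(y)=(\psi_1(y),\ldots,\psi_k(y))$, one has $f_k(\bfalp;[y+1,y+X])=e(\ome_y)f_k(\bfpsi(y);X)$ for a suitable unimodular factor $e(\ome_y)$, and since the left side differs from $f_k(\bfalp;X)$ by at most $2y$ in modulus, the hypothesis $|f_k(\bfalp;X)|>X^{1-\tau}$ forces $|f_k(\bfpsi(y);X)|\gg X^{1-\tau}$ for every integer $y$ with $1\le y\le cX^{1-\tau}$. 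Second, use local constancy: if $|f_k(\bfbet;X)|$ is large at $\bfbet$, then $|f_k(\cdot;X)|$ remains $\gg X^{1-\tau}$ throughout the box centred at $\bfbet$ whose $j$th side has length comparable to $X^{-\tau-j}$ $(1\le j\le k)$. Third, bring in the mean value bound: since $\oint|f_k(\bfbet;X)|^{2s}\d\bfbet=J_{s,k}(X)$, and likewise, after the Weyl reduction of Theorem \ref{theorem11.1} (via \cite[Theorem 5.2]{Vau1997}), a companion bound involving $J_{s,k-1}(2X)$, the set on which $|f_k(\cdot;X)|>X^{1-\tau}$ has small measure. If $\bfalp$ had no rational approximant of the relevant quality, the $\gg X^{1-\tau}$ translate-boxes produced above would be essentially pairwise disjoint and would overfill the ambient torus, contradicting the measure bound once $\tau^{-1}>4(k^2-3k+3)$. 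This dichotomy places $\bfalp$ in the major arcs at some level $X^{\del_0}$, with $\del_0$ a multiple of $\tau$ depending only on $k$.

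The passage from this to the precise conclusion — integers $q$, $a_1,\ldots,a_k$ with $1\le q\le X^\del$ and $|q\alp_j-a_j|\le X^{\del-j}$ for every $\del>k\tau$ — is a standard pruning. One applies Dirichlet's theorem to each coordinate $\alp_j$, combines the resulting denominators into a single modulus $q$, and uses the elementary bound for complete exponential sums together with the archimedean analysis of the main term of $f_k$ on a major arc (the size of that main term being $\asymp q^{-1/k}X$ up to lower-order factors) to see that largeness of $|f_k(\bfalp;X)|$ beyond $X^{1-\tau}$ forces $q\ll X^{k\tau}$ and $|q\alp_j-a_j|\ll X^{k\tau-j}$, whence the hypothesis $\del>k\tau$ supplies exactly the room required; a further application of the mean-value-plus-Weyl input, if needed, sharpens $\del_0$ to any such $\del$.

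The conceptually new content of the proof is thus nil — it amounts to inserting Corollary \ref{corollary1.2} into the machinery of \cite[Theorem 1.6]{Woo2012a} — and the genuine labour lies in two pieces of bookkeeping: quantifying the near-disjointness of the translate-boxes when $\bfalp$ is poorly approximable, which is where the exponent $4(k^2-3k+3)$ must be extracted, and the amalgamation of the per-coordinate Dirichlet denominators into one modulus $q\le X^\del$ under the sole hypothesis $\del>k\tau$. I expect the first of these to be the main obstacle, although it is one already surmounted in \cite{Woo2012a}, so that the task here is chiefly to carry it through with the improved mean value exponent in place.
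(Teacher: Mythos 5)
Your proposal is correct and coincides with the paper's own treatment: the paper gives no argument beyond the remark that the proof of \cite[Theorem 1.6]{Woo2012a} "may be easily adapted", the adaptation being precisely the substitution you describe, namely feeding Corollary \ref{corollary1.2} at degree $k-1$ (so with $s=(k-1)^2-(k-1)+1=k^2-3k+3$) into that argument to produce the threshold $\tau^{-1}>4(k^2-3k+3)$. Your identification of where the improved mean value enters and why the simultaneous-approximation threshold is twice the single-coefficient one of Theorem \ref{theorem11.1} matches the intended proof.
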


The proof of \cite[Theorem 1.7]{Woo2012a} likewise delivers the following result concerning the
 distribution modulo $1$ of polynomial sequences. Here, we write $\|\tet\|$ for 
$\underset{y\in \dbZ}{\min}|\tet-y|$.

\begin{theorem}\label{theorem11.3} Let $k$ be an integer with $k\ge 4$, and define $\tau(k)$ by
 $\tau(k)^{-1}=4(k^2-3k+3)$. Then whenever $\bfalp\in \dbR^k$ and $N$ is sufficiently large in terms
 of $k$ and $\eps$, one has
$$\min_{1\le n\le N}\|\alp_1n+\alp_2n^2+\ldots +\alp_kn^k\|<N^{\eps-\tau(k)}.$$
\end{theorem}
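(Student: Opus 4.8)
The plan is to argue by contradiction, converting the hypothetical absence of small values of $\|P(n)\|$ into an anomalously large Weyl sum, and then extracting a contradiction from Theorem~\ref{theorem11.2}. Write $P(n)=\alp_1n+\alp_2n^2+\ldots+\alp_kn^k$, put $\del=N^{\eps-\tau(k)}$, and suppose for a contradiction that $\|P(n)\|\ge\del$ for all $1\le n\le N$. Since $hP(n)=(h\alp_1)n+\ldots+(h\alp_k)n^k$, we have $\sum_{1\le n\le N}e(hP(n))=f_k(h\bfalp;N)$ in the notation of (\ref{2.2}), where $h\bfalp=(h\alp_1,\ldots,h\alp_k)$. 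First I would produce a nonzero integer $h$, of size a bounded power of $N$ well below $N$, for which $|f_k(h\bfalp;N)|$ exceeds $N^{1-\tau}$ for a suitable $\tau<\tau(k)$; then feed $h\bfalp$ into Theorem~\ref{theorem11.2} to obtain a common rational approximation to the $\alp_j$; and finally use this approximation to exhibit an explicit $n$ with $1\le n\le N$ and $\|P(n)\|<\del$, contradicting the assumption.

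For the first step I would use the standard periodic triangle minorant $v_\del(t)=\max\{0,1-\|t\|/\del\}$, which satisfies $0\le v_\del\le 1$, vanishes when $\|t\|\ge\del$, has mean $\hat v_\del(0)=\del$, and has Fourier coefficients $\hat v_\del(h)=(\pi^2h^2\del)^{-1}\sin^2(\pi h\del)$ obeying $\sum_{h\ne 0}|\hat v_\del(h)|\ll 1$ and $\sum_{|h|>H}|\hat v_\del(h)|\ll(\del H)^{-1}$. By hypothesis $v_\del(P(n))=0$ for every $n\le N$, so expanding in an absolutely convergent Fourier series and summing over $n$ yields, for every $H\ge 1$,
\begin{equation*}
\del N=\hat v_\del(0)N=-\sum_{h\ne 0}\hat v_\del(h)f_k(h\bfalp;N)\ll\max_{0<|h|\le H}|f_k(h\bfalp;N)|+N(\del H)^{-1}.
\end{equation*}
Choosing $H=\lceil\del^{-1}N^{\eps_0}\rceil$ with $\eps_0=\tfrac12\bigl(1-(k+1)\tau(k)\bigr)$, the second term is $O(N^{1-\eps_0})$; since $k\ge 4$ a brief computation gives $\eps_0>\tau(k)$ (equivalent to $4k^2-13k+9>0$), so $N^{1-\eps_0}$ is negligible against $\del N=N^{1-\tau(k)+\eps}$. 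Hence there is an integer $h$ with $0<|h|\le H$ and $|f_k(h\bfalp;N)|\gg N^{1-\tau(k)+\eps}$, which exceeds $N^{1-\tau}$ for $\tau=\tau(k)-\tfrac12\eps$ once $N$ is large.

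For the second step, note that $\tau=\tau(k)-\tfrac12\eps$ satisfies $\tau^{-1}>4(k^2-3k+3)$, so I would apply Theorem~\ref{theorem11.2} to $h\bfalp$ with this $\tau$ and with the admissible choice $\del_1=k\tau(k)>k\tau$, obtaining integers $q,a_1,\ldots,a_k$ with $1\le q\le N^{\del_1}$ and $|qh\alp_j-a_j|\le N^{\del_1-j}$ for $1\le j\le k$. Putting $Q=q|h|$ one finds $1\le Q\le 2N^{\del_1+\tau(k)+\eps_0}=2N^{(1+(k+1)\tau(k))/2}$, which is at most $N$ for large $N$ since $(k+1)\tau(k)<1$; moreover $\|Q\alp_j\|=\|qh\alp_j\|\le N^{\del_1-j}$ for each $j$. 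Writing $Q\alp_j=A_j+\eta_j$ with $A_j\in\dbZ$ and $|\eta_j|\le N^{\del_1-j}$, the identity $P(Q)=\sum_{j=1}^kA_jQ^{j-1}+\sum_{j=1}^k\eta_jQ^{j-1}$ has integral first sum, so $\|P(Q)\|\le\sum_{j=1}^k|\eta_j|Q^{j-1}\ll_k N^{\del_1-1}=N^{k\tau(k)-1}$. Since $(k+1)\tau(k)<1+\eps$, for $N$ sufficiently large in terms of $k$ and $\eps$ this is smaller than $\del=N^{\eps-\tau(k)}$, contradicting $\|P(Q)\|\ge\del$. This contradiction proves the theorem.

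I do not expect a genuine obstacle: all the arithmetic substance is already packaged in Theorem~\ref{theorem11.2} (and thence in the mean value bound of Corollary~\ref{corollary1.2}), and what remains is bookkeeping. The one point needing a little care is the choice of the truncation level $H$: it must be large enough that the minorant's high-frequency tail is negligible, yet small enough that $Q=q|h|$ stays below $N$ after the dilation $\bfalp\mapsto h\bfalp$. Both requirements hold simultaneously precisely because $(k+3)\tau(k)<1$ for $k\ge 4$, which is the arithmetic of the exponent $\tau(k)^{-1}=4(k^2-3k+3)$.
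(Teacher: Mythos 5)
Your argument is correct, and it is essentially the argument the paper has in mind: the paper simply cites the proof of \cite[Theorem 1.7]{Woo2012a}, which runs exactly along your lines (large fractional parts for all $n\le N$ force, via a Fej\'er-kernel expansion, a Weyl sum $|f_k(h\bfalp;N)|>N^{1-\tau}$ for some small nonzero $h$; Theorem \ref{theorem11.2} then yields a simultaneous rational approximation, and evaluating the polynomial at $Q=q|h|\le N$ produces a small fractional part, a contradiction). The exponent bookkeeping you carry out — in particular that $(k+3)\tau(k)<1$ and $(k+1)\tau(k)<1$ for $k\ge 4$ — is exactly what makes the choices of $H$ and $\del_1$ compatible, so there is no gap.
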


In each of Theorems \ref{theorem11.2} and \ref{theorem11.3}, the exponent $4(k^2-3k+3)$ represents 
an improvement on the exponent $4k(k-2)$ made available in \cite[Theorems 11.2 and 11.3]{Woo2013}. 
In \cite[Theorem 11.1]{Woo2013}, meanwhile, we established a conclusion similar to that of Theorem 
\ref{theorem11.1}, though with a weaker exponent $\sig(k)$ satisfying $\sig(k)^{-1}=2k(k-2)$. Our 
estimates supersede the Weyl exponent $\sig(k)=2^{1-k}$ when $k\ge 7$ (see 
\cite[Lemma 2.4]{Vau1997} and \cite[Theorem 5.1]{Bak1986}).\par

If one restricts to the situation where all coefficients save $\alp_k$ are zero, then further modest 
improvements may be obtained. When $\tet\in (0,k)$, let $\grm_\tet $ denote the set of real numbers
 $\alp$ having the property that, whenever $a\in \dbZ$ and $q\in \dbN$ satisfy $(a,q)=1$ and 
$|q\alp-a|\le P^{\tet-k}$, then one has $q>P^\tet$. The simplest improvements in earlier Weyl exponents 
stem from the following result of Boklan and Wooley \cite[Theorem 1.1]{BW2012}.

\begin{lemma}\label{Lemma11.4} Let $k\in \dbN$ with $k\ge 4$, and suppose that the exponent
 $\Del_{s,k-1}$ is permissible for $s\ge k$. Then for each $\eps>0$, one has
$$\sup_{\alp\in \grm_1}|g_k(\alp;X)|\ll X^{1-\sig(k)+\eps},$$
where
$$\sig(k)=\max_{s\ge k}\left( \frac{3-\Del_{s,k-1}}{6s+2}\right) .$$
\end{lemma}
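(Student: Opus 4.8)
The plan is to follow the argument of Boklan and Wooley \cite{BW2012}, taking the parameter implicit in the definition of $\grm_1$ to be $X$. Fix $\alpha\in\grm_1$. The first step is to extract a rational approximation to $\alpha$: by Dirichlet's theorem there are coprime integers $a$ and $q$ with $1\le q\le X^{k-1}$ and $|q\alpha-a|\le X^{1-k}$, and the defining property of the minor arc set $\grm_1$ then forces $q>X$. Thus $X<q\le X^{k-1}$, so that on the one hand $g_k(\alpha;X)$ is evaluated at a point free of any very good rational approximation of small denominator, while on the other hand that denominator does not exceed $X^{k-1}$. Both inequalities are essential: the lower bound $q>X$ is what guarantees genuine cancellation, and the upper bound $q\le X^{k-1}$ is precisely what keeps the Diophantine factor $q^{-1}+X^{-1}+qX^{-k}$ of size $\ll X^{-1}$.

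Next I would reduce the degree from $k$ to $k-1$ by a single Weyl-differencing step, bounding
$$|g_k(\alpha;X)|^2\ll X+\sum_{1\le h<X}\Bigl|\sum_{x}e(\alpha\psi_h(x))\Bigr|,$$
where $\psi_h(x)=x^k-(x-h)^k$ has degree $k-1$ in $x$ with leading coefficient $kh\alpha$. Each inner sum is a complete exponential sum of degree $k-1$ over an interval of length at most $X$; applying H\"older's inequality in the variable $h$ and invoking orthogonality expresses the average over $h$ in terms of the Vinogradov integral $J_{s,k-1}(2X)$, while the Diophantine information above controls the arithmetic factor accompanying it. Substituting the permissible bound $J_{s,k-1}(2X)\ll X^{2s-\tfrac{1}{2}k(k-1)+\Del_{s,k-1}+\eps}$ (available for every $s\ge k$ by hypothesis) and carrying out the ensuing bookkeeping, one is led to an estimate of the shape
$$|g_k(\alpha;X)|^{6s+2}\ll X^{6s-1+\Del_{s,k-1}+\eps},$$
and hence $|g_k(\alpha;X)|\ll X^{1-(3-\Del_{s,k-1})/(6s+2)+\eps}$. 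Since $s$ may be any integer with $s\ge k$, optimising this exponent over $s$ yields exactly $\sig(k)=\max_{s\ge k}(3-\Del_{s,k-1})/(6s+2)$; as the implied constants are uniform in $\alpha\in\grm_1$, taking the supremum completes the proof.

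The main obstacle — and the reason the exponent that emerges is $(3-\Del_{s,k-1})/(6s+2)$ rather than the more optimistic $(1-\Del_{s,k-1})/(2s)$ — lies in the regime where $q$ is close to $X^{k-1}$. There a direct Weyl estimate for $g_k(\alpha;X)$ itself is useless, and the differencing step must be exploited genuinely: after differencing, the leading coefficient $kh\alpha$ of $\psi_h$ has a denominator lying in a range favourable to a degree-$(k-1)$ mean-value estimate even when $\alpha$ admits a good rational approximation only at the scale $X^{k-1}$. Making this work uniformly in $h$ is what forces one to pass to the $(3s+1)$-th power of $|g_k(\alpha;X)|^2$, and hence produces the factor $6s$ together with the shift by $2$; balancing the loss incurred against the single power of $X^{-1}$ saved from the Diophantine factor is the delicate computation. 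I would also note at the outset that the hypothesis $k\ge4$ is used so that the auxiliary system has degree $k-1\ge3$, which is what places the reduction within the range of the Vinogradov mean-value theory and of the permissibility assumption on $\Del_{s,k-1}$.
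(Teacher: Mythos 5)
The paper offers no proof of this lemma at all: it is imported verbatim as \cite[Theorem 1.1]{BW2012}, so the only ``proof'' to compare against is the citation. Your reconstruction, however, contains a genuine gap at the decisive step. The argument you actually describe --- one Weyl difference, H\"older in $h$, orthogonality to reach $J_{s,k-1}(2X)$, and the Diophantine factor $q^{-1}+X^{-1}+qX^{-k}\ll X^{-1}$ on $\grm_1$ --- is precisely the proof of \cite[Theorem 5.2]{Vau1997} (the route taken in Theorem \ref{theorem11.1} above), and what it delivers is
$$|g_k(\alp;X)|^{2s}\ll X^{2s-1+\Del_{s,k-1}+\eps},$$
that is, the exponent $\sig=(1-\Del_{s,k-1})/(2s)$. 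The passage from this to $|g_k(\alp;X)|^{6s+2}\ll X^{6s-1+\Del_{s,k-1}+\eps}$ is simply asserted (``one is led to an estimate of the shape\dots''), and your closing paragraph explains only in qualitative terms why one ``must'' pass to the $(3s+1)$-th power of $|g_k|^2$; no inequality is exhibited that produces the numerator $3-\Del_{s,k-1}$ or the denominator $6s+2$.

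This is not a cosmetic omission, because the two exponents are genuinely different: $(3-\Del)/(6s+2)>(1-\Del)/(2s)$ exactly when $\Del>1/(2s+1)$, and the entire point of the Boklan--Wooley refinement is that it remains nontrivial for permissible exponents as large as $\Del_{s,k-1}<3$, whereas the Vaughan-type bound you derive is vacuous already for $\Del_{s,k-1}\ge 1$. (It is precisely in this regime of ``smaller exponents'' $s$, with $\Del_{s,k-1}$ of moderate size, that the lemma is applied in Theorem \ref{theorem11.5}.) Note also that the missing saving cannot be recovered by interpolating two valid pointwise bounds, since a mediant $(c+d)/(m+n)$ never exceeds the better of $c/m$ and $d/n$; some additional input --- in \cite{BW2012} a finer analysis of the differenced sums exploiting the minor-arc hypothesis beyond the single factor $X^{-1}$ --- is indispensable, and it is exactly this input that your sketch does not supply. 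As it stands, the proposal proves the lemma only with $\sig(k)=\max_{s\ge k}(1-\Del_{s,k-1})/(2s)$, which is a strictly weaker statement.
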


By making use of the permissible exponents $\Del_{s,k}$ stemming from the discussion following 
(\ref{10.5}) one obtains the following conclusion by means of a na\"ive computer program.

\begin{theorem}\label{theorem11.5}
Suppose that $4\le k\le 20$ and that the positive numbers $\Sig_1(k)$ are defined as in Table $4$. Then 
for each $\eps>0$, one has
$$\sup_{\alp\in \grm_1}|g_k(\alp;X)|\ll X^{1-\sig(k)+\eps},$$
where $\sig(k)=1/\Sig_1(k)$.
\end{theorem}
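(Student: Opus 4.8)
The plan is to deduce the theorem directly from Lemma \ref{Lemma11.4}, whose only hypothesis is the availability of permissible exponents $\Del_{s,k-1}$ for all $s\ge k$. First I would recall that the recipe laid out in the discussion following (\ref{10.5}) supplies, for each degree $k'\ge 3$ and each integer $s\ge 1$, such a permissible exponent $\Del_{s,k'}$: one starts from the exponents $\Del^*_\iota(r,s;\nu)$ delivered by Theorems \ref{theorem1.1} and \ref{theorem9.2} --- together with the vanishing exponent $\Del_{s,k'}=0$ available for $s\ge k'^2-k'+1$ from Corollary \ref{corollary1.2} --- takes the least of these over the admissible parameters $r$ (satisfying $1\le r\le\min\{k'-2,\tfrac12(k'+1)\}$) and over $\iota\in\{0,1\}$, and then interpolates linearly by applying H\"older's inequality to (\ref{2.3}) over all pairs $s_1\le s\le s_2\le k'^2-k'+1$. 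Specialising to $k'=k-1$, which is legitimate precisely when $k-1\ge 3$, i.e.\ $k\ge 4$, produces permissible $\Del_{s,k-1}$ for every $s\ge k$, so the hypothesis of Lemma \ref{Lemma11.4} is met.

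With this family of exponents in hand, Lemma \ref{Lemma11.4} yields
$$\sup_{\alp\in\grm_1}|g_k(\alp;X)|\ll X^{1-\sig(k)+\eps},\qquad
\sig(k)=\max_{s\ge k}\frac{3-\Del_{s,k-1}}{6s+2},$$
and it remains only to evaluate this maximum for each $k$ with $4\le k\le 20$. Since $\Del_{s,k-1}=0$ once $s\ge (k-1)^2-(k-1)+1$, the quantity $(3-\Del_{s,k-1})/(6s+2)$ is strictly decreasing in $s$ beyond that threshold (and in particular stays positive there), so the maximum is attained at some $s$ in the finite range $k\le s\le (k-1)^2-(k-1)+1$. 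For each such $s$ one computes the optimal permissible $\Del_{s,k-1}$ by the two-stage procedure just recalled --- running over the admissible $r$ and over $\iota$ to produce the primitive exponents $\Del^*_\iota(r,s;\nu)$, then interpolating over $s_1$ and $s_2$ --- and then maximises $(3-\Del_{s,k-1})/(6s+2)$ over $s$. Setting $\Sig_1(k)=1/\sig(k)$ and rounding up in the last reported decimal place gives the entries of Table 4.

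I expect the only real work here to be this numerical optimisation --- the ``na\"ive computer program'' of the statement --- which amounts to a one-parameter maximisation over $s$ wrapped around the pre-computed interpolated exponents $\Del_{s,k-1}$; there is no analytic obstacle, since all the substantive harmonic-analytic input has already been encapsulated in Theorems \ref{theorem1.1} and \ref{theorem9.2} and in Lemma \ref{Lemma11.4}. The one point meriting a brief check is that the parameter range $1\le r\le\min\{(k-1)-2,\tfrac12 k\}$ governing the application of those two theorems at degree $k-1$ is non-empty for every $k$ in the tabulated interval, which holds since $(k-1)-2\ge 1$ exactly when $k\ge 4$, matching the hypothesis of the theorem.
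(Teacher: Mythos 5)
Your proposal matches the paper's own (very brief) argument: the paper derives Theorem \ref{theorem11.5} exactly by feeding the permissible exponents $\Del_{s,k-1}$ constructed in the discussion following (\ref{10.5}) into Lemma \ref{Lemma11.4} and carrying out the resulting finite numerical optimisation over $s$ by computer. Your additional observations --- that the maximum is attained in the finite range $k\le s\le (k-1)^2-(k-1)+1$ and that the parameter range for $r$ at degree $k-1$ is non-empty precisely for $k\ge 4$ --- are correct and consistent with what the paper leaves implicit.
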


$$\boxed{\begin{matrix} k&6&7&8&9&10\\
\Sig_1(k)&39.023&58.093&80.867&107.396&137.763\end{matrix}}$$
$$\boxed{\begin{matrix}k&11&12&13&14&15\\
\Sig_1(k)&172.027&210.222&252.370&298.487&348.580\end{matrix}}$$
$$\boxed{\begin{matrix}k&16&17&18&19&20\\
\Sig_1(k)&402.655&460.718&522.771&588.815&658.854\end{matrix}}$$
\vskip.2cm
\begin{center}\text{Table 4: Upper bounds for $\Sig_1(k)$ used in Theorem
 \ref{theorem11.5}.}\end{center}
\vskip.1cm
\noindent 

As we have noted, our estimates supersede the Weyl exponent $\sig(k)=2^{1-k}$ when $k\ge 7$. 
Heath-Brown \cite{HB1988} obtains the estimate
$$\sup_{\alp\in \grm_{3-\eps}}|g_k(\alp;X)|\ll X^{1-\sig(k)+\eps},$$
with $\sig(k)^{-1}=3\cdot 2^{k-3}$, an estimate that is superseded by Theorems \ref{theorem11.1} and 
\ref{theorem11.5} for $k\ge 8$. The work of Robert and Sargos \cite{RS2000} and of Parsell 
\cite{Par2013} yields sharper results subject to more restrictive Diophantine approximation hypotheses. 
These are also superseded by our conclusions for $k\ge 9$, though Parsell's work 
\cite[Theorem 1.2]{Par2013} shows that
$$\sup_{\alp\in \grm_{4-\eps}}|g_8(\alp;X)|\ll X^{1-\sig+\eps},$$
with $\sig^{-1}=80$, a conclusion slightly sharper than that implied by Theorem \ref{theorem11.5}, 
though under more restrictive hypotheses.\par

An asymptotic analysis of the argument establishing Theorem \ref{theorem11.5} shows that its 
conclusion holds in general with $\Sig_1(k)=2k^2-8k+O(1)$. However, for large values of $k$ one may 
derive a sharper bound by applying our earlier work \cite{Woo1995}, which we now recall.

\begin{lemma}\label{lemma11.6}
Let $R$ be an integer with $1\le R\le \tfrac{1}{2}k$, and write $\lam=1-R/k$. Suppose that $s$ and $t$ 
are positive integers with $s\ge \tfrac{1}{2}k(k-1)$, and suppose further that the exponents 
$\Del_{s,k-1}$ and $\Del_{t,k}$ are permissible. Then we have
$$\sup_{\alp_k\in \grm_\lam}|f_k(\bfalp;X)|\ll P^\eps (P^{1-\mu(k)}+P^{1-\nu(k)}),$$
where
$$\mu(k)=\frac{R-\Del_{s,k-1}}{2Rs}\quad \text{and}\quad \nu(k)=\frac{k-R(1+\Del_{t,k})}{2tk}.$$
\end{lemma}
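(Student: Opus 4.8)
The plan is to deduce this as a specialisation of the Weyl-differencing-plus-Vinogradov argument of \cite{Woo1995}: the statement follows by feeding the permissible exponents $\Del_{s,k-1}$ and $\Del_{t,k}$ supplied by the discussion following (\ref{10.5}) into that machinery, so the task is largely bookkeeping. Write $P=X$ throughout. The first step is to extract a useful denominator from the minor-arc hypothesis. Applying Dirichlet's theorem to $\alp_k$ with approximating denominators up to $P^{k-\lam}$ produces coprime integers $a$ and $q$ with $1\le q\le P^{k-\lam}$ and $|q\alp_k-a|\le P^{\lam-k}$; since $\alp_k\in\grm_\lam$, the very definition of $\grm_\lam$ forces $q>P^\lam$, so that $P^\lam<q\le P^{k-\lam}$. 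This is the only place the minor-arc condition is used, and it converts that condition into a two-sided bound on $q$ that drives both error terms below.

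Next comes the differencing. I would difference the Weyl sum $f_k(\bfalp;X)$ a total of $R$ times; for each tuple $\bfh=(h_1,\dots,h_R)$ with $|h_i|<P$ this replaces the degree-$k$ polynomial $\alp_1 x+\dots+\alp_k x^k$ by a polynomial in $x$ of degree $k-R$ whose leading coefficient is $k(k-1)\cdots(k-R+1)\,\alp_k h_1\cdots h_R$, the remaining coefficients involving only $\alp_1,\dots,\alp_{k-1}$ and the $h_i$. The diagonal tuples (some $h_i=0$, or repetitions) and the truncation of the summation interval at each of the $R$ stages contribute an acceptable amount, so one is reduced to bounding an average over $\bfh$ of complete-type sums in $x$. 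From here there are two routes, and since the desired estimate is the better of the two it is harmless to record it as a sum of two contributions.

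For the first route one bounds the inner sum by a Vinogradov mean value of degree $k-1$: H\"older's inequality in $x$ together with the translation--dilation invariance of the underlying system collapses the $\bfh$-average, after raising to the $2s$-th power, into $J_{s,k-1}(2P)$ multiplied by an explicit power of $P$ and by a factor that is $\ll q^{-1}+P^{-1}+qP^{-k}$, hence $\ll P^{-\lam}$ by the range of $q$. Substituting $J_{s,k-1}(2P)\ll P^{2s-\tfrac12 k(k-1)+\Del_{s,k-1}+\eps}$ and balancing the differencing length against $\lam=1-R/k$ yields the exponent $1-\mu(k)$ with $\mu(k)=(R-\Del_{s,k-1})/(2Rs)$. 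For the second route one instead retains all $k$ nontrivial coefficients and estimates the terminal complete exponential sum to modulus $q$ by a Gauss-sum bound, gaining a power of $q$; feeding this through a degree-$k$ mean value and the permissible bound $J_{t,k}(2P)\ll P^{2t-\tfrac12 k(k+1)+\Del_{t,k}+\eps}$, and once more using $q>P^\lam$, produces the exponent $1-\nu(k)$ with $\nu(k)=(k-R(1+\Del_{t,k}))/(2tk)$. The hypotheses $s,t\ge\tfrac12 k(k-1)$ are precisely what guarantee that the differenced systems lie within the range where these mean value estimates apply.

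The main obstacle is the organisation of the $R$-fold differencing: one must check that the diagonal and boundary contributions accrued over the $R$ passes are genuinely subordinate to the principal term, and that the complete sums emerging at the end save the advertised power of $q$ uniformly for $q$ throughout the wide interval $(P^\lam,P^{k-\lam}]$. Once that is in place, the remaining work is the arithmetic of matching the powers of $q$ and $P$ that have been saved to the closed forms for $\mu(k)$ and $\nu(k)$, and verifying that nothing is lost when the two regimes are merged by taking the weaker of the bounds. Since all of this is carried out in \cite{Woo1995}, it suffices here to invoke the relevant lemma of that paper with the improved choices of $\Del_{s,k-1}$ and $\Del_{t,k}$.
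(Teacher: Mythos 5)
Your proposal is correct and ultimately takes the same route as the paper: the paper's entire proof of this lemma is the citation ``This is [Theorem 2]{Woo1995}'', which is precisely the invocation you arrive at in your final paragraph. The preceding sketch of the $R$-fold Weyl differencing and the two mean-value routes is a reasonable account of what lies inside that cited theorem, but it is not needed here since the lemma is stated for arbitrary permissible exponents and so is literally the quoted result.
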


\begin{proof} This is \cite[Theorem 2]{Woo1995}.
\end{proof}

\begin{theorem}\label{theorem11.7}
Suppose that $k$ is a large positive integer. Then for each $\eps>0$, one has
$$\sup_{\alp\in \grm_1}|g_k(\alp;X)|\ll X^{1-\sig(k)+\eps},$$
where
$$\sig(k)^{-1}=2k^2-\frac{1}{\sqrt{3}}k^{3/2}+O(k).$$
\end{theorem}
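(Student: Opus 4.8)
The plan is to deduce Theorem \ref{theorem11.7} from Lemma \ref{lemma11.6} by making an asymptotically optimal choice of the parameters $R$, $s$ and $t$, drawing the permissible exponents $\Del_{s,k-1}$ and $\Del_{t,k}$ from Theorem \ref{theorem1.1}. First I would record the elementary reduction. Since $g_k(\alp;X)=f_k((0,\ldots ,0,\alp);X)$, and since the containment $\grm_1\subseteq \grm_\lam$ holds whenever $0<\lam\le 1$ (if $\alp\in \grm_1$ and $(a,q)=1$ with $|q\alp-a|\le P^{\lam-k}\le P^{1-k}$, then $q>P\ge P^\lam$), it follows that $\sup_{\alp\in \grm_1}|g_k(\alp;X)|\le \sup_{\alp_k\in \grm_\lam}|f_k(\bfalp;X)|$. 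Applying Lemma \ref{lemma11.6} with $\lam=1-R/k$ (and $P=X$) therefore gives $\sup_{\alp\in \grm_1}|g_k(\alp;X)|\ll X^\eps(X^{1-\mu(k)}+X^{1-\nu(k)})\ll X^{1-\sig(k)+\eps}$ with $\sig(k)=\min\{\mu(k),\nu(k)\}$, and the task reduces to maximising $\min\{\mu(k),\nu(k)\}$ over admissible $R$, $s$ and $t$.

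For the optimisation I would search among parameters of size $k^{1/2}$. Write $R=\lfloor \alp k^{1/2}\rfloor$, and take $s$ and $t$ to be the thresholds appearing in Theorem \ref{theorem1.1} in degrees $k-1$ and $k$ respectively, corresponding to auxiliary parameters $r'=\lfloor ck^{1/2}\rfloor$ and $r''=\lfloor c'k^{1/2}\rfloor$. From the explicit formula $\Del_{s,k,r}=r(r-1)(3k-2r-5)/(6(s-k+1))$ and the threshold $s=k^2-rk+\tfrac12 r(r+3)-1$, a direct expansion shows that with these choices one has $\Del_{s,k-1}=\tfrac12 c^2+O(k^{-1/2})$, $\Del_{t,k}=\tfrac12 c'^2+O(k^{-1/2})$, $s=k^2-ck^{3/2}+O(k)$ and $t=k^2-c'k^{3/2}+O(k)$; one checks along the way that the hypotheses of Theorem \ref{theorem1.1} and of Lemma \ref{lemma11.6} (in particular $1\le R\le \tfrac12 k$, the constraints on $r'$ and $r''$, and $s,t\ge \tfrac12 k(k-1)$) are all met once $k$ is large. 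Substituting these estimates into the definitions $\mu(k)=(R-\Del_{s,k-1})/(2Rs)$ and $\nu(k)=(k-R(1+\Del_{t,k}))/(2tk)$ and expanding $(1-x)^{-1}=1+x+O(x^2)$ with $x=O(k^{-1/2})$ yields
$$\mu(k)^{-1}=2k^2+\Bigl(\frac{c^2}{\alp}-2c\Bigr)k^{3/2}+O(k),\qquad
\nu(k)^{-1}=2k^2+\bigl(2\alp+\alp c'^2-2c'\bigr)k^{3/2}+O(k).$$

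It remains to choose $\alp$, $c$ and $c'$ so as to minimise $\sig(k)^{-1}=\max\{\mu(k)^{-1},\nu(k)^{-1}\}$. Minimising $c^2/\alp-2c$ over $c$ gives $c=\alp$ with value $-\alp$, while minimising $2\alp+\alp c'^2-2c'$ over $c'$ gives $c'=1/\alp$ with value $2\alp-1/\alp$; since $-\alp$ is decreasing and $2\alp-1/\alp$ is increasing in $\alp$, balancing $-\alp=2\alp-1/\alp$ yields $\alp=1/\sqrt 3$, at which both coefficients equal $-1/\sqrt 3$. Thus with the integral choices $R=\lfloor k^{1/2}/\sqrt 3\rfloor$, $r'=\lfloor (k/3)^{1/2}\rfloor$, $r''=\lfloor (3k)^{1/2}\rfloor$ (so that $\Del_{s,k-1}=\tfrac16+O(k^{-1/2})$ and $\Del_{t,k}=\tfrac32+O(k^{-1/2})$) one obtains $\sig(k)^{-1}=2k^2-\tfrac{1}{\sqrt 3}k^{3/2}+O(k)$, which is the assertion of the theorem. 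The only genuinely delicate point is the optimisation itself: one must first spot the correct scaling $R,r',r''\asymp k^{1/2}$ and then solve the resulting two-variable extremal problem, after which the verification that the error terms are truly $O(k)$ (rather than concealing a contribution of order $k^{3/2}$) is a routine but slightly fiddly asymptotic expansion of the quotients defining $\mu(k)$ and $\nu(k)$.
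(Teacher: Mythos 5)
Your proposal is correct and follows essentially the same route as the paper: both apply Lemma \ref{lemma11.6} with $R$, and the auxiliary parameters determining $s$ and $t$ via Theorem \ref{theorem1.1}, all of order $\sqrt{k}$, expand $\mu(k)^{-1}$ and $\nu(k)^{-1}$ to the $k^{3/2}$ term, and balance the two coefficients to arrive at $\tet=\phi=1/\sqrt{3}$, $\psi=\sqrt{3}$ (your $\alp=c=1/\sqrt{3}$, $c'=\sqrt{3}$). Your asymptotic expansions and the resulting optimisation agree exactly with the paper's.
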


\begin{proof} We apply Lemma \ref{lemma11.6} with
$$s=(k-1)^2-r(k-1)+\tfrac{1}{2}r(r+3)-1$$
and
$$t=k^2-uk+\tfrac{1}{2}u(u+3)-1,$$
with the permissible exponents $\Del_{s,k-1}$ and $\Del_{t,k}$ determined via Theorem 
\ref{theorem1.1}. With a little experimentation, one finds that the optimal choices of the parameters $r$, 
$u$ and $R$ in the application of Lemma \ref{lemma11.6} are all of order $\sqrt k$. We therefore put 
$R=\lfloor \tet \sqrt{k}\rfloor$, $r=\lfloor \phi \sqrt{k}\rfloor$ and $u=\lfloor \psi\sqrt{k}\rfloor$, with 
$\tet$, $\phi$ and $\psi$ positive parameters to be chosen in due course. One finds from Theorem 
\ref{theorem1.1} that one has permissible exponents $\Del_{s,k-1}$ and $\Del_{t,k}$, with
$$\Del_{s,k-1}=\tfrac{1}{2}\phi^2+O(k^{-1/2})\quad \text{and}\quad 
\Del_{t,k}=\tfrac{1}{2}\psi^2+O(k^{-1/2}).$$
In addition, one has
$$s=k^2(1-\phi k^{-1/2}+O(1/k))\quad \text{and}\quad t=k^2(1-\psi k^{-1/2}+O(1/k)).$$
Thus, in our application of Lemma \ref{lemma11.6}, we obtain
\begin{align*}
2k^2\mu(k)&=(1-\tfrac{1}{2}\phi^2\tet^{-1}k^{-1/2})(1+\phi k^{-1/2})+O(1/k)\\
&=1+\tfrac{1}{2}(2\phi-\phi^2\tet^{-1})k^{-1/2}+O(1/k),
\end{align*}
and
\begin{align*}
2k^2\nu(k)&=(1-(1+\tfrac{1}{2}\psi^2)\tet k^{-1/2})(1+\psi k^{-1/2})+O(1/k)\\
&=1+\tfrac{1}{2}(2\psi-(2+\psi^2)\tet )k^{-1/2}+O(1/k).
\end{align*}
A rapid optimisation reveals that we should take $\phi=\tet$ and $\psi=\tet^{-1}$ in order to optimise 
these two expressions, and then the optimal choice for $\tet$ is determined by the equation 
$\tet=\tet^{-1}-2\tet$. Thus we deduce that one should take $\tet=1/\sqrt{3}$, $\phi=1/\sqrt{3}$ and 
$\psi=\sqrt{3}$, delivering the exponents
$$\mu(k)^{-1}=2k^2(1-1/(2\sqrt{3k})+O(1/k))$$
and
$$\nu(k)^{-1}=2k^2(1-1/(2\sqrt{3k})+O(1/k)).$$
The conclusion of the theorem now follows at once from Lemma \ref{lemma11.6}.
\end{proof}

It would appear that the exponents provided by means of Lemma \ref{lemma11.6} do not supersede 
those provided by Theorem \ref{theorem11.5}, by reference to Table 4, in the range $6\le k\le 20$.

\section{Further applications} We turn next to Tarry's problem. When $h$, $k$ and $s$ are positive 
integers with $h\ge 2$, consider the Diophantine system
\begin{equation}\label{11.1}
\sum_{i=1}^sx_{i1}^j=\sum_{i=1}^sx_{i2}^j=\ldots =\sum_{i=1}^sx_{ih}^j\quad (1\le j\le k).
\end{equation}
Let $W(k,h)$ denote the least natural number $s$ having the property that the simultaneous equations 
(\ref{11.1}) possess an integral solution $\bfx$ with
$$\sum_{i=1}^sx_{iu}^{k+1}\ne \sum_{i=1}^sx_{iv}^{k+1}\quad (1\le u<v\le h).$$

\begin{theorem}\label{theorem12.1} When $h$ and $k$ are natural numbers with $h\ge 2$ and 
$k\ge 3$, one has $W(k,h)\le \tfrac{5}{8}(k+1)^2$.\end{theorem}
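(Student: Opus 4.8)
The plan is to deduce Theorem~\ref{theorem12.1} from the mean value estimates of Theorems~\ref{theorem1.1} and~\ref{theorem9.2}, by way of the standard reduction of Tarry's problem to Vinogradov's mean value theorem in degree $k+1$ (as used, for instance, in the treatment of Tarry's problem accompanying \cite[Theorem 1.7]{Woo2012a}). Put $s=\lfloor\tfrac58(k+1)^2\rfloor$; since the defining property of $W(k,h)$ is preserved under appending constant columns, it suffices to exhibit, for every $h\ge2$ and all large $X$, a solution of~\eqref{11.1} with $1\le x_{iu}\le X$ in which the $(k+1)$th power sums $\sum_{i=1}^sx_{iu}^{k+1}$ $(1\le u\le h)$ are pairwise distinct. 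The case $h=2$ is transparent: writing $r(\mathbf n)$ for the number of $\bfx\in[1,X]^s$ with $\sum_ix_i^j=n_j$ $(1\le j\le k)$, the number of solutions of~\eqref{11.1} is $\sum_{\mathbf n}r(\mathbf n)^2=J_{s,k}(X)$, of which $J_{s,k+1}(X)$ are ``bad'', so a good solution exists as soon as $J_{s,k}(X)>J_{s,k+1}(X)$ for large $X$, which in view of~\eqref{1.3} holds whenever $J_{s,k+1}(X)\ll X^{2s-\frac12(k+1)(k+2)+\Delta+\eps}$ with $\Delta$ below the margin $\tfrac12(k+1)(k+2)-\tfrac12k(k+1)=k+1$. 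For $h\ge2$ one argues similarly, bounding the number of solutions of~\eqref{11.1} in which some pair $\bfx_u,\bfx_v$ additionally agree in their $(k+1)$th power sums by $\binom h2$ times $J_{s,k+1}(X)\bigl(\max_{\mathbf n}r(\mathbf n)\bigr)^{h-2}$, and comparing this with the moment $\sum_{\mathbf n}r(\mathbf n)^h$; the elementary fibre bound $\max_{\mathbf n}r(\mathbf n)\ll_kX^{s-k}$ (fix $s-k$ coordinates freely and note the remaining $k$ are determined up to a bounded ambiguity by their power sums), together with the translation--dilation invariance recorded in \S2, shows that the same numerical requirement on $\Delta$ suffices uniformly in $h$.

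The remaining task is to furnish such an estimate for $J_{s,k+1}(X)$ with $s=\lfloor\tfrac58(k+1)^2\rfloor$. Here the relevant observation is that $\tfrac58(k+1)^2$ is exactly the endpoint of the ``near-optimal'' interval $s>\tfrac58(k+1)^2$ for degree $k+1$ isolated in the introduction. I would invoke Theorem~\ref{theorem9.2} (or Theorem~\ref{theorem1.1}) in degree $K=k+1$ with the congruencing parameter $r$ taken as large as the constraint $r\le\min\{K-2,\tfrac12(K+1)\}$ permits, so that the resulting permissible exponent is
$$\Delta=\delta_{s,K,r}=\frac{r(r-1)\bigl(3K-2r-5\bigr)}{6(s-K+1)}=\tfrac{2}{15}k+O(1)<k+1,$$
comfortably within the admissible range. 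For the finitely many small $k$ for which the admissible $r$ is too small for Theorems~\ref{theorem1.1} and~\ref{theorem9.2} to bite at this value of $s$, one falls back on the estimates of Ford and Wooley \cite{FW2013}, which supply a permissible exponent $\Delta<k+1$ throughout $s\ge\tfrac58(k+1)^2$ in that range; in either case the property holds for $s=\lfloor\tfrac58(k+1)^2\rfloor$, whence $W(k,h)\le\tfrac58(k+1)^2$.

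The main obstacle, to my mind, lies precisely in the passage from $h=2$ to general $h$: one must control the number of solutions of~\eqref{11.1} possessing a coincident pair of $(k+1)$th power sums against the full moment $\sum_{\mathbf n}r(\mathbf n)^h$, rather than merely against $J_{s,k}(X)=\sum_{\mathbf n}r(\mathbf n)^2$, which brings the maximal fibre size and the distribution of the $(k+1)$th power sum within a given fibre into play. These inputs are available from the circle method and from the translation--dilation invariance already in hand, but marshalling them so that the iteration is genuinely independent of $h$ is where the real work resides; once that is done, the numerical bookkeeping above closes the argument.
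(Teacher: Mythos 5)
Your overall strategy is the paper's: reduce the theorem to the existence of a permissible exponent $\Del_{s,k+1}<k+1$ at $s\approx\tfrac58(k+1)^2$, and then extract such an exponent, of size $\tfrac2{15}k+O(1)$, from Theorem \ref{theorem1.1} applied in degree $k+1$ with $r=\lfloor\tfrac12(k+1)\rfloor$. That second half is exactly what the paper does. The difference is that the paper obtains the reduction by citing the argument of the proof of \cite[Theorem 1.3]{Woo2012a}, whereas you attempt to reprove it, and the route you propose for general $h$ does not close.

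Your $h=2$ argument is fine, but the passage to $h\ge 3$ fails numerically as stated. Writing $N\ll X^{\frac12k(k+1)}$ for the number of occupied fibres, the best available lower bound for the total count is $\sum_{\mathbf n}r(\mathbf n)^h\ge\bigl(\sum_{\mathbf n}r(\mathbf n)\bigr)^hN^{1-h}\gg X^{sh-\frac12k(k+1)(h-1)}$ (and this is essentially sharp). Comparing this with your bad-count bound $\binom h2 J_{s,k+1}(X)\bigl(\max_{\mathbf n}r(\mathbf n)\bigr)^{h-2}$ and inserting $\max_{\mathbf n}r(\mathbf n)\ll X^{s-k}$ leads to the requirement
$$\Del<\tfrac12(k+1)(k+2)+k(h-2)-\tfrac12k(k+1)(h-1),$$
which equals $k+1$ when $h=2$ but is already negative for $h=3$ and $k\ge4$: the elementary fibre bound $X^{s-k}$ is hopelessly far from the relevant scale $X^{s-\frac12k(k+1)+o(1)}$, and no appeal to translation--dilation invariance repairs this. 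The argument behind the paper's citation avoids counting $h$-fold solutions altogether. Let $\rho(\mathbf n,m)$ count the $\bfx\in[1,X]^s$ with degree-$\le k$ power sums $\mathbf n$ and $(k+1)$-st power sum $m$, so that $\sum_{\mathbf n,m}\rho(\mathbf n,m)^2=J_{s,k+1}(X)$. It suffices to find one fibre $\mathbf n$ on which at least $h$ distinct values $m$ occur, since $h$ representatives with pairwise distinct $(k+1)$-st power sums then solve (\ref{11.1}) nontrivially. If no such fibre exists, Cauchy--Schwarz within each fibre gives $r(\mathbf n)^2\le(h-1)\sum_m\rho(\mathbf n,m)^2$, whence $J_{s,k}(X)\le(h-1)J_{s,k+1}(X)$; this contradicts $J_{s,k}(X)\gg X^{2s-\frac12k(k+1)}$ together with $J_{s,k+1}(X)\ll X^{2s-\frac12(k+1)(k+2)+\Del+\eps}$ as soon as $\Del<k+1$, uniformly in $h$ up to the harmless constant $h-1$. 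With this replacement your numerical endgame goes through exactly as in the paper.
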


\begin{proof} The argument of the proof of \cite[Theorem 1.3]{Woo2012a} shows that $W(k,h)\le s$ 
whenever one can establish the existence of a permissible exponent $\Del_{s,k+1}$ with 
$\Del_{s,k+1}<k+1$. But by taking $r=\lfloor \tfrac{1}{2}(k+1)\rfloor$ in Theorem \ref{theorem1.1}, 
one finds that whenever $s\ge \tfrac{5}{8}(k+1)^2$, one has $\Del_{s,k+1}<\tfrac{2}{15}k$. The 
conclusion of the theorem follows immediately.
\end{proof}

In \cite[Theorem 11.4]{Woo2013}, we obtained the weaker bound
$$W(k,h)\le k^2-\sqrt{2}k^{3/2}+4k.$$
It is plain that there is plenty of room to spare in the above proof of Theorem \ref{theorem12.1}. 
This is a topic we intend to pursue elsewhere.\par

We note also that on writing
$$\grS(s,k)=\sum_{q=1}^\infty 
\underset{(a_1,\ldots ,a_k,q)=1}{\sum_{a_1=1}^q\dots \sum_{a_k=1}^q}
\Bigl|q^{-1}\sum_{r=1}^qe((a_1r+\ldots +a_kr^k)/q)\Bigr|^{2s}$$
and
$$\calJ(s,k)=\int_{\dbR^k}\Bigl| \int_0^1e(\bet_1\gam+\ldots +\bet_k\gam^k)\d\gam 
\Bigr|^{2s}\d\bfbet ,$$
the method of proof of \cite[Theorem 1.2]{Woo2012a} may be modified in the light of Corollary 
\ref{corollary1.2} to obtain the following conclusion.

\begin{theorem}\label{theorem12.2}
Suppose that $k\ge 3$ and $s\ge k^2-k+2$. Then one has the asymptotic formula
$$J_{s,k}(X)\sim \grS(s,k)\calJ(s,k)X^{2s-\frac{1}{2}k(k+1)}.$$
\end{theorem}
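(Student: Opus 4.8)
The plan is to follow the template of the proof of \cite[Theorem 1.2]{Woo2012a}, the only new ingredient being the sharpened mean value estimate of Corollary \ref{corollary1.2}, which is precisely what permits the lower bound $s\ge k^2-k+2$ in place of the more restrictive hypothesis available when \cite{Woo2012a} was written. Recalling from (\ref{2.3}) that $J_{s,k}(X)=\oint |f_k(\bfalp;X)|^{2s}\d\bfalp$, with $f_k$ as in (\ref{2.2}), I would begin with a Hardy--Littlewood dissection of $[0,1)^k$ into a set of major arcs $\grM$---the union of boxes centred at rationals $\bfa/q$ with $q$ at most a small power of $X$ and $|q\alpha_j-a_j|$ at most $X^{-j}$ times that same power---together with the complementary minor arcs $\grm=[0,1)^k\setminus \grM$.

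The minor arc treatment is where the new estimate enters, and it is short. On $\grm$ one has a Weyl-type bound $\sup_{\bfalp\in\grm}|f_k(\bfalp;X)|\ll X^{1-\sig}$ for some fixed $\sig=\sig(k)>0$, obtained by applying Weyl's inequality to the leading coefficient and pruning away the lower-order coefficients in the usual way (see \cite[Chapter 2]{Vau1997}). Writing $2s=2(k^2-k+1)+2\tau$ with $\tau=s-(k^2-k+1)\ge 1$, one has
$$\int_\grm |f_k(\bfalp;X)|^{2s}\d\bfalp\le \Bigl(\sup_{\bfalp\in\grm}|f_k(\bfalp;X)|\Bigr)^{2\tau}\oint |f_k(\bfalp;X)|^{2(k^2-k+1)}\d\bfalp\ll X^{2\tau(1-\sig)}J_{k^2-k+1,k}(X),$$
and Corollary \ref{corollary1.2} furnishes $J_{k^2-k+1,k}(X)\ll X^{2(k^2-k+1)-\frac{1}{2}k(k+1)+\eps}$. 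Hence the minor arc integral is
$$\ll X^{2s-\frac{1}{2}k(k+1)-2\tau\sig+\eps},$$
which, since $\tau\ge 1$, is $O\bigl(X^{2s-\frac{1}{2}k(k+1)-\sig}\bigr)$ on choosing $\eps<\sig$. Thus the minor arcs contribute only an admissible error to $J_{s,k}(X)$.

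The major arc analysis is entirely classical and follows \cite{Woo2012a} and \cite[Chapter 4]{Vau1997}: on $\grM$ one approximates $f_k(\bfalp;X)$ by $q^{-1}S(q,\bfa)v(\bfbet)$, where $S(q,\bfa)=\sum_{r=1}^q e((a_1r+\cdots+a_kr^k)/q)$ and $v(\bfbet)=\int_0^X e(\bet_1t+\cdots+\bet_kt^k)\d t$, with the approximation error negligible after raising to the power $2s$ and integrating; completing the sum over $q$ and $\bfa$ to the singular series $\grS(s,k)$ and extending the $\bfbet$-integral to $\dbR^k$ (followed by the substitution $t=X\gam$) to produce $X^{2s-\frac{1}{2}k(k+1)}\calJ(s,k)$, one obtains
$$\int_\grM |f_k(\bfalp;X)|^{2s}\d\bfalp=\grS(s,k)\calJ(s,k)X^{2s-\frac{1}{2}k(k+1)}+o\bigl(X^{2s-\frac{1}{2}k(k+1)}\bigr).$$
Here one checks absolute convergence of $\grS(s,k)$ from the estimate $q^{-1}|S(q,\bfa)|\ll q^{-1/k+\eps}$, and of $\calJ(s,k)$ from $v(\bfbet)\ll X(1+X^k|\bet_k|)^{-1/k}$ together with its analogues in the other coefficients; both hold comfortably once $s\ge k^2-k+2$. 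Adding the major and minor arc contributions yields the claimed asymptotic formula. I do not foresee a genuine obstacle: the major arc computation is standard, the minor arc pruning is routine, and the single substantive step is the invocation of Corollary \ref{corollary1.2}, whose strength is exactly what allows all but two of the $2s$ variables to be absorbed into the mean value while leaving enough copies of $f_k$ for the pointwise Weyl bound to bite.
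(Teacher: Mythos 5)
Your proposal is correct and is essentially the argument the paper intends: the paper gives no details beyond stating that the method of proof of \cite[Theorem 1.2]{Woo2012a} is to be modified in the light of Corollary \ref{corollary1.2}, and your write-up is precisely that modification (classical major arcs, minor arcs handled by combining a pointwise Weyl-type bound on $2\tau\ge 2$ copies of $f_k$ with the mean value $J_{k^2-k+1,k}(X)\ll X^{2(k^2-k+1)-\frac{1}{2}k(k+1)+\eps}$ from Corollary \ref{corollary1.2}). The only point to tidy in a full write-up is the standard caveat that the pointwise minor-arc bound via \cite[Theorem 5.2]{Vau1997} applies when some $\alp_j$ with $j\ge 2$ is badly approximable, the residual case being absorbed into the major-arc pruning as in \cite[\S 9]{Woo2012a}.
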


In \cite[\S11]{Woo2013}, such a conclusion was obtained for $s\ge k^2$. A similar improvement holds 
also for work on the asymptotic formula in the Hilbert-Kamke problem.\par

Finally, write
$$F_k(\bfbet;X)=\sum_{1\le x\le X}e(\bet_kx^k+\bet_{k-2}x^{k-2}+\ldots +\bet_1x).$$
L.-K. Hua \cite{Hua1965} investigated the problem of bounding the least integer $C_k$ such that, 
whenever $s\ge C_k$, one has
$$\oint |f_k(\bfalp;X)|^s\d\bfalp \ll X^{s-\frac{1}{2}k(k+1)+\eps},$$
and likewise the least integer $S_k$ such that, whenever $s\ge S_k$, one has
\begin{equation}\label{12.2a}
\oint |F_k(\bfbet;X)|^s\d\bfbet \ll X^{s-\frac{1}{2}(k^2-k+2)+\eps}.
\end{equation}
We are able to reduce the upper bounds for $C_k$ and $S_k$ provided by Hua \cite{Hua1965}, and also 
the subsequent improved bounds given in our earlier work \cite{Woo2012a, Woo2013}.

\begin{theorem}\label{theorem12.3} When $k\ge 3$, one has $C_k\le 2k^2-2k+2$. Meanwhile, one has
$$S_4\le 22,\quad S_5\le 34,\quad S_6\le 52,\quad S_7\le 66,\quad S_8\le 88$$
and $S_k\le 2k^2-6k+6$ for $k\ge 9$.
\end{theorem}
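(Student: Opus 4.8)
The plan is to obtain both families of bounds by inserting the new mean value estimates into the classical Hua-type framework, following the templates of \cite{Woo2012a, Woo2013}. For the bound $C_k\le 2k^2-2k+2$ the argument is short. For even moments, orthogonality gives $\oint|f_k(\bfalp;X)|^{2s}\d\bfalp=J_{s,k}(X)$, so Corollary \ref{corollary1.2} yields $\oint|f_k(\bfalp;X)|^{2s}\d\bfalp\ll X^{2s-\frac12k(k+1)+\eps}$ as soon as $s\ge k^2-k+1$. Since $\abs{f_k(\bfalp;X)}\le X$, it follows for every integer $u\ge 2k^2-2k+2=2(k^2-k+1)$ that $\oint|f_k(\bfalp;X)|^u\d\bfalp\le X\oint|f_k(\bfalp;X)|^{u-1}\d\bfalp\ll X^{u-\frac12k(k+1)+\eps}$, using the even case at $u$ or at $u-1$ according to the parity of $u$; real $u$ follow by H\"older interpolation between consecutive integers. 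This is precisely the assertion $C_k\le 2k^2-2k+2$.

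For the bounds on $S_k$ I would combine two estimates. The first is the elementary relation
$$\oint|F_k(\bfbet;X)|^{2s}\d\bfbet\ll X^{k-1}J_{s,k}(X),$$
proved by writing the Diophantine system underlying the left side---that of (\ref{1.1}) but with the degree-$(k-1)$ equation deleted---as a disjoint union over the $O(X^{k-1})$ admissible integer values $m$ of $\sum_{i}(x_i^{k-1}-y_i^{k-1})$, and observing that the number of solutions for each fixed $m$ equals $\oint|f_k(\bfalp;X)|^{2s}e(-m\alp_{k-1})\d\bfalp$, which is at most $J_{s,k}(X)$ because $|f_k(\bfalp;X)|^{2s}\ge 0$. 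In conjunction with the permissible exponents $\Del_{s,k}$ assembled after (\ref{10.5}) from Theorems \ref{theorem1.1} and \ref{theorem9.2}, this gives $\oint|F_k(\bfbet;X)|^{2s}\d\bfbet\ll X^{2s-\frac12(k^2-k+2)+\Del_{s,k}+\eps}$, hence the desired estimate whenever $\Del_{s,k}=0$, that is, for $s\ge k^2-k+1$, so that $S_k\le 2k^2-2k+2$. The second ingredient exploits the gap of $F_k$ at $x^{k-1}$: a single Weyl-differencing step replaces $|F_k(\bfbet;X)|^2$ by $O(X)$ plus an average over $h$ of exponential sums $\sum_xe(P_h(x))$ in which $P_h$ has degree $k-1$ and $x^{k-1}$-coefficient proportional to $h\bet_k$, thereby filling the gap; after removing the single linear dependence between the two leading coefficients of $P_h$ (by a translation $x\mapsto x\pm h/2$, splitting $h$ into residue classes modulo $2$ if need be) and absorbing the attendant change-of-variable Jacobians, which contribute only harmless powers of $h$, one bounds $\oint|F_k(\bfbet;X)|^{2s}\d\bfbet$ in terms of averages of lower-degree mean values, themselves controlled by the Vinogradov integral $J_{t,k-1}(X)$ via a further application of the disjoint-union relation above. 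Feeding in the clean bound $J_{t,k-1}(X)\ll X^{2t-\frac12k(k-1)+\eps}$, valid for $t\ge (k-1)^2-(k-1)+1=k^2-3k+3$ by Corollary \ref{corollary1.2} applied in degree $k-1$, leads to the asymptotic bound $S_k\le 2(k^2-3k+3)=2k^2-6k+6$ for $k\ge 9$. For $4\le k\le 8$ the binding constraint is different, and one instead optimizes numerically over all admissible H\"older interpolations between the two routes above, Hua's lemma, and the permissible exponents $\Del_{t,k}$ and $\Del_{t,k-1}$, producing the tabulated values; odd moments of $F_k$ are disposed of, just as for $C_k$, via the bound $\abs{F_k(\bfbet;X)}\le X$.

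The main obstacle is the bookkeeping in the Weyl-differencing route for $F_k$: one has to track the linear relation between the $x^{k-1}$- and $x^{k-2}$-coefficients of the differenced polynomial, reduce cleanly to a genuine lower-degree Vinogradov mean value, and then carry through the optimization of the resulting interpolation exponents---a routine but lengthy calculation, best delegated to a machine for the explicit range $4\le k\le 8$.
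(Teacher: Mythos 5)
Your treatment of $C_k$ is correct and coincides with the paper's (which simply cites Corollary \ref{corollary1.2}, (\ref{2.3}) and orthogonality). The $S_k$ part, however, has a genuine gap. The paper's proof rests on the two-term inequality
$$\oint |F_k(\bfbet;X)|^{2t}\d\bfbet \ll X^{k-2+\eps}J_{t,k}(2X)+X^{\eps-1}J_{t,k-1}(2X),$$
imported from \cite[equation (10.10)]{Woo2012a}, and the whole point is the factor $X^{k-2+\eps}$ on the degree-$k$ term: it means one only needs $\Del_{t,k}\le 1$ (not $\Del_{t,k}=0$) to reach the target exponent $2t-\tfrac12(k^2-k+2)$. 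Your ``elementary relation'' carries the factor $X^{k-1}$ (the number of admissible values of $m$), which is correct as far as it goes but forces $\Del_{s,k}=0$, i.e.\ $s\ge k^2-k+1$, and hence only $S_k\le 2k^2-2k+2$. Passing from $X^{k-1}$ to $X^{k-2+\eps}$ at the cost of the extra term $X^{\eps-1}J_{t,k-1}$ is precisely the nontrivial content of the cited inequality, and your Weyl-differencing sketch does not reconstruct it: in particular, for $k\ge 9$ you claim the bound $S_k\le 2k^2-6k+6$ follows from the single condition $t\ge k^2-3k+3$ needed to control $J_{t,k-1}$, with no accounting of the degree-$k$ mean value that unavoidably survives (one cannot reduce $\oint|F_k|^{2t}$ entirely to degree-$(k-1)$ data, since $F_k$ has a genuine $x^k$ term). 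In the paper this surviving term is handled by showing $\Del_{t,k}<1$ for $t\ge k^2-5k+19$ via Theorem \ref{theorem1.1} with $r=5$ --- a step entirely absent from your argument.

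The same omission explains why your plan cannot produce the tabulated values for $4\le k\le 8$. Those values are exactly $2t^*(k)$, where $t^*(k)$ is the least $t$ for which the exponent $\Del_{t,k}=1$ is permissible ($t^*(4)=11,\ldots,t^*(8)=44$); note $t^*(k)$ exceeds $k^2-3k+3$ throughout this range, so the binding constraint is the degree-$k$ term with its $X^{k-2}$ prefactor, not the degree-$(k-1)$ term. No H\"older interpolation among the ingredients you list (your $X^{k-1}J_{s,k}$ relation, Hua's lemma, and a purely degree-$(k-1)$ reduction) recovers the condition ``$\Del_{t,k}\le 1$ suffices,'' because none of them contains a term of the shape $X^{k-2}J_{t,k}$. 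To repair the argument you should either prove the displayed inequality above (following \cite[\S10]{Woo2012a}) or cite it, and then supply the permissible exponents $\Del_{t,k}\le 1$ at $t=t^*(k)$ for $4\le k\le 8$ and $\Del_{t,k}<1$ for $t\ge k^2-5k+19$ when $k\ge 9$.
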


\begin{proof} The bound on $C_k$ is immediate from Corollary \ref{corollary1.2} via (\ref{2.3}) and 
orthogonality. In order to establish the bound on $S_k$, we begin by observing that 
\cite[equation (10.10)]{Woo2012a} supplies the estimate
\begin{equation}\label{12.2}
\oint |F_k(\bfbet;X)|^{2t}\d\bfbet \ll X^{k-2+\eps}J_{t,k}(2X)+X^{\eps-1}J_{t,k-1}(2X).
\end{equation}
It follows from Corollary \ref{corollary1.2} that when $t\ge k^2-3k+3$, one has
\begin{equation}\label{12.3}
J_{t,k-1}(2X)\ll X^{2t-\frac{1}{2}k(k-1)+\eps}.
\end{equation}
In addition, explicit computations of the type described following (\ref{10.5}) show that the exponent 
$\Del_{t,k}=1$ is permissible whenever $t\ge t^*(k)$ and $4\le k\le 8$, where
$$t^*(4)=11,\quad t^*(5)=17,\quad t^*(6)=26,\quad t^*(7)=33,\quad t^*(8)=44.$$
For these exponents, therefore, when $t\ge t^*(k)$, one has the upper bound
\begin{equation}\label{12.4}
J_{t,k}(2X)\ll X^{2t-\frac{1}{2}k(k+1)+1+\eps}.
\end{equation}
By substituting (\ref{12.3}) and (\ref{12.4}) into (\ref{12.2}), we obtain the desired conclusion 
(\ref{12.2a}) with $s=2t$ for $4\le k\le 8$.\par

When $k\ge 9$, we instead apply Theorem \ref{theorem1.1} with $r=5$ to show that whenever
$t\ge k^2-5k+19$, the permissible exponent $\Del_{t,k}$ is permissible, where
$$\Del_{t,k}=\frac{10(k-5)}{k^2-6k+20}<1.$$
When $k\ge 9$ and $t\ge k^2-3k+3$, therefore, we again have the estimates (\ref{12.3}) and 
(\ref{12.4}), and the estimate (\ref{12.2a}) with $s=2t$ follows just as before. This completes the proof
 of the theorem.
\end{proof}

For comparison, in \cite[Theorem 11.6]{Woo2013} we derived the somewhat weaker bounds 
$C_k\le 2k^2-2$ and $S_k\le 2k^2-2k$.

\bibliographystyle{amsbracket}
\providecommand{\bysame}{\leavevmode\hbox to3em{\hrulefill}\thinspace}

\end{document}